\documentclass[leqno, 12pt]{article}
\usepackage{amsmath,amsfonts,amsthm,amssymb,indentfirst}
\usepackage{xy} \xyoption{all}
\usepackage[pagebackref,colorlinks]{hyperref}
\hypersetup{linkcolor=blue, urlcolor=blue, citecolor=red}
\usepackage[capitalise]{cleveref}

\setlength{\textwidth}{6.5in} \setlength{\textheight}{9in}
\setlength{\evensidemargin}{0in} \setlength{\oddsidemargin}{0in}
\setlength{\topmargin}{-.6in}

\newtheorem{theorem}{Theorem}
\newtheorem{lemma}[theorem]{Lemma}
\newtheorem{definition}[theorem]{Definition}
\newtheorem{corollary}[theorem]{Corollary}
\newtheorem{proposition}[theorem]{Proposition}

\theoremstyle{definition}
\newtheorem{example}[theorem]{Example}

\newcommand{\im}{\mathrm{Im}}
\newcommand{\dom}{\mathrm{Dom}}
\newcommand{\pth}{\mathrm{Path}}
\newcommand{\clpth}{\mathrm{ClPath}}
\newcommand{\so}{\mathbf{s}}
\newcommand{\ra}{\mathbf{r}}

\newcommand{\I}{\mathcal{I}}
\newcommand{\J}{\mathcal{J}}
\newcommand{\B}{\mathcal{S}}
\newcommand{\SR}{\mathcal{O}}
\newcommand{\T}{\mathcal{T}}
\newcommand{\PT}{\mathcal{PT}}

\newcommand{\Z}{\mathbb{Z}}

\newcommand{\N}{\mathbb{N}}
\newcommand{\M}{\mathbb{M}}

\begin{document}

\title{Conjugacy and Least Commutative Congruences in Semigroups}

\author{Zachary Mesyan}

\maketitle

\begin{abstract}
Given a semigroup $S$ and $s,t \in S$, write $s \sim_p^1 t$ if $s=pr$ and $t=rp$, for some $p,r \in S \cup \{1\}$. This relation, known as ``primary conjugacy", along with its transitive closure $\sim_p$, has been extensively used and studied in many fields of algebra. This paper is devoted to a natural generalization, defined by $s \sim_s^1 t$ whenever $s=p_1\cdots p_{n}$ and $t=p_{f(1)}\cdots p_{f(n)}$, for some $p_1, \dots, p_n \in S \cup \{1\}$ and permutation $f$ of $\{1, \dots, n\}$, together with its transitive closure $\sim_s$. The relation $\sim_s$ is the congruence generated by either $\sim_p^1$ or $\sim_p$, and is moreover the least commutative congruence on any semigroup. We explore general properties of $\sim_s$, discuss it in the context of groups and rings, compare it to other semigroup conjugacy relations, and fully describe its equivalence classes in free, Rees matrix, graph inverse, and various transformation semigroups.

\medskip

\noindent
\emph{Keywords:} commutative congruence, conjugacy, transformation semigroup, Rees matrix semigroup, graph inverse semigroup, topological semigroup

\noindent
\emph{2020 MSC numbers:} 20M10, 
08A30 (primary), 
20M15, 
20M20, 
05C20 (secondary) 
\end{abstract}

\section{Introduction} \label{intro}

Given elements $s$ and $t$ in an algebraic structure $S$ (i.e., a set with a binary operation), write $s \sim_p^1 t$ if $s=pr$ and $t=rp$, for some $p,r \in S \cup \{1\}$. This relation, and its transitive closure $\sim_p$, have been used repeatedly in all corners of algebra, and beyond. For example, if $S$ is a group, then $\sim_p^1 \, = \, \sim_p$ is simply the familiar conjugacy relation. For this reason $\sim_p$ has been extensively studied in the literature on semigroups, as a possible generalization of conjugation to that context, where it is known as the \emph{primary conjugacy} relation. Various other generalizations have been proposed (see~\cite{ABKKMM,AKKM1} and the references therein), but this one is perhaps the most well-known. (See~\cite{AKKM1,KM2} for overviews of the history of $\sim_p$ in semigroups, which goes back as far as at least the 1950s.) In the $C^*$-algebra literature $\sim_p^1$ is known as the \emph{Murray--von Neumann equivalence}, and is used on projections in the process of constructing $K_0$-groups~\cite[Chapters 2 and 3]{RLL}. In symbolic dynamics $\sim_p^1$ and $\sim_p$ are known as the \emph{elementary shift equivalence} and \emph{strong shift equivalence}, respectively, and are used on certain integer-valued matrices in order to describe conjugate edge shifts~\cite[Section 7.2]{LM}. In ring theory $\sim_p$ is closely tied to commutators and trace maps~\cite{MV}, and has been studied as a measure of commutativity~\cite{AbL,AlL}. 

It seems apparent that the wide-ranging usefulness of the relations $\sim_p^1$ and $\sim_p$ can be typically attributed to their ability to measure and force commutativity in an algebraic structure. With that in mind, it is natural to study a more general variation on the same idea. Specifically, for elements $s$ and $t$ in an algebraic structure $S$ write $s \sim_s^1 t$ if $s=p_1\cdots p_{n}$ and $t=p_{f(1)}\cdots p_{f(n)}$, for some $p_1, \dots, p_n \in S \cup \{1\}$ and permutation $f$ of $\{1, \dots, n\}$, and let $\sim_s$ denote the transitive closure of $\sim_s^1$. The relations $\sim_s^1$ and $\sim_s$ have indeed been studied in the context of rings in~\cite{AlL,LN}, while a similar relation on semigroups is discussed in~\cite{CCT}. Our goal here is to describe the precise relationship between $\sim_p$ and $\sim_s$, and then to explore $\sim_s$ (and $\sim_s^1$) in the general setting of semigroups.

It turns out that $\sim_s$ is nothing more than the semigroup congruence generated by either $\sim_p^1$ or $\sim_p$ (but not smaller relations--Proposition~\ref{free-small-gen}), and is moreover the least congruence that produces a commutative quotient semigroup (Theorem~\ref{least-comm}). Additionally, just as $\sim_p$ reduces to the usual conjugacy in any group, $\sim_s$ has a natural interpretation in groups as well. Specifically, given elements $s$ and $t$ in a group $G$, we have $s \sim_s t$ if and only if $st^{-1}$ belongs to the commutator subgroup $[G,G]$ of $G$ (Corollary~\ref{group-sym}). So one might argue that $\sim_s$ completes the information about commutativity captured by $\sim_p$. Moreover, while it does not quite generalize conjugacy in groups, it does share various properties with $\sim_p$ and other conjugacy relations proposed for semigroups (Lemmas~\ref{Leroy-lemma2} and~\ref{inv-semi-lem}).

In order to get a sense for how $\sim_s$ can behave, most of the paper is devoted to describing $\sim_s$-equivalence classes in various standard types of semigroups, and comparing them to equivalence classes under $\sim_p$ and other existing conjugacy relations. In some cases $\sim_s$ can be computed rather quickly from previously known results (such as descriptions of $\sim_p$) and the fact that it is the least commutative congruence, but in other cases describing the relation can be quite challenging. In particular, we completely classify $\sim_s$-equivalence classes in free semigroups (Proposition~\ref{free-semi-prop}), Rees matrix semigroups (Corollaries~\ref{rees-0-sym} and~\ref{rees-no-0-sym}), graph inverse semigroups (Theorem~\ref{graph-semi-sim}), full transformation monoids, partial transformation monoids, symmetric inverse monoids (Proposition~\ref{full-transf-sim}), and injective function monoids (Theorem~\ref{inj-sym-thrm}). We also give a partial description of $\sim_s$ in the monoid of all surjective functions on a set (Theorem~\ref{surj-sym}). Along the way, we completely classify $\sim_p^1$- and $\sim_p$-equivalence classes in some semigroups where they had not been previously described in full generality, namely Rees matrix semigroups (Theorem~\ref{rees-prim}) and injective function monoids (Theorem~\ref{equiv-rel}).

Finally, in an appendix, we attempt to explain precisely the special nature of $\sim_p$ in semigroup rings. Just as $\sim_s$ relates exactly the elements conflated by homomorphisms with largest possible commutative images, $\sim_p$ relates exactly the elements conflated by certain trace maps. More specifically, we show that for any two elements $s$ and $t$ in a semigroup, we have $s\sim_p t$ if and only if $f(s)=f(t)$ for any minimal trace map $f$ on the corresponding semigroup ring (Proposition~\ref{tr-prim}).

\section{Conjugacy Definitions and Basics} \label{def-sect}

In this section we define the various conjugacy relations on semigroups that will be used, and explain other bits of commonly occurring notation, for convenience of reference.

We denote the set of all integers by $\Z$, the set of positive integers by $\Z^+$, and the set of natural numbers (including $0$) by $\N$. For a set $\Omega$, we denote the cardinality of $\Omega$ by $|\Omega|$. Given a semigroup $S$ we denote by $S^1$ the monoid resulting from adjoining an identity element $1$ to $S$. If $S$ is itself a monoid, we understand $S^1$ to refer to $S$.

\begin{definition}\label{sim-def1}
Let $S$ be a semigroup, and $s,t \in S$.  Write $s \sim_p^1 t$ if there exist $p, r \in S^1$ such that 
\[s=pr, \ rp=t.\]
Let $\sim_p$ denote the transitive closure of the relation $\sim_p^1$. That is, $s \sim_p t$ if there exist $p_1, r_1, p_2, r_2, \dots, p_n, r_n \in S^1$ such that \[s=p_1r_1, \ r_1p_1=p_2r_2, \ r_2p_2=p_3r_3, \dots, \ r_{n-1}p_{n-1}=p_nr_n, \ r_np_n=t.\]
The relation $\sim_p^1$, and, by extension $\sim_p$, is called the \emph{primary conjugacy} relation.
\end{definition}

We should note that while many different notations have been used for the primary conjugacy in the literature, in~\cite{ABG,ABKKMM,AKKM1,AKKM2,AKM,K} the symbol $\sim_p$ is used to denote what we are calling $\sim_p^1$ above, while $\sim_p^*$ is used for the transitive version. We have chosen our scheme, however, since the latter relation is of more central interest here, since, as far as adornment goes, ``$1$" is more descriptive than ``$*$", and since $\sim_p^n$ can be used with positive integers other than $1$ as values of $n$, to denote the number of transitions (see~\cite{AbL,AlL,LN}). 

Next we define natural variations of $\sim_p^1$ and $\sim_p$. They were initially inspired by analogous relations on rings introduced by Leroy and Nasernejad in~\cite[Definitions 3.1]{LN}. A similar, but stronger, relation is studied in~\cite{CCT}.

\begin{definition} \label{symm-conj-def}
Let $S$ be a semigroup, and $s,t \in S$. Write $s \sim_s^1 t$ if there exist $n \in \Z^+$, $p_1, \dots, p_n \in S^1$, and $f \in \B(\{1, \dots, n\})$, the symmetric group on $\, \{1, \dots, n\}$, such that \[s=p_1\cdots p_{n}, \ p_{f(1)}\cdots p_{f(n)}=t.\]
We denote by $\sim_s$ the transitive closure of the relation $\sim_s^1$. We refer to $\sim_s$ as the \emph{symmetric} or \emph{permutation} $($\emph{conjugacy}$)$ relation. 
\end{definition}

Clearly, in any semigroup, $\sim_p^1 \, \subseteq \, \sim_s^1 \, \subseteq \, \sim_s$ and $\sim_p^1 \, \subseteq \,\sim_p \, \subseteq \, \sim_s$. We show below, however, that there are semigroups where $\sim_p \, \neq \, \sim_s$, $\sim_p^1 \, \neq \, \sim_s^1$, $\sim_s^1 \, \not\subseteq \, \sim_p$ (Example~\ref{group-eg} or Proposition~\ref{free-semi-prop}), and $\sim_s^1 \, \neq \, \sim_s$, $\sim_p \, \not\subseteq \, \sim_s^1$ (Example~\ref{rees-eg}).

Let us next recall other relations that have been proposed as suitable notions of conjugacy for semigroups, which we shall compare to $\sim_s^1$ and $\sim_s$ in various parts of the paper. We mention only equivalence relations (as opposed to arbitrary relations) that apply to all semigroups (and not just special classes of them, such as inverse semigroups and epigroups). See~\cite{ABKKMM,AKKM1} for overviews of the history and literature pertaining to these and other, more specialized, semigroup conjugacy relations, along with comparisons of their properties.

\begin{definition} \label{sim-def2}
Let $S$ be a semigroup, and $s,t \in S$. 

Write $s \sim_o t$ if there exist $p,r \in S^1$ such that 
\[sp = pt, \ rs=tr. \]

Write $s \sim_n t$ if there exist $p,r \in S^1$ such that 
\[sp=pt, \  rs=tr, \ rsp=t, \ ptr=s.\]

Write $s \sim_w t$ if there exist $p,r \in S^1$ and $m \in \Z^+$ such that 
\[sp=pt, \  rs=tr, \ pr=s^m, \ rp=t^m.\]

Write $s \sim_c t$ if there exist $p \in \mathbb{P}(s)$ and $r \in \mathbb{P}(t)$ such that 
\[sp = pt, \ rs=tr,\]
where for each $s \in S \setminus \{0\}$, $\mathbb{P}(s) = \{p \in S^1 \mid \forall r \in S^1 \ (rs \neq 0 \implies rsp \neq 0)\}$.
\end{definition}

The relations on semigroups given in Definitions~\ref{sim-def1} and~\ref{sim-def2} mostly arose from attempts to translate the equation defining conjugacy in groups, or \emph{group-conjugacy}, namely $s = ptp^{-1}$, to semigroups, and then possibly compensate for any resulting deficiencies. (Specifically, $\sim_p^1$ is generally not transitive, but $\sim_p$ is; $\sim_o$ is universal in any semigroup with zero, but $\sim_c$ is generally not; $\sim_n$ is a stronger version of $\sim_o$ that reduces to $s = ptp^{-1}$ and $t=p^{-1}sp$ in any inverse semigroup; $\sim_w$ is a weaker version of $\sim_p$ that was first defined on certain matrices, in the context of symbolic dynamics, where it is known as \emph{shift equivalence}~\cite[Section 7.3]{LM}.) In contrast to this approach, as we shall see in Corollary~\ref{img-cor}, $\sim_s$ can be viewed as translating to semigroups certain functional, rather than equational, aspects of group-conjugacy.

Here is a summary of the relationships between the relations in Definitions~\ref{sim-def1} and~\ref{sim-def2}.

\begin{proposition}[Proposition 2.3 in~\cite{K}, Section 1 in~\cite{AKKM1}, Proposition 3.1 in~\cite{ABKKMM}] \label{conj-compar}
In any semigroup, $\sim_n \, \subseteq \, \sim_p \, \subseteq \, \sim_w \, \subseteq \, \sim_o$ and $\sim_n \, \subseteq \, \sim_c \, \subseteq \, \sim_o$, but $\sim_p$ and $\sim_c$ may not be comparable. Moreover, there are semigroups where $\sim_c \, \neq \, \sim_o$.
\end{proposition}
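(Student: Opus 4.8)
The statement collects several containments together with two separation claims, and I would establish them in that order. The two containments into $\sim_o$ are immediate from the definitions: both $\sim_w$ and $\sim_c$ demand the pair of equations $sp=pt$ and $rs=tr$ that already defines $\sim_o$. The relation $\sim_w$ merely imposes the extra conditions $pr=s^m$ and $rp=t^m$, while $\sim_c$ merely restricts the witnesses $p,r$ to lie in the subsets $\mathbb{P}(s),\mathbb{P}(t) \subseteq S^1$; so $\sim_w \subseteq \sim_o$ and $\sim_c \subseteq \sim_o$ require no work.

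Next, for $\sim_n \subseteq \sim_p$ I would produce a single primary step. Given $sp=pt$, $rs=tr$, $rsp=t$, and $ptr=s$, write $t=(rs)p$; the $\sim_p^1$-partner of $t$ through this factorization is $p(rs)=p(tr)=ptr=s$, where the middle equality uses $rs=tr$ and the last uses $ptr=s$. Hence $t \sim_p^1 s$, so $s \sim_p t$. For $\sim_n \subseteq \sim_c$ I would show that the same witnesses $p,r$ serve: one checks $p \in \mathbb{P}(s)$ because $usp=upt$ (from $sp=pt$) and $us=uptr$ (from $s=ptr$), so $us \neq 0$ forces $uptr \neq 0$ and therefore $upt=usp \neq 0$; symmetrically $r \in \mathbb{P}(t)$, using $t=rsp$ and $tr=rs$. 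With $sp=pt$ and $rs=tr$ in hand, this gives $s \sim_c t$.

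For $\sim_p \subseteq \sim_w$ I would first verify $\sim_p^1 \subseteq \sim_w$: if $s=pr$ and $t=rp$, then taking $m=1$ and the same $p,r$ yields $sp=prp=pt$, $rs=rpr=tr$, $pr=s$, and $rp=t$, so $s \sim_w t$. Passing from $\sim_p^1$ to its transitive closure $\sim_p$ then requires that $\sim_w$ be transitive; this is the one genuinely nontrivial point among the containments, and I would invoke the standard fact (transitivity of shift equivalence, established in the cited works) that $\sim_w$ is an equivalence relation, so that it absorbs the transitive closure of any subrelation contained in it.

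The main obstacle is not the containments but the separation claims. To see that $\sim_p$ and $\sim_c$ need not be comparable, and that $\sim_c$ can differ from $\sim_o$, I would exhibit explicit finite semigroups with zero --- the natural source of asymmetry, since in a semigroup without zero the defining condition on $\mathbb{P}(s)$ is vacuous and $\sim_c$ collapses to $\sim_o$. Concretely, one example must contain elements with $s \sim_p t$ but $s \not\sim_c t$, another elements with $s \sim_c t$ but $s \not\sim_p t$, and a third elements with $s \sim_o t$ but $s \not\sim_c t$ (the last forcing $\sim_c \neq \sim_o$). These witnessing semigroups are precisely the content carried by the citations (Proposition 3.1 of~\cite{ABKKMM} and the discussions in~\cite{AKKM1,K}), and I would simply reproduce them, verifying the relevant membership and non-membership in each by finite inspection of the multiplication table.
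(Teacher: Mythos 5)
Your proposal is correct, but the comparison is lopsided in an unusual way: the paper offers no proof of this proposition at all --- it is recalled from the literature, with everything deferred to the three cited sources. Your reconstruction is therefore strictly more self-contained than the paper. The individual containment arguments check out. Your one-step derivation of $\sim_n \, \subseteq \, \sim_p^1$ (factor $t = (rs)p$ and note $p(rs) = p(tr) = ptr = s$) is exactly what the paper alludes to in the remark immediately following the proposition, namely that the proof in the cited source ``actually shows that $\sim_n \, \subseteq \, \sim_p^1$.'' Your verification that the $\sim_n$-witnesses themselves satisfy $p \in \mathbb{P}(s)$ and $r \in \mathbb{P}(t)$ is the right argument for $\sim_n \, \subseteq \, \sim_c$, and the $m=1$ witness for $\sim_p^1 \, \subseteq \, \sim_w$ is correct. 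The only substantive debt you leave outstanding is the transitivity of $\sim_w$, which you invoke from the literature. Since this is the single nontrivial ingredient in the whole chain of containments, it would be better to include it, and it is short: if $s \sim_w t$ via $(p_1, r_1, m_1)$ and $t \sim_w u$ via $(p_2, r_2, m_2)$, then $(p_1p_2,\, r_2r_1,\, m_1+m_2)$ witnesses $s \sim_w u$. Indeed, the intertwining relations give $s^k p_1 = p_1 t^k$ and $r_2 t^k = u^k r_2$ for all $k \geq 1$, whence
\[
(p_1p_2)(r_2r_1) = p_1 t^{m_2} r_1 = s^{m_2} p_1 r_1 = s^{m_1+m_2}, \qquad
(r_2r_1)(p_1p_2) = r_2 t^{m_1} p_2 = u^{m_1} r_2 p_2 = u^{m_1+m_2},
\]
while $s(p_1p_2) = p_1tp_2 = (p_1p_2)u$ and $(r_2r_1)s = r_2tr_1 = u(r_2r_1)$ follow directly.

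As for the separation claims, deferring the witnessing semigroups to the citations is precisely what the paper itself does, so nothing is lost there; and your observation that any such example must involve a zero element (since $\mathbb{P}(s) = S^1$, hence $\sim_c \, = \, \sim_o$, in a semigroup without zero) correctly locates where they must come from. One small technical point worth adding to your $\sim_n \, \subseteq \, \sim_c$ argument: the paper defines $\mathbb{P}(s)$ only for $s \neq 0$, so you should note separately that $s \sim_n t$ with $s = 0$ or $t = 0$ forces $s = t = 0$ (immediate from $s = ptr$ and $t = rsp$), a pair related under $\sim_c$ by the standard convention that the $\sim_c$-class of $0$ is $\{0\}$.
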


We note that the proof of~\cite[Proposition 2.3]{K} actually shows that $\sim_n \, \subseteq \, \sim_p^1$ (this follows quickly from the definitions). We give a complete description of how the various relations defined above interact with each other in Section~\ref{rel-comp-sect}.

\section{Symmetric Relation and Commutative Congruences} \label{sym-sect}

We begin by describing the precise relationship between $\sim_p$ and $\sim_s$, and characterizing the latter. The following result is straight-forward, but will be fundamental to everything that follows.

Recall that given a semigroup $S$, an equivalence relation $\rho \subseteq S \times S$ is a \emph{congruence} if $s \rho t$ implies that $(sr) \rho (tr)$ and $(rs) \rho (rt)$ for all $r,s,t \in S$. 

\begin{theorem} \label{least-comm}
Let $S$ be a semigroup, and let $\, \approx$ denote any of $\sim_p^1$, $\sim_p$, $\sim_s^1$. Then $\sim_s$ is the congruence generated by $\, \approx$, and it is the least congruence $\rho$ on $S$ such that $S/\rho$ is commutative.
\end{theorem}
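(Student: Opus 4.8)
The plan is to prove the two claims together by establishing a chain of equalities and inclusions among the relevant congruences. Let me denote by $\langle \approx \rangle$ the congruence generated by $\approx$ (the smallest congruence containing $\approx$), and let $\kappa$ denote the least congruence such that $S/\kappa$ is commutative. I first want to show that $\sim_s$ is itself a congruence, then that it equals each $\langle \approx \rangle$, and finally that it equals $\kappa$.

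\textbf{Step 1: $\sim_s$ is a congruence.} Since $\sim_s$ is by definition the transitive closure of $\sim_s^1$, it is automatically transitive, and reflexivity and symmetry of $\sim_s^1$ (hence of $\sim_s$) are immediate from the definition (take $n=1$, or invert the permutation $f$). The key point is compatibility with multiplication. I would verify this first for $\sim_s^1$ and then observe it passes to the transitive closure: if $s = p_1 \cdots p_n$ and $t = p_{f(1)} \cdots p_{f(n)}$, then for any $r \in S$ we have $sr = p_1 \cdots p_n r$ and $tr = p_{f(1)} \cdots p_{f(n)} r$; appending $r$ as a new factor $p_{n+1}$ and extending $f$ to fix $n+1$ shows $sr \sim_s^1 tr$, and symmetrically for left multiplication by prepending $r$ as a factor fixed by the permutation. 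Transitive closure of a left- and right-compatible reflexive symmetric relation is again a congruence, so $\sim_s$ is a congruence.

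\textbf{Step 2: $S/{\sim_s}$ is commutative, and $\sim_s = \langle \approx \rangle$.} For commutativity of the quotient, note that for any $a,b \in S$ we have $ab \sim_s^1 ba$ directly from the definition (take $p_1 = a$, $p_2 = b$, and $f$ the transposition), so $\overline{ab} = \overline{ba}$ in $S/{\sim_s}$. For the generation claim, the inclusions $\approx \, \subseteq \, \sim_s$ noted in the excerpt give $\langle \approx \rangle \subseteq \sim_s$ since $\sim_s$ is a congruence by Step 1. For the reverse inclusion, I would show $\sim_s^1 \, \subseteq \, \langle \sim_p^1 \rangle$, which suffices because $\sim_p^1 \subseteq \sim_s^1 \subseteq \sim_p$ forces all three choices of $\approx$ to generate the same congruence once this is established. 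The mechanism is that any permutation $f$ is a product of adjacent transpositions, and applying a single adjacent transposition to a product $p_1 \cdots p_n$ swaps two neighboring factors $p_i p_{i+1} \mapsto p_{i+1} p_i$; this local swap is an instance of $\sim_p^1$ applied to the factor $p_i p_{i+1}$ (with the surrounding factors treated via congruence compatibility), so the whole rearrangement lies in $\langle \sim_p^1 \rangle$. Hence $\sim_s \subseteq \langle \sim_p^1 \rangle \subseteq \langle \approx \rangle$, giving equality.

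\textbf{Step 3: $\sim_s = \kappa$.} Since $S/{\sim_s}$ is commutative (Step 2), $\kappa \subseteq \sim_s$ by minimality of $\kappa$. Conversely, if $\rho$ is any congruence with $S/\rho$ commutative, then for all $a,b$ we have $(ab)\,\rho\,(ba)$, so $\rho$ contains all such pairs; since $\rho$ is a congruence, it contains the congruence they generate, which is exactly $\langle \sim_p^1 \rangle$ once one checks the generating pairs $ab \sim_p^1 ba$ (again via $p=a$, $r=b$). Thus $\sim_s = \langle \sim_p^1 \rangle \subseteq \rho$ for every such $\rho$, and in particular $\sim_s \subseteq \kappa$. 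Combined with the previous inclusion this gives $\sim_s = \kappa$.

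I expect the main obstacle to be Step 2, specifically the reverse inclusion $\sim_s^1 \subseteq \langle \sim_p^1 \rangle$. The subtlety is handling the surrounding factors correctly when a single adjacent transposition is applied: one must argue that $p_1 \cdots p_{i-1}(p_i p_{i+1}) p_{i+2} \cdots p_n$ and $p_1 \cdots p_{i-1}(p_{i+1} p_i) p_{i+2} \cdots p_n$ are related by $\langle \sim_p^1 \rangle$, which follows from $p_i p_{i+1} \sim_p^1 p_{i+1} p_i$ together with closure of the congruence under left and right multiplication by the fixed prefix and suffix. Composing these single-swap relations along a decomposition of $f$ into adjacent transpositions, using transitivity of the congruence, then yields the claim. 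Everything else is routine verification from the definitions.
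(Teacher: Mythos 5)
Your proposal is correct in substance, but it takes a genuinely different route from the paper's. The paper proves the least-commutative-congruence claim first and then gets the generation claim essentially for free: it observes that any congruence $\rho$ containing $\sim_p^1$ has commutative quotient (since $st \sim_p^1 ts$ for all $s,t$), and, in the other direction, that any congruence $\rho$ with commutative quotient contains all of $\sim_s^1$ outright, because
\[[p_1 \cdots p_n]_{\rho} = [p_1]_{\rho} \cdots [p_n]_{\rho} = [p_{f(1)}]_{\rho}\cdots [p_{f(n)}]_{\rho} = [p_{f(1)}\cdots p_{f(n)}]_{\rho}.\]
Commutativity of the quotient absorbs an arbitrary permutation of the factors in one stroke, so no decomposition into transpositions is ever needed; the identity $\sim_s \, = \langle \sim_p^1\rangle = \langle\sim_p\rangle = \langle\sim_s^1\rangle$ then follows in a final paragraph. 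You instead make the combinatorial statement $\sim_s^1 \, \subseteq \, \langle \sim_p^1 \rangle$ (adjacent transpositions, each swap an instance of $\sim_p^1$, surrounding factors handled by congruence compatibility) the technical core, prove generation first, and derive minimality from it. Your transposition argument is exactly the mechanism the paper defers to Proposition~\ref{Leroy-lemma} (following Leroy--Nasernejad) for a different purpose; it is correct, including the care needed with factors equal to $1$ and empty prefixes/suffixes. What the paper's order of argument buys is brevity; what yours buys is an explicit, quotient-free demonstration that $\sim_s$ is reachable from $\sim_p^1$ by elementary moves.

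One misstatement does need repair: the chain ``$\sim_p^1 \subseteq \sim_s^1 \subseteq \sim_p$'' invoked in Step 2 is false --- the second inclusion fails in general, as the paper itself records ($\sim_s^1 \, \not\subseteq \, \sim_p$ in noncommutative free semigroups, Proposition~\ref{free-semi-prop}, and in symmetric groups, Example~\ref{group-eg}). Fortunately your squeeze does not need it. The inclusions you actually use are $\sim_p^1 \, \subseteq \, \sim_p \, \subseteq \, \sim_s$ and $\sim_p^1 \, \subseteq \, \sim_s^1 \, \subseteq \, \sim_s$, all true: together with $\sim_s \, \subseteq \, \langle \sim_p^1 \rangle$ and monotonicity of congruence generation they give, for each choice of $\approx$,
\[\sim_s \, = \, \langle \sim_p^1 \rangle \, \subseteq \, \langle \approx \rangle \, \subseteq \, \sim_s,\]
which is the equality you want. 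With that justification substituted, the proof goes through.
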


\begin{proof}
Suppose that $s \sim_s^1 t$ for some $s,t \in S$, and write $s=p_1\cdots p_{n}$, $t=p_{f(1)}\cdots p_{f(n)}$ for some $n \in \Z^+$, $p_1, \dots, p_n \in S^1$, and $f \in \B(\{1, \dots, n\})$. Also let $r=p_{n+1} \in S$. Then $sr = p_1 \cdots p_np_{n+1}$ and $tr = p_{f(1)}\cdots p_{f(n)}p_{n+1}$, which implies that $sr \sim_s^1 tr$. 

Now suppose that $s \sim_s t$ for some $s,t \in S$. Then there exist $q_1, \dots, q_m \in S$ such that 
\[s = q_1 \sim_s^1 q_2 \sim_s^1 \cdots \sim_s^1 q_m = t.\]
From the previous paragraph it follows that $sr \sim_s tr$ for all $r \in S$. Analogously, if $s \sim_s t$, then $rs \sim_s rt$ for all $r \in S$. Since $\sim_s$ is clearly an equivalence relation, we conclude that it is a congruence.

Let $\rho$ be any congruence on $S$ such that $\sim_p^1 \, \subseteq \rho$, and let us denote the $\rho$-congruence class of each $s \in S$ by $[s]_{\rho}$. Then for all $s,t \in S$ we have $st \sim_p^1 ts$, and hence 
\[[s]_{\rho}[t]_{\rho} = [st]_{\rho} = [ts]_{\rho} = [t]_{\rho}[s]_{\rho}\]
(see, e.g.,~\cite[Theorem 1.5.2]{H}). Therefore the quotient semigroup $S/\rho$ is commutative. In particular, $S/\sim_s$ must be commutative, since $\sim_p^1 \, \subseteq \, \sim_s$.

Next suppose that $\rho$ is a congruence on $S$ such that $S/\rho$ is commutative, and again denote by $[s]_{\rho}$ the $\rho$-congruence class of $s \in S$. Then for all $n \in \Z^+$, $p_1, \dots, p_n \in S^1$, and $f \in \B(\{1, \dots, n\})$ we have 
\[[p_1 \cdots p_n]_{\rho} = [p_1]_{\rho} \cdots [p_n]_{\rho} = [p_{f(1)}]_{\rho}\cdots [p_{f(n)}]_{\rho} = [p_{f(1)}\cdots p_{f(n)}]_{\rho};\] 
i.e., $(p_1 \cdots p_n) \rho (p_{f(1)}\cdots p_{f(n)})$. Therefore $\sim_s^1 \, \subseteq \rho$, and since $\rho$ is transitive, it follows that $\sim_s \, \subseteq \rho$. Hence $\sim_s$ is the least congruence on $S$ that produces a commutative quotient semigroup. 

Finally, let $\rho_1$, $\rho_2$, and $\rho_3$ denote the congruences on $S$ generated by $\sim_p^1$, $\sim_p$, and $\sim_s^1$, respectively. Since $\sim_p^1 \, \subseteq \rho_i$, an earlier computation shows that $S/\rho_i$ is commutative, for each $i$. Therefore, by the previous paragraph, $\sim_s \, \subseteq \rho_i$ for each $i$. But since $\sim_p^1, \sim_p, \sim_s^1 \, \subseteq \, \sim_s$, and $\sim_s$ is a congruence, we conclude that $\sim_s \, = \rho_1 = \rho_2 = \rho_3$.
\end{proof}

We shall show in Proposition~\ref{free-small-gen}, that a relation smaller than $\sim_p^1$ typically does not generate a commutative congruence. So, in particular, we could not have included $\sim_n$ in the list of possible values of $\approx$ in the previous result (see Proposition~\ref{conj-compar}). We shall also show, in Example~\ref{incomp-eg}, that $\sim_s$ is generally not comparable to $\sim_o$, $\sim_w$, and $\sim_c$.

See~\cite[Proposition 4.2]{CCT} for a characterization of the least congruence that results in a cancellative commutative semigroup, and~\cite[Theorem 2.6]{P} for a characterization of the least commutative congruence on an inverse semigroup. Both of these relations are somewhat cumbersome to describe, and so we shall not do that here.

The next corollary is a restatement of Theorem~\ref{least-comm} in the context of homomorphisms. It also shows that $\sim_s$ performs (to a more complete extent than $\sim_p$) a certain function of the usual conjugacy in groups, namely relating the elements that must be conflated by any homomorphism with a commutative image.

\begin{corollary} \label{img-cor}
The following are equivalent for any homomorphism $f: S \to T$ of semigroups.
\begin{enumerate}
\item[$(1)$] The image $f(S)$ of $f$ in $T$ is commutative.
\item[$(2)$] For all $s,t \in S$, $s \approx t$ implies that $f(s) = f(t)$, where $\, \approx$ is any of $\sim_p^1$, $\sim_p$, $\sim_s^1$, $\sim_s$.
\end{enumerate}
If $S$ and $T$ are groups, then these are also equivalent to the following.
\begin{enumerate}
\item[$(3)$] For all group-conjugate $s,t \in S$ we have $f(s) = f(t)$.
\item[$(4)$] $[S,S] \subseteq \ker (f)$, where $\, [S,S]$ is the subgroup of $S$ generated by its multiplicative commutators $sts^{-1}t^{-1}$.
\end{enumerate}
\end{corollary}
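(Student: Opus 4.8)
The plan is to handle the general equivalence $(1)\Leftrightarrow(2)$ first, drawing entirely on Theorem~\ref{least-comm}, and then to append the two group-specific conditions. Throughout I would write $\ker f$ for the congruence $\{(s,t)\in S\times S : f(s)=f(t)\}$, so that $S/\ker f \cong f(S)$ as semigroups.

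For $(1)\Rightarrow(2)$, suppose $f(S)$ is commutative. Then $S/\ker f$ is commutative, so Theorem~\ref{least-comm} gives $\sim_s\,\subseteq\,\ker f$; that is, $s\sim_s t$ implies $f(s)=f(t)$. Since $\sim_p^1,\,\sim_p,\,\sim_s^1$ are all contained in $\sim_s$, the implication in $(2)$ holds for all four relations at once. For the converse it suffices to verify $(2)$ in its weakest form, with $\approx\,=\,\sim_p^1$: for any $s,t\in S$ we have $st\sim_p^1 ts$ (take $p=s$, $r=t$), so $(2)$ forces $f(s)f(t)=f(st)=f(ts)=f(t)f(s)$, whence $f(S)$ is commutative. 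Because $\sim_p^1$ is the smallest and $\sim_s$ the largest of the four relations, the version of $(2)$ for either intermediate relation is squeezed between these two, so all four versions are equivalent to $(1)$ simultaneously.

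For the group case I would first note that a semigroup homomorphism between groups automatically sends $1_S$ to $1_T$ (the unique idempotent of $T$) and inverses to inverses, so $f$ is a group homomorphism and $\ker f=\{s\in S:f(s)=1_T\}$ is a normal subgroup; this makes the kernel in $(4)$ meaningful. The equivalence $(2)\Leftrightarrow(3)$ is then just the observation that in a group $\sim_p^1$ coincides with ordinary group-conjugacy (if $s=pr$ then $rsr^{-1}=rp$, and conversely $t=qsq^{-1}$ arises from $r=q$, $p=sq^{-1}$), so condition $(3)$ is precisely $(2)$ read off for $\approx\,=\,\sim_p^1$. Finally, for $(1)\Leftrightarrow(4)$, observe that $f(S)$ is commutative iff $f(s)f(t)=f(t)f(s)$ for all $s,t$, i.e.\ $f(sts^{-1}t^{-1})=1_T$, i.e.\ every generating commutator of $[S,S]$ lies in $\ker f$; since $\ker f$ is a subgroup, this is equivalent to $[S,S]\subseteq\ker f$.

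I do not expect a serious obstacle once Theorem~\ref{least-comm} is available; the argument is essentially bookkeeping. The one point requiring care is the phrase ``$\approx$ is any of'' the four relations in $(2)$: one must confirm the equivalence holds uniformly, which is why the proof establishes $(1)\Rightarrow(2)$ for the \emph{largest} relation $\sim_s$ but $(2)\Rightarrow(1)$ for the \emph{smallest} relation $\sim_p^1$, and then appeals to the containment chain to cover the two intermediate cases. A secondary point worth stating explicitly is that a semigroup homomorphism of groups is a group homomorphism, so that $\ker f$ in $(4)$ denotes the usual normal-subgroup kernel.
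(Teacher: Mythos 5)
Your proof is correct and takes essentially the same route as the paper: both rest on Theorem~\ref{least-comm} for the equivalence of (1) and (2), and both settle the group conditions via the identification of $\sim_p^1$ with group-conjugacy together with the commutator computation $f(sts^{-1}t^{-1})=1_T \iff f(s)f(t)=f(t)f(s)$. The only organizational difference is that the paper invokes the clause of Theorem~\ref{least-comm} stating that $\sim_s$ is the congruence generated by each of the four relations to dispatch all versions of (2) uniformly, whereas you squeeze the intermediate relations between $\sim_s$ (giving $(1)\Rightarrow(2)$) and $\sim_p^1$ (giving $(2)\Rightarrow(1)$ by the direct computation $f(s)f(t)=f(st)=f(ts)=f(t)f(s)$); both are sound, and your explicit remark that a semigroup homomorphism between groups is automatically a group homomorphism makes precise a fact the paper uses only tacitly.
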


\begin{proof}
By Theorem~\ref{least-comm}, (1) is equivalent to $\sim_s$ being contained in the kernel of $f$. (See~\cite[Theorem 1.5.2]{H} for more details.) Since the kernel of $f$ is a congruence on $S$, for any relation $\approx$ on $S$, being contained in the kernel of $f$ is equivalent to the congruence generated by $\approx$ being contained in the kernel of $f$. Hence, again by Theorem~\ref{least-comm}, (1) is equivalent to $\approx$ being contained in the kernel of $f$, where $\approx \ \in \{\sim_p^1, \sim_p,\sim_s^1, \sim_s\}$, which is precisely what (2) says.

Let us now assume that $S$ and $T$ are groups. Then (3) is a special case of (2), since group-conjugacy coincides with $\sim_p^1$ in any group. Next, for all $s,t \in S$ we have
\[f(sts^{-1}) = f(t) \iff f(sts^{-1})f(t)^{-1} = 1 \iff f(sts^{-1}t^{-1}) = 1,\]
from which the equivalence of (3) and (4) follows. Finally, (4) implies (1), since if $1 = f(sts^{-1}t^{-1})$ for all $s,t \in S$, then $1 = f(s)f(t)f(s)^{-1}f(t)^{-1}$, and so $f(s)f(t) = f(t)f(s)$.
\end{proof}

Statement (1) in the next corollary generalizes~\cite[Theorem 5.4]{AKKM1}, which shows that $\sim_p^1$ is the identity relation if and only if $S$ is commutative.

\begin{corollary}\label{univ-rel-cor}
The following hold for any semigroup $S$.
\begin{enumerate}
\item[$(1)$] The semigroup $S$ is commutative if and only if $\, \approx$ is the identity relation, where $\, \approx$ is any of $\sim_p^1$, $\sim_p$, $\sim_s^1$, $\sim_s$.
\item[$(2)$] The semigroup $S$ has no nontrivial commutative homomorphic images if and only if $\sim_s$ is the universal relation on $S$.
\end{enumerate}
\end{corollary}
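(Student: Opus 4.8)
The plan is to derive both parts directly from Theorem~\ref{least-comm} and Corollary~\ref{img-cor}, since those results already package the essential content: that $\sim_s$ is the least congruence with commutative quotient, and that it records exactly the identifications forced by homomorphisms onto commutative images. No genuinely new construction should be needed.

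For part (1), I would first record that each of the four relations is reflexive: for any $s \in S$, taking $p = s$ and $r = 1$ gives $s \sim_p^1 s$, and the containments $\sim_p^1 \subseteq \sim_s^1 \subseteq \sim_s$ and $\sim_p^1 \subseteq \sim_p \subseteq \sim_s$ propagate reflexivity to all four. Consequently each relation equals the identity relation precisely when it is \emph{contained} in the identity. For the forward implication, if $S$ is commutative then the identity congruence has commutative quotient $S$ itself, so Theorem~\ref{least-comm} gives $\sim_s \subseteq {}$ identity, and the containment chain forces all four relations to be the identity. For the converse, I would use that $st \sim_p^1 ts$ for all $s,t \in S$ (take $p = s$, $r = t$), whence $st \approx ts$ for every $\approx$ in the list; if $\approx$ is the identity relation, this yields $st = ts$, so $S$ is commutative.

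For part (2), the key observation is that $S/\sim_s$ is the largest commutative homomorphic image of $S$: by Theorem~\ref{least-comm} the commutative quotients of $S$ are exactly the $S/\rho$ with $\rho \supseteq {}\sim_s$, and each of these is itself a homomorphic image of $S/\sim_s$. I would prove the forward direction by considering the quotient map $\pi \colon S \to S/\sim_s$, whose image is commutative; if $S$ admits no nontrivial commutative homomorphic image, then $S/\sim_s$ must be a one-element semigroup, which says precisely that $\sim_s$ is the universal relation. Conversely, if $\sim_s$ is universal and $f \colon S \to T$ is any homomorphism with $f(S)$ commutative, then Corollary~\ref{img-cor} places $\sim_s$ inside the kernel of $f$, forcing that kernel to be universal and hence $f(S)$ to be trivial.

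I do not expect a serious obstacle, as the statement is essentially a repackaging of Theorem~\ref{least-comm}. The only point that warrants care is the bookkeeping in part (2): one should match ``no nontrivial commutative homomorphic image'' against the triviality of the single largest such image $S/\sim_s$, rather than verifying the condition image-by-image, and interpret ``trivial'' consistently as ``one-element'' throughout.
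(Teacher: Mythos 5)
Your proposal is correct and takes essentially the same route as the paper: both parts are consequences of Theorem~\ref{least-comm} and Corollary~\ref{img-cor}, with your part (1) simply unpacking the paper's one-line application of Corollary~\ref{img-cor} to the identity homomorphism $f\colon S \to S$ (including the reflexivity of $\sim_p^1$ and the observation $st \sim_p^1 ts$, which the paper leaves implicit), and your part (2) matching the paper's equivalence chain ``$\sim_s$ universal $\iff S/\!\sim_s$ is trivial $\iff$ no nontrivial commutative homomorphic images.'' No gaps; the bookkeeping you flag (interpreting ``trivial'' as one-element and using that $S/\!\sim_s$ is the largest commutative image) is exactly how the paper handles it.
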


\begin{proof}
(1) This follows immediately from the equivalence of (1) and (2) in Corollary~\ref{img-cor}, upon taking $T=S$ and letting $f: S \to S$ be the identity homomorphism.

(2) The relation $\sim_s$ is universal if and only if $S/\sim_s \, \cong \{0\}$ if and only if $S$ has no nontrivial commutative homomorphic images, by Theorem~\ref{least-comm}.
\end{proof}

Clearly, if $\sim_p$ (or $\sim_p^1$, or $\sim_s^1$) is the universal relation on a semigroup, then so is $\sim_s$. The converse need not hold, however. For example, $\sim_s$ is universal in any Rees matrix semigroup with a sandwich matrix having $0$ entries, according to Corollary~\ref{rees-0-sym} below. However, $\sim_p$ is not universal in such a semigroup, provided that the sandwich matrix has any nonzero entries, by Theorem~\ref{rees-prim}.

Next we strengthen an observation made in Corollary~\ref{img-cor}, and show that $\sim_s$ and $\sim_s^1$ result in a natural relation on any group $G$, namely being in the same coset of the commutator subgroup $[G,G]$. We note that the relation introduced in~\cite{CCT} also reduces to membership in the commutator subgroup--see~\cite[Theorem 2.1]{CO}.

\begin{corollary} \label{group-sym}
Let $G$ be a group, and $s,t \in G$. Then $st^{-1} \in [G,G]$ if and only if $s \sim_s^1 t$ if and only if $s \sim_s t$.
\end{corollary}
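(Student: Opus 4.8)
The plan is to prove the two stated biconditionals by closing the cycle of implications
\[
(s \sim_s^1 t) \implies (s \sim_s t) \implies (st^{-1} \in [G,G]) \implies (s \sim_s^1 t),
\]
so that all three conditions coincide. The first implication is immediate, since $\sim_s^1 \, \subseteq \, \sim_s$ always, as noted after Definition~\ref{symm-conj-def}.

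For the second implication I would invoke the machinery already in place. The abelianization map $\pi\colon G \to G/[G,G]$ is a homomorphism whose image is commutative, so by the equivalence of (1) and (2) in Corollary~\ref{img-cor}, $s \sim_s t$ forces $\pi(s) = \pi(t)$, which is exactly the statement $st^{-1} \in [G,G]$. (Alternatively one can appeal directly to Theorem~\ref{least-comm}: $\sim_s$ is contained in every congruence with commutative quotient, and on a group the congruence attached to $\pi$ consists of the cosets of $[G,G]$.) Since $G$ is a group it is a monoid, so $G^1 = G$ and all factors appearing below may be taken from $G$ itself.

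The substance of the argument --- and the step I expect to be the main obstacle --- is the third implication, because $st^{-1} \in [G,G]$ must be upgraded all the way to the \emph{single-step} relation $s \sim_s^1 t$ rather than merely to $s \sim_s t$; in effect one must show that in a group $\sim_s^1$ is already transitive, via an explicit rearrangement. The key observation is that a commutator expands into four factors that can be permuted to cancel: writing $c = aba^{-1}b^{-1}$, the factor list $(a,b,a^{-1},b^{-1})$ has product $c$, whereas the reordering $(a,a^{-1},b,b^{-1})$ has product $1$. I would exploit this as follows. Every element of $[G,G]$ is a finite product of commutators (the set of such products is already a subgroup, as the inverse of a commutator is a commutator), so I may write $st^{-1} = a_1b_1a_1^{-1}b_1^{-1}\cdots a_kb_ka_k^{-1}b_k^{-1}$ and hence
\[
s = a_1b_1a_1^{-1}b_1^{-1}\cdots a_kb_ka_k^{-1}b_k^{-1}\,t,
\]
a product of $n = 4k+1$ factors of $G$. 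Taking $f$ to be the permutation that reorders this list as $(a_1,a_1^{-1},b_1,b_1^{-1},\dots,a_k,a_k^{-1},b_k,b_k^{-1},t)$, every adjacent inverse pair cancels and the permuted product collapses to $t$, so $s \sim_s^1 t$ by a single application of Definition~\ref{symm-conj-def}. The degenerate case $st^{-1}=1$ is handled by taking $n=1$ and $p_1 = s = t$.

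Once the third implication is established the three conditions are equivalent, completing the proof. The only point requiring care is verifying that the global reindexing $f$ genuinely carries the ordered factor list for $s$ to the pair-cancelling list, but this is routine bookkeeping once the single-commutator cancellation is understood.
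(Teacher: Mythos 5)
Your proposal is correct and matches the paper's own proof essentially step for step: both use Corollary~\ref{img-cor} with the abelianization $G \to G/[G,G]$ for the implication $s \sim_s t \implies st^{-1} \in [G,G]$, and both upgrade $st^{-1} \in [G,G]$ directly to $s \sim_s^1 t$ by expanding $st^{-1}$ into commutators and permuting the resulting factor list $(p_1, r_1, p_1^{-1}, r_1^{-1}, \dots, t)$ so that inverse pairs become adjacent and cancel. The only cosmetic difference is that you spell out the permutation and the degenerate case $s = t$ explicitly, which the paper leaves implicit.
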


\begin{proof}
Suppose that $s \sim_s t$, let $T = G/[G,G]$, and let $f : G \to T$ be the natural projection. Then $\ker(f) = [G,G]$, and so $f(s) = f(t)$, by Corollary~\ref{img-cor}. Thus $st^{-1} \in [G,G]$.

Next suppose that $st^{-1} \in [G,G]$, and write 
\[st^{-1} = p_1r_1p_1^{-1}r_1^{-1} \cdots p_nr_np_n^{-1}r_n^{-1}\] 
for some $p_i,r_i \in G$. Then 
\[s = (p_1r_1p_1^{-1}r_1^{-1} \cdots p_nr_np_n^{-1}r_n^{-1})t, \text{ and } t = (p_1p_1^{-1})(r_1r_1^{-1}) \cdots (p_np_n^{-1})(r_nr_n^{-1})t,\]
showing that $s \sim_s^1 t$. Finally, $s \sim_s^1 t$ certainly implies that $s \sim_s t$.
\end{proof}

We note that while group-conjugacy is contained in $\sim_s$, since $\sim_p$ is, this relation is generally larger in a group, as the next example shows.

\begin{example} \label{group-eg}
Let $\Omega$ be any finite set, and let $\B (\Omega)$ be the group of all permutations of $\Omega$. According to~\cite[Theorem 1]{O}, $[\B (\Omega), \B (\Omega)]$ is the alternating subgroup of $\B (\Omega)$. So, by Corollary~\ref{group-sym}, for any $s,t \in \B(\Omega)$, we have $s \sim_s t$ (and $s \sim_s^1 t$) if and only if $st^{-1}$ is an even permutation. On the other hand, it is well-known, and easy to see, that two elements of $\B (\Omega)$ are group-conjugate if and only if they have the same number of orbits of each size. It follows that $\sim_s \, = \, \sim_s^1$ is strictly larger than group-conjugacy in $\B (\Omega)$.

More concretely, let $\Omega=\{1, 2, 3, 4, 5, 6\}$, and let $s = (12)(34)(56)$ and $t = (56)$ be elements of $\B (\Omega)$, written in cycle notation. Then $s$ and $t$ are not group-conjugate, since $s$ has three nontrivial orbits, whereas $t$ has only one. However, $st^{-1} = (12)(34)$ is an element of $[\B (\Omega), \B (\Omega)],$ and so $s \sim_s t$. 
\end{example}

Next we give an analogue of Corollary~\ref{group-sym} for rings, as well as an analogue of~\cite[Theorem 3.15(2)]{AlL}, which says that if $s \sim_p t$, for a pair of elements $s,t$ in a ring, then $s-t$ is a sum of additive commutators.

\begin{corollary} \label{comm-ideal}
Let $R$ be a $($not necessarily unital$)$ ring, and let $\, [R,R]$ denote the ideal of $R$ generated by its additive commutators $pr-rp$. Then $\, [R,R]$ is the additive subgroup of $R$ generated by elements of the form $s-t$, where $s,t \in R$ and $s \sim_s^1 t$ $($or $s \sim_s t$$)$.
\end{corollary}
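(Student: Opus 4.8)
The claim has two directions, so let me denote by $C$ the ideal $[R,R]$ generated by additive commutators $pr-rp$, and by $D$ the additive subgroup generated by all differences $s-t$ with $s \sim_s^1 t$. The plan is to show $C = D$ by proving $C \subseteq D$ and $D \subseteq C$ separately, mirroring the structure of the group case in Corollary~\ref{group-sym} but now working additively since commutativity of the quotient is being forced in the multiplicative structure while the commutators we collect live in the additive group.

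First I would prove $C \subseteq D$. It suffices to show each generator $pr - rp$ of $C$ (as an ideal) lies in $D$, and then check $D$ absorbs the ideal operations. The generator $pr-rp$ is immediate: $pr \sim_s^1 rp$ by definition (take $p_1 = p$, $p_2 = r$ and the transposition), so $pr - rp \in D$. The subtler point is that $C$ is the \emph{ideal} generated by these elements, so $D$ must contain not just sums of commutators but also products $a(pr-rp)$ and $(pr-rp)a$ for arbitrary $a \in R$. Here I would use that $a(pr) \sim_s^1 a(rp)$ via the same transposition applied to the factorization $a \cdot p \cdot r$ (relabeling so that the permuted factors are $p$ and $r$), giving $a(pr) - a(rp) = a(pr-rp) \in D$, and symmetrically on the right. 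Since $D$ is by construction an additive subgroup closed under these left and right multiplications, it is an ideal containing all the generators of $C$, whence $C \subseteq D$.

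For the reverse inclusion $D \subseteq C$, I would pass to the commutative quotient. Let $\pi : R \to R/C$ be the canonical projection; since $C$ contains all $pr-rp$, the quotient ring $R/C$ is commutative. The key step is the observation that for the multiplicative semigroup $(R, \cdot)$, the map $\pi$ has commutative image, so by Corollary~\ref{img-cor} (applied to the multiplicative semigroup homomorphism underlying $\pi$, whose image is commutative), $s \sim_s^1 t$ implies $\pi(s) = \pi(t)$, i.e. $s - t \in C$. Therefore every generator $s-t$ of $D$ lies in $C$, and since $C$ is an additive subgroup, $D \subseteq C$. I expect the main obstacle to be the first direction, specifically the bookkeeping needed to confirm that left and right multiples of commutators are again differences of the form $s-t$ with $s \sim_s^1 t$ — one must correctly set up the factorizations so that the relevant permutation is a single transposition and the extra factor $a$ is carried along untouched. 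The second direction is essentially a direct appeal to Theorem~\ref{least-comm}/Corollary~\ref{img-cor} once one notes that $C$ is exactly the kernel needed to commutativize the multiplicative structure.
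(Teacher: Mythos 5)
Your proof is correct and follows essentially the same route as the paper: the inclusion of the subgroup $D$ into $[R,R]$ comes from the commutativity of $R/[R,R]$ via Theorem~\ref{least-comm}/Corollary~\ref{img-cor}, and the reverse inclusion comes from the observation that multiplying a transposed pair of factors on either side preserves $\sim_s^1$ (i.e., $a(pr)b \sim_s^1 a(rp)b$), so every additive generator of the ideal $[R,R]$ is a difference of $\sim_s^1$-related elements. The paper packages this second step by writing $[R,R]$, as an additive group, as generated by elements $q(rs-sr)t = qrst-qsrt$ with $qrst \sim_s^1 qsrt$, rather than by verifying that $D$ is an ideal as you do, but the underlying computation is identical.
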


\begin{proof}
Clearly, $R/[R,R]$ is a commutative ring, and hence a commutative semigroup. Thus, by Theorem~\ref{least-comm}, if $s \sim_s t$, for some $s,t \in R$, then $s-t \in [R, R]$. 

Next let $I_1$, respectively $I_2$, denote the additive subgroup of $R$ generated by elements of the form $s-t$ ($s,t \in R$), where $s \sim_s^1 t$, respectively $s \sim_s t$. Then, by the previous paragraph, $I_1 \subseteq I_2 \subseteq [R,R]$. Now, as an additive group, $[R,R]$ is generated by elements of the form 
\[q(rs-sr)t = qrst-qsrt\]
($q,r,s,t \in R$). Since $qrst \sim_s^1 qsrt$, we see that $[R,R] \subseteq I_1$, and hence $[R,R] = I_1 = I_2$.
\end{proof}

The next example shows that membership in the same coset of the commutator ideal of a ring is generally strictly larger than $\sim_s$, in contrast to the situation with the commutator subgroup of a group.

\begin{example}
Let $F$ be a field, $n \geq 2$, and $R = \M_n(F)$ the ring of $n\times n$ matrices over $F$. Also let $s \in R$ be any invertible matrix with trace $0$. Since $s$ has trace $0$, it is an additive commutator in $R$, by the Shoda--Albert--Muckenhoupt theorem~\cite{AM}, and so $s=s-0 \in [R,R]$. However, since $s$ has a nonzero determinant, if $s=p_1\cdots p_n$ for some $p_1, \dots, p_n \in R$, then each $p_i$ must also have a nonzero determinant. Thus, any $t \in R$, such that $s \sim_s^1 t$, must have the same property. From this it follows that the $\sim_s$-equivalence class of $s$ in $R$ contains only invertible matrices, and, in particular, $s \not\sim_s 0$.
\end{example}

\section{Symmetric Relation Basics}

In this section we explore basic properties of $\sim_s^1$ and $\sim_s$, which will be used extensively in what follows. Some of these properties are unique to $\sim_s$, among the various relations mentioned in Definitions~\ref{sim-def1},~\ref{symm-conj-def},~\ref{sim-def2}.

Statement (2) in the next lemma is a convenient reformulation of the claim in Theorem~\ref{least-comm} that $\sim_s$ is a congruence, whereas (3) and (4) show that $\sim_s^1$ and $\sim_s$ share a standard property of group-conjugacy. Statements (1) and (4) are based on results of Leroy and Nasernejad for rings, and can be proved the same way. But since the arguments are short, we give them here, for convenience. 

\begin{lemma} \label{Leroy-lemma2}
Let $S$ be a semigroup, and $s_1,s_2,t_1,t_2 \in S$. 
\begin{enumerate}
\item[$(1)$] $($cf.\ Lemma 3.2(iii) in~\cite{LN}.$)$ If $s_1 \sim_s^1 t_1$ and $s_2 \sim_s^1 t_2$, then $s_1s_2 \sim_s^1 t_1t_2$. 
\item[$(2)$] If $s_1 \sim_s t_1$ and $s_2 \sim_s t_2$, then $s_1s_2 \sim_s t_1t_2$. 
\item[$(3)$] If $s_1 \sim_s^1 t_1$, then $s_1^n \sim_s^1 t_1^n$ for all $n \in \Z^+$. 
\item[$(4)$] $($cf.\ Theorem 4.3(i) in~\cite{LN}.$)$ If $s_1 \sim_s t_1$, then $s_1^n \sim_s t_1^n$ for all $n \in \Z^+$. 
\end{enumerate}
\end{lemma}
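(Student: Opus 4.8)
Looking at the four statements in Lemma~\ref{Leroy-lemma2}, my plan is to prove them in the order given, since each later statement follows naturally from the earlier ones by transitivity or induction.

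For statement (1), I would start by unpacking the definitions. Writing $s_1 = p_1\cdots p_m$ with $t_1 = p_{f(1)}\cdots p_{f(m)}$ for some $f \in \B(\{1,\dots,m\})$, and $s_2 = q_1\cdots q_k$ with $t_2 = q_{g(1)}\cdots q_{g(k)}$ for some $g \in \B(\{1,\dots,k\})$, the key observation is that concatenation gives $s_1 s_2 = p_1\cdots p_m q_1\cdots q_k$. I would then exhibit a single permutation $h$ of $\{1,\dots,m+k\}$ that simultaneously applies $f$ to the first block and $g$ (suitably shifted) to the second block, so that the permuted product reads $p_{f(1)}\cdots p_{f(m)} q_{g(1)}\cdots q_{g(k)} = t_1 t_2$. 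This shows directly that $s_1 s_2 \sim_s^1 t_1 t_2$. The only thing to be careful about is bookkeeping the index shift in the second block, but this is routine.

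For statement (2), I would chain together the one-step relation just proved. Since $s_1 \sim_s t_1$ and $s_2 \sim_s t_2$, there are finite sequences of $\sim_s^1$-steps connecting each pair. The clean approach is to first move $s_1 s_2 \sim_s t_1 s_2$ by applying the congruence property (right-multiplication by $s_2$, which Theorem~\ref{least-comm} guarantees is compatible with $\sim_s$), and then $t_1 s_2 \sim_s t_1 t_2$ by left-multiplication by $t_1$; transitivity of $\sim_s$ finishes it. Alternatively one can interleave the $\sim_s^1$-steps using statement (1) at each stage, but invoking the congruence property from Theorem~\ref{least-comm} is shorter and already available to me.

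Statements (3) and (4) are the diagonal cases of (1) and (2). For (3), I would take $s_2 = s_1$, $t_2 = t_1$ in (1) to get $s_1^2 \sim_s^1 t_1^2$, and then induct: assuming $s_1^n \sim_s^1 t_1^n$, apply (1) with the pairs $(s_1^n, t_1^n)$ and $(s_1, t_1)$ to obtain $s_1^{n+1} \sim_s^1 t_1^{n+1}$. Statement (4) follows identically using (2) in place of (1), or simply because $\sim_s^1 \subseteq \sim_s$. I do not anticipate a genuine obstacle anywhere here; the lemma is essentially a formal consequence of the definition of $\sim_s^1$ together with the congruence structure, and the one point demanding care is the explicit construction of the block permutation $h$ in statement (1).
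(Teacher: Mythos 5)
Your proposal is correct and follows essentially the same route as the paper: the block-permutation construction for (1), the congruence property of $\sim_s$ guaranteed by Theorem~\ref{least-comm} (plus transitivity) for (2), and induction on $n$ via (1) and (2) for (3) and (4). The only imprecise point is your throwaway alternative for (4) (``or simply because $\sim_s^1 \subseteq \sim_s$''), which by itself does not suffice, since the hypothesis of (4) is $s_1 \sim_s t_1$ rather than $s_1 \sim_s^1 t_1$ (one would need to apply (3) link-by-link along a chain of $\sim_s^1$-steps); but your primary argument via (2) is exactly what the paper does, so this aside is not load-bearing.
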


\begin{proof}
(1) Suppose that $s_1 \sim_s^1 t_1$ and $s_2 \sim_s^1 t_2$. Then there exist $n,m \in \Z^+$, with $n<m$, as well as $p_1, \dots, p_{m} \in S^1$, $f_1 \in \B(\{1, \dots, n\})$, and $f_2 \in \B(\{n+1, \dots, m\})$ such that $s_1=p_1\cdots p_{n}$, $s_2=p_{n+1}\cdots p_{m}$, $t_1 = p_{f_1(1)}\cdots p_{f_1(n)}$, and $t_2 = p_{f_2(n+1)}\cdots p_{f_2(m)}$. Let $g \in  \B(\{1, \dots, m\})$ be such that $g$ agrees with $f_1$ on $\{1, \dots, n\}$, and agrees with $f_2$ on $\{n+1, \dots, m\}$. Then 
\[s_1s_2 =  p_1\cdots p_{n}p_{n+1}\cdots p_{m} \sim_s^1 p_{g(1)}\cdots p_{g(n)}p_{g(n+1)}\cdots p_{g(m)}  = t_1t_2,\]
and so $s_1s_2 \sim_s^1 t_1t_2$.

(2) It is a standard fact that an equivalence relation $\rho$ on $S$ is a congruence (as defined in Section~\ref{sym-sect}) if and only if $s_1 \rho t_1$ and $s_2 \rho t_2$ imply that $(s_1s_2) \rho (t_1t_2)$ for all $s_1,s_2,t_1,t_2 \in S$ (see, e.g., \cite[Proposition 1.5.1]{H}). Thus the claim follows from Theorem~\ref{least-comm}.

(3) This follows from (1), by induction on $n$.

(4) This follows from (2), by induction on $n$.
\end{proof}

The next lemma shows that $\sim_s^1$ and $\sim_s$ interact well with some of the standard structure in an \emph{inverse semigroup}, i.e., a semigroup $S$ where for each $s \in S$ there is a unique element $s^{-1} \in S$ satisfying $s = ss^{-1}s$ and $s^{-1} = s^{-1}ss^{-1}$. For an inverse semigroup $S$, \emph{the natural partial order} $\leq$ on $S$ is defined by $s\leq t$ ($s,t \in S$) if $s=te$ for some $e \in E(S)$, the set of idempotents of $S$. Equivalently, $s\leq t$ if $s=et$ for some $e \in E(S)$. (See~\cite[\S 5.2]{H} for more details.)

\begin{lemma} \label{inv-semi-lem}
Let $S$ be an inverse semigroup, and $s,t \in S$. 
\begin{enumerate}
\item[$(1)$] If $s\sim_s^1 t$, then $s^{-1} \sim_s^1 t^{-1}$.
\item[$(2)$] If $s\sim_s t$, then $s^{-1} \sim_s t^{-1}$.
\item[$(3)$] If $s \leq t$, then for all $t' \in S$ such that $t \sim_s^1 t'$, there exists $s' \in S$ such that $s' \leq t'$ and $s \sim_s^1 s'$.
\item[$(4)$] If $s \leq t$, then for all $t' \in S$ such that $t \sim_s t'$, there exists $s' \in S$ such that $s' \leq t'$ and $s \sim_s s'$.
\end{enumerate}
\end{lemma}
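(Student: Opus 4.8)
The plan is to prove the four statements of Lemma~\ref{inv-semi-lem} in the order given, using the standard involutive structure of inverse semigroups. Recall the key identity that in any inverse semigroup the map $s \mapsto s^{-1}$ is an involutive anti-automorphism: $(ab)^{-1} = b^{-1}a^{-1}$ for all $a,b$, and $(s^{-1})^{-1} = s$. This antihomomorphism property is exactly what is needed to handle products under $\sim_s^1$, since reversing a product turns a forward permutation of the factors into another (reversed) permutation.

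For part~(1), suppose $s \sim_s^1 t$ via $s = p_1 \cdots p_n$ and $t = p_{f(1)} \cdots p_{f(n)}$ for some $f \in \B(\{1,\dots,n\})$. Taking inverses and using the anti-automorphism property gives $s^{-1} = p_n^{-1} \cdots p_1^{-1}$ and $t^{-1} = p_{f(n)}^{-1} \cdots p_{f(1)}^{-1}$. The plan is to exhibit these as a single factorization permuted by one permutation: set $q_i = p_{n+1-i}^{-1}$, so that $s^{-1} = q_1 \cdots q_n$, and then check that the factors $p_{f(n)}^{-1}, \dots, p_{f(1)}^{-1}$ appearing in $t^{-1}$ are exactly a permutation of the $q_i$. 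Explicitly, I would write $t^{-1} = q_{g(1)} \cdots q_{g(n)}$ where $g$ is the composition of $f$ with the order-reversal map $i \mapsto n+1-i$ (applied on both sides), and verify $g$ is a genuine permutation of $\{1,\dots,n\}$. This yields $s^{-1} \sim_s^1 t^{-1}$. Part~(2) then follows immediately by applying part~(1) along each single step of a $\sim_s$-chain from $s$ to $t$ and using that $\sim_s$ is the transitive closure of $\sim_s^1$.

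For parts~(3) and~(4), the essential move is to transport the natural-partial-order witness through the factorization. Suppose $s \leq t$, so $s = et$ for some idempotent $e \in E(S)$, and suppose $t \sim_s^1 t'$ via $t = p_1 \cdots p_n$ and $t' = p_{f(1)} \cdots p_{f(n)}$. The plan is to define $s' = e t'= e\, p_{f(1)} \cdots p_{f(n)}$, which automatically satisfies $s' \leq t'$ since $e$ is idempotent. It then remains to show $s \sim_s^1 s'$. Here I would write $s = e t = (e p_1) p_2 \cdots p_n$, treating $e p_1$ as a single new factor $p_0' = e p_1 \in S^1$, and correspondingly $s' = (e p_{f(1)}) p_{f(2)} \cdots p_{f(n)}$. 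The natural worry is that prepending $e$ breaks the clean ``same multiset of factors'' structure, because $e$ sits in front of different factors on the two sides. The main obstacle, then, is organizing the bookkeeping so that $s$ and $s'$ are genuinely related by a single $\sim_s^1$ step; I expect the cleanest route is to include $e$ as an explicit extra factor in both factorizations — writing $s = e p_1 \cdots p_n$ and $s' = e p_{f(1)} \cdots p_{f(n)}$ as products of $n+1$ terms from $S^1$ — and then use a permutation of $\{0,1,\dots,n\}$ that fixes the slot holding $e$ and permutes the remaining slots according to $f$. One must check this is legitimate: both are products of the same $n+1$ elements (namely $e, p_1, \dots, p_n$) in permuted order, with $e$ kept in the leading position on each side, so a single permutation realizes $s \sim_s^1 s'$. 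Part~(4) follows from part~(3) by an induction along a $\sim_s$-chain: given $t \sim_s t'$, decompose it as $t = t_0 \sim_s^1 t_1 \sim_s^1 \cdots \sim_s^1 t_m = t'$, and apply part~(3) repeatedly to produce a matching chain $s = s_0 \sim_s^1 s_1 \sim_s^1 \cdots \sim_s^1 s_m = s'$ with $s_i \leq t_i$ at each stage, so that $s \sim_s s'$ and $s' \leq t'$.

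The genuinely delicate point to verify carefully is the equivalence of the two defining conditions for $\leq$ (that $s = te$ for some idempotent iff $s = e't$ for some idempotent), which is cited from~\cite[\S 5.2]{H}; I would use whichever form makes the factorization argument in part~(3) most transparent, and confirm that the resulting $s'$ lies in $S$ rather than merely $S^1$. Apart from this, the argument is a matter of carefully tracking permutations through the anti-automorphism (parts~(1)–(2)) and through a fixed prepended idempotent (parts~(3)–(4)), with the transitive-closure steps being routine inductions.
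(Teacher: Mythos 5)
Your proposal is correct and follows essentially the same route as the paper: parts (1)--(2) via the anti-automorphism $x \mapsto x^{-1}$ applied to the factorization, and parts (3)--(4) by multiplying the factorization witnessing $t \sim_s^1 t'$ (respectively $t \sim_s t'$) by the idempotent witnessing $s \leq t$. The only difference is packaging: where you inline the permutation bookkeeping for (3) and run an induction along the $\sim_s$-chain for (4), the paper simply invokes its Lemma~\ref{Leroy-lemma2}(1),(2) --- compatibility of $\sim_s^1$ and $\sim_s$ with products, i.e.\ $te \sim_s^1 t'e$ and $te \sim_s t'e$ --- which is exactly the argument you re-derive by hand.
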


\begin{proof}
(1) Suppose that $s\sim_s^1 t$. Then $s=p_1\cdots p_{n}$ and $t = p_{f(1)}\cdots p_{f(n)}$ for some $n \in \Z^+$, $p_1, \dots, p_n \in S^1$, and $f \in \B(\{1, \dots, n\})$. Hence 
\[s^{-1} = p_n^{-1}\cdots p_1^{-1} \sim_s^1 p_{f(n)}^{-1}\cdots p_{f(1)}^{-1} = t^{-1}\]
(see, e.g., \cite[Proposition 5.1.2(1)]{H}).

(2) This is a consequence of $\sim_s$ being a congruence, but can also be shown directly. Specifically, suppose that $s\sim_s t$. Then there exist $r_1, \dots, r_n \in S$ such that 
\[s = r_1 \sim_s^1 r_2 \sim_s^1 \cdots \sim_s^1 r_n = t.\]
Hence $s^{-1} \sim_s t^{-1}$, by (1).

(3)  Assuming that $s \leq t$, we have $s=te$ for some $e \in E(S)$. Suppose that $t' \in S$ is such that $t \sim_s^1 t'$, and let $s' = t'e$. Then $s' \leq t'$, and $s = te \sim_s^1 t'e = s'$, by Lemma~\ref{Leroy-lemma2}(1).

(4) This can be shown by the same argument as (3), but using Lemma~\ref{Leroy-lemma2}(2).
\end{proof}

Then next observation gives a succinct alternative description of $\sim_s$.

\begin{proposition}[cf.\ Lemma 3.2(i) in~\cite{LN}] \label{Leroy-lemma}
Let $S$ be a semigroup, and $s,t \in S$. Write $s \sim_*^1 t$ if there exist $p_1, p_2, p_3 \in S^1$ such that 
\[s=p_1p_2p_3, \ p_1p_3p_2=t,\]
and denote by $\sim_*$ the transitive closure of the relation $\sim_*^1$. Then $\sim_* \, = \, \sim_s$.
\end{proposition}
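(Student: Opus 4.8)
The plan is to prove the set equality $\sim_* \, = \, \sim_s$ by two containments, and the natural strategy is to show that each single step of one relation is realized by a (possibly longer) chain of the other, after which transitivity closes the gap. Since both $\sim_*$ and $\sim_s$ are defined as transitive closures, it suffices to compare the generating relations $\sim_*^1$ and $\sim_s^1$: concretely, I would prove $\sim_*^1 \, \subseteq \, \sim_s^1$ and $\sim_s^1 \, \subseteq \, \sim_*$, and then take transitive closures.

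The easy containment is $\sim_*^1 \, \subseteq \, \sim_s^1$. If $s \sim_*^1 t$, then $s = p_1p_2p_3$ and $t = p_1p_3p_2$ with $p_1,p_2,p_3 \in S^1$; this is exactly an instance of Definition~\ref{symm-conj-def} with $n=3$ and $f$ the transposition swapping the second and third factors. Hence $s \sim_s^1 t$, and taking transitive closures gives $\sim_* \, \subseteq \, \sim_s$.

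The substance is the reverse containment $\sim_s^1 \, \subseteq \, \sim_*$. Suppose $s = p_1 \cdots p_n$ and $t = p_{f(1)} \cdots p_{f(n)}$ for some permutation $f \in \B(\{1,\dots,n\})$. The key observation is that a single $\sim_*^1$-step $q_1 \cdots q_k \sim_*^1 q_1 \cdots q_{i-1} q_{i+1} q_i q_{i+2} \cdots q_k$ swaps two \emph{adjacent} factors (absorbing the prefix into $p_1$ and the suffix into $p_3$ in the definition of $\sim_*^1$, with the two swapped factors playing the roles of $p_2$ and $p_3$), and therefore each adjacent transposition of the product is a $\sim_*^1$-step. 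Since the symmetric group $\B(\{1,\dots,n\})$ is generated by adjacent transpositions, any rearrangement $p_{f(1)} \cdots p_{f(n)}$ is reachable from $p_1 \cdots p_n$ by a finite sequence of adjacent-transposition swaps; concatenating the corresponding $\sim_*^1$-steps yields $s \sim_* t$. Taking transitive closures then gives $\sim_s \, \subseteq \, \sim_*$, and combined with the first containment the two relations coincide.

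The only point requiring care — the ``main obstacle,'' though it is more bookkeeping than genuine difficulty — is verifying that adjacent transpositions generate $\B(\{1,\dots,n\})$ and that composing them produces a bona fide $\sim_*$-chain. I would invoke the standard fact that adjacent transpositions generate the symmetric group, and note that since $\sim_*$ is by construction transitive, stringing together the single swaps is automatic. One should also confirm the empty/degenerate cases ($n=1$, or the identity permutation) reduce to reflexivity, which holds since $s \sim_*^1 s$ via $p_1 = s$, $p_2 = p_3 = 1$.
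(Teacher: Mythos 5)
Your overall architecture is the same as the paper's: prove $\sim_*^1 \, \subseteq \, \sim_s^1$ directly, and reduce the converse $\sim_s^1 \, \subseteq \, \sim_*$ to transpositions, letting transitivity finish. The first containment and the reduction are fine, but the central claim in your second containment is false. You assert that an adjacent swap in the \emph{middle} of a product, $q_1 \cdots q_k \sim_*^1 q_1 \cdots q_{i-1}q_{i+1}q_iq_{i+2}\cdots q_k$, is a \emph{single} $\sim_*^1$-step, ``absorbing the prefix into $p_1$ and the suffix into $p_3$, with the two swapped factors playing the roles of $p_2$ and $p_3$.'' That parenthetical is self-contradictory: $p_3$ cannot simultaneously be one of the swapped factors and contain the suffix. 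The relation $\sim_*^1$ has exactly three slots, $s=p_1p_2p_3$ and $t=p_1p_3p_2$, so the swapped pair must sit at the \emph{end} of the product; there is no fourth slot for a suffix. Concretely, in the free semigroup on $\{a,b,c,d\}$, enumerating all decompositions $abcd=p_1p_2p_3$ with $p_i \in S^1$ shows that the possible values of $p_1p_3p_2$ are exactly $abcd$, $bcda$, $cdab$, $dabc$, $acdb$, $adbc$, $abdc$; the word $acbd$ never occurs, so $abcd \not\sim_*^1 acbd$ even though these words differ by an adjacent transposition. Thus the step your argument rests on fails.

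The claim you need is still true, but it costs a chain of steps rather than one. For a middle swap one can use three $\sim_*^1$-transitions: first take $p_1 = q_1\cdots q_{i-1}$, $p_2 = q_iq_{i+1}$, $p_3 = q_{i+2}\cdots q_k$ to move the pair to the end; then swap the now-final pair; then take $p_1 = q_1\cdots q_{i-1}$, $p_2 = q_{i+2}\cdots q_k$, $p_3 = q_{i+1}q_i$ to move it back:
\[q_1 \cdots q_k \;\sim_*^1\; q_1\cdots q_{i-1}\,q_{i+2}\cdots q_k\,q_iq_{i+1} \;\sim_*^1\; q_1\cdots q_{i-1}\,q_{i+2}\cdots q_k\,q_{i+1}q_i \;\sim_*^1\; q_1\cdots q_{i-1}\,q_{i+1}q_i\,q_{i+2}\cdots q_k.\]
This is precisely the extra work the paper (following Leroy--Nasernejad) points to when, after reducing to transpositions $f=(ij)$, it says that ``a sequence of $\sim_*^1$-transitions'' --- not a single one --- takes $p_1\cdots p_n$ to $p_{f(1)}\cdots p_{f(n)}$. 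With this three-step substitute for your one-step claim, your proof goes through; as written, it does not.
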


\begin{proof}
This is shown for rings in~\cite[Lemma 3.2(i)]{LN}, but the proof does not use addition, and caries over word-for-word to semigroups. However, let us outline the argument here, for the convenience of the reader.

Since, clearly, $\sim_*^1 \, \subseteq \, \sim_s$, and the latter is transitive, we have $\sim_* \, \subseteq \, \sim_s$. For the opposite inclusion, it similarly suffices to show that $\sim_s^1 \, \subseteq \, \sim_*$. Moreover, given that every permutation of a finite set is a product of transpositions, it is enough to show that if $s,t \in S$, $n \in \Z^+$, $p_1, \dots, p_n \in S^1$, and $f \in \B(\{1, \dots, n\})$ are such that $f$ is a transposition, $s=p_1\cdots p_{n}$, and $t=p_{f(1)}\cdots p_{f(n)}$, then $s \sim_* t$. Finally, writing $f = (ij)$ in cycle notation, for some $1 \leq i < j \leq n$, one shows directly that a sequence of $\sim_*^1$-transitions takes $p_1\cdots p_{n}$ to $p_{f(1)}\cdots p_{f(n)}$.
\end{proof}

We conclude this section with an examination of the closures under $\sim_s$ of various substructures of a semigroup. Statements (1) and (2) in the next proposition are generalizations of~\cite[Lemma 3.5(iii)]{LN} and~\cite[Proposition 3.10(vi)]{LN}, respectively, which pertain to rings. They, along with statement (4), are essentially consequences of $\sim_s$ being a congruence.

\begin{proposition} \label{sym-closure}
Let $S$ be a semigroup, $T \subseteq S$, and $\overline{T} \subseteq S$ the closure of $T$ under $\sim_s$. Then the following hold.
\begin{enumerate}
\item[$(1)$] $\overline{T} = \overline{\overline{T}}$.
\item[$(2)$] If $T$ is a subsemigroup of $S$, then so is $\overline{T}$.
\item[$(3)$] If $T$ is a left, respectively right, respectively two-sided, ideal of $S$, then $\overline{T}$ is a two-sided ideal.
\item[$(4)$] If $S$ is an inverse semigroup, and $T$ is an inverse subsemigroup of $S$, then so is $\overline{T}$.
\item[$(5)$] If $S$ is a group, and $T$ is a subgroup of $S$, then $\overline{T}$ is the $\,($normal$)$ subgroup of $S$ generated by $T$ and $\, [S,S]$.
\end{enumerate}
\end{proposition}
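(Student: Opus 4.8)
The plan is to treat the five parts in the order given, leaning heavily on the fact (Theorem~\ref{least-comm}) that $\sim_s$ is a congruence, together with the explicit generator $\sim_s^1$ and its multiplicativity from Lemma~\ref{Leroy-lemma2}.

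For (1), since $\sim_s$ is an equivalence relation, $\overline{T}$ is a union of $\sim_s$-classes; the closure of a union of full classes is that same union, so $\overline{\overline{T}} = \overline{T}$ follows formally. For (2), I would take $a, b \in \overline{T}$, so that $a \sim_s t_1$ and $b \sim_s t_2$ for some $t_1, t_2 \in T$. By Lemma~\ref{Leroy-lemma2}(2), $ab \sim_s t_1 t_2$, and $t_1 t_2 \in T$ since $T$ is a subsemigroup; hence $ab \in \overline{T}$. For (3), the key point is that $\sim_s$ forces two-sidedness even from a one-sided ideal: suppose $T$ is a left ideal and take $a \in \overline{T}$ with $a \sim_s t \in T$, and let $r \in S$. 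Then $ar \sim_s tr$; writing $t = p_1 \cdots p_n$ gives $tr = p_1 \cdots p_n r \sim_s^1 r p_1 \cdots p_n = rt \in T$ (using a cyclic permutation), so $ar \sim_s rt \in T$, whence $ar \in \overline{T}$. Together with the immediate $ra \in \overline{T}$ from the left-ideal property, this shows $\overline{T}$ is two-sided; the right-ideal and two-sided cases are symmetric. For (4), combining (2) with Lemma~\ref{inv-semi-lem}(2): if $a \in \overline{T}$ with $a \sim_s t \in T$, then $a^{-1} \sim_s t^{-1} \in T$, so $\overline{T}$ is closed under inverses, and by (2) it is a subsemigroup, hence an inverse subsemigroup.

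The substantive part is (5). Let $N$ denote the (automatically normal, since it contains $[S,S]$) subgroup generated by $T$ and $[S,S]$. For the inclusion $\overline{T} \subseteq N$: if $a \sim_s t \in T$, then by Corollary~\ref{group-sym} we have $at^{-1} \in [S,S] \subseteq N$, and since $t \in T \subseteq N$ we get $a = (at^{-1})t \in N$. For the reverse inclusion $N \subseteq \overline{T}$, I would first observe that $\overline{T}$ is a subgroup by (2) and (4) (a group is an inverse semigroup), and that it contains $T$. It remains to check $[S,S] \subseteq \overline{T}$, after which the subgroup generated by $T \cup [S,S]$ lies in $\overline{T}$ since $\overline{T}$ is already a subgroup. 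For this, pick any $t_0 \in T$ (nonempty as a subgroup contains the identity $e$, and $e \in T$), and note that each commutator $c \in [S,S]$ satisfies $ce \cdot e^{-1} = c \in [S,S]$, so by Corollary~\ref{group-sym} $c \sim_s e \in T$, giving $c \in \overline{T}$; then $[S,S] \subseteq \overline{T}$.

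The main obstacle is the reverse inclusion in (5): one must verify that $\overline{T}$ is genuinely closed enough to contain the whole subgroup $N$, not merely the set-theoretic image of $T$ under $\sim_s$. The clean route is exactly the one above — first upgrade $\overline{T}$ to a subgroup via parts (2) and (4), then use Corollary~\ref{group-sym} to place every commutator in $\overline{T}$ (via $c \sim_s e$), and finally invoke the subgroup property to absorb all products. A tempting but incomplete approach would be to argue elementwise with $\sim_s$ without first establishing that $\overline{T}$ is a subgroup, which risks missing that arbitrary products of elements of $T$ and commutators stay inside $\overline{T}$; funneling everything through Corollary~\ref{group-sym} and the subgroup structure of $\overline{T}$ avoids that gap.
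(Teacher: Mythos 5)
Your proposal is correct and follows essentially the same route as the paper: congruence-based arguments for (1)--(4), and for (5) the identical strategy of upgrading $\overline{T}$ to a subgroup via (2) and (4), placing $[S,S]$ inside it via Corollary~\ref{group-sym} (each commutator is $\sim_s$-related to the identity), and using that corollary again for the containment $\overline{T} \subseteq [S,S]T$. The only cosmetic difference is in (3), where you verify $ar \sim_s rt \in T$ directly via a cyclic permutation, while the paper gets $ts \in \overline{T}$ from $ts \sim_s st$ together with part (1); both are instances of the same congruence argument.
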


\begin{proof}
(1) Clearly $\overline{T} \subseteq \overline{\overline{T}}$. The reverse inclusion follows from the transitivity of $\sim_s$.

(2) Suppose that $T$ is a subsemigroup of $S$, and let $s,t \in \overline{T}$. Then $s \sim_s s'$ and $t \sim_s t'$ for some $s',t' \in T$. Hence $st \sim_s s't' \in T$, by Lemma~\ref{Leroy-lemma2}(2), and so $st \in \overline{T}$. Thus $\overline{T}$ is a subsemigroup.

(3) Suppose that $T$ is a left ideal, and let $s \in S$ and $t \in \overline{T}$. Also let $t' \in T$ be such that $t \sim_s t'$. Then, by Theorem~\ref{least-comm}, $st \sim_s st' \in T$, and hence $st \in \overline{T}$. Since $ts \sim_s st$, we also have $ts \in \overline{T}$, by (1). Thus $\overline{T}$ is a two-sided ideal. 

The right ideal version is entirely analogous, and the two-sided ideal version follows immediately from the one-sided ones.

(4) Suppose that $S$ is an inverse semigroup, and $T$ is an inverse subsemigroup of $S$. By (2), $\overline{T}$, is a subsemigroup of $S$. Now let $s \in \overline{T}$. Then $s \sim_s t$ for some $t \in T$. Hence $s^{-1} \sim_s t^{-1} \in T$, by Lemma~\ref{inv-semi-lem}(2), and so $s^{-1} \in \overline{T}$. Thus $\overline{T}$ is an inverse subsemigroup.

(5) Suppose that $S$ is a group, and $T$ is a subgroup of $S$. Then $\overline{T}$ is an inverse subsemigroup of $S$, by (4), and hence a subgroup. As a subgroup, $\overline{T}$ contains $1$, and hence also the commutator subgroup $[S,S]$ of $S$, since $s \sim_s 1$ for all $s \in [S,S]$. On the other hand, any subgroup of $S$ that contains $T$ and $[S,S]$ must contain $[S,S]T$, and hence also $\overline{T}$, by Corollary~\ref{group-sym}. Thus $\overline{T}$ is precisely the subgroup of $S$ generated by $T$ and $[S,S]$. Finally, any subgroup of $S$ containing $[S,S]$ is necessarily normal (see, e.g.,~\cite[\S 5.4, Proposition 7]{DF}).
\end{proof}

The relations in Definitions~\ref{sim-def1} and~\ref{sim-def2} do not generally have the properties above, other than (1), since each reduces to group-conjugacy in any group, and this relation does not preserve sub(semi)groups or one-sided ideals, as the next example shows.

\begin{example}
Let $\Omega$ be a set of cardinality at least $2$, $G_{\Omega}$ the free group on $\Omega$, and $\alpha \in \Omega$. Also let $\approx$ denote any of the relations from Definitions~\ref{sim-def1} and~\ref{sim-def2}. It is easy to see that each of those reduces to group-conjugacy in any group, and so $\approx$ is simply group-conjugacy in $G_{\Omega}$.

Let $H = \langle \alpha \rangle$ be the subgroup of $G_{\Omega}$ generated by $\alpha$, and let $\overline{H}$ denote the $\approx$-closure of $H$. Then
\[\overline{H} = \{s\alpha^ns^{-1} \mid n \in \Z, s \in G_{\Omega}\}.\]
Now take $\beta \in \Omega \setminus \{\alpha\}$. Then $\beta\alpha\beta^{-1}, \beta^2\alpha\beta^{-2} \in \overline{H}$, but
\[\beta\alpha\beta^{-1}\beta^2\alpha\beta^{-2} = \beta\alpha\beta\alpha\beta^{-2} \notin \overline{H}.\]
So $\overline{H}$ is not a subsemigroup (or inverse subsemigroup or subgroup) of $G_{\Omega}$.

Next let $H = \{s\alpha \mid s \in G_{\Omega}\}$ be the left ideal generated by $\alpha$, and let $\overline{H}$ denote the $\approx$-closure of $H$. Then 
\[\overline{H} = \{ts\alpha t^{-1} \mid n \in \Z, s, t \in G_{\Omega}\}.\]
The same argument as before shows that $\overline{H}$ is not a subsemigroup, and hence is not left ideal, of $G_{\Omega}$.

Similar considerations show that the closure of a right ideal of $G_{\Omega}$ under $\approx$ is generally not itself a right ideal.
\end{example}

In contrast to the case of one-sided ideals, it is easy to see that every ideal in any semigroup is closed under $\sim_n$ (see~\cite{K}). On the other hand, it is not hard to show that in a noncommutative free semigroup the closure of an ideal under 
$\sim^1_p \, = \, \sim_p \, = \, \sim_w \, = \, \sim_o \, = \, \sim_c$ (see Section~\ref{free-sect} for more details) is generally not an ideal.

\section{Free Semigroups} \label{free-sect}

With the exception of the appendix on semigroup rings, the remainder of this paper is primarily devoted to classifying $\sim_s$, and also $\sim_p$, in cases where it has not been completely described previously, (as well as $\sim_s^1$ and $\sim_p^1$, when that is convenient) in various classes of semigroups. Our main purpose is to compare $\sim_s$ to the different relations defined in Section~\ref{def-sect}, exhibit various properties of $\sim_s$, and to demonstrate methods for computing $\sim_s$, using its relationship to $\sim_p$ and the fact that it is a congruence. We begin with free semigroups.

It is shown in~\cite[Theorem 2.2]{AKM} that $\sim^1_p \, = \, \sim_c \, = \, \sim_o$ in any free semigroup, and hence $\sim_w$ agrees with those relations as well, by Proposition~\ref{conj-compar}. It is also easy to see that $\sim_n$ is the identity relation in any free semigroup.

By Corollary~\ref{img-cor}, if $S$ is a semigroup, $T$ is a commutative semigroup, and $f: S \to T$ is a homomorphism, then $s \sim_s t$ implies that $f(s)=f(t)$, for all $s,t \in S$. Statement (2) in the next proposition can be interpreted to mean that $\sim_s$ is the largest equivalence relation with this property, that is definable on all semigroups.

\begin{proposition} \label{free-semi-prop}
Let $\, \Omega$ be a nonempty set, $F_{\Omega}$ the free semigroup on $\, \Omega$, and $C_{\Omega}$ the free commutative semigroup on $\, \Omega$. Then the following hold.
\begin{enumerate}
\item[$(1)$] In $F_{\Omega}$, $\sim_p^1 \, = \, \sim_p \, \subseteq \, \sim_s$, and the inclusion is strict if and only if $\, |\Omega| \geq 2$.
\item[$(2)$] Let $f : F_{\Omega} \to C_{\Omega}$ the semigroup homomorphism induced by letting $f(\alpha)=\alpha$ for each $\alpha \in \Omega$. Then $f(s)=f(t)$ if and only if $s \sim_s t$ if and only if $s \sim_s^1 t$, for all $s,t \in F_{\Omega}$.
\end{enumerate}
\end{proposition}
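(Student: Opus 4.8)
The plan is to treat the two parts in turn, leaning on Theorem~\ref{least-comm} and Corollary~\ref{img-cor}. For (1), the equality $\sim_p^1 \, = \, \sim_p$ is immediate from the quoted result of~\cite{AKM} together with Proposition~\ref{conj-compar}: since $\sim_p^1 \, \subseteq \, \sim_p \, \subseteq \, \sim_w \, \subseteq \, \sim_o \, = \, \sim_p^1$ in any free semigroup, all of these relations coincide. The containment $\sim_p \, \subseteq \, \sim_s$ holds in every semigroup. For the strictness, I would argue both directions. If $|\Omega|=1$, then $F_\Omega$ is commutative, so by Corollary~\ref{univ-rel-cor}(1) both $\sim_p$ and $\sim_s$ are the identity relation, and the inclusion is an equality. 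If $|\Omega| \geq 2$, I would fix distinct $\alpha, \beta \in \Omega$ and exhibit the words $\alpha\alpha\beta\beta$ and $\alpha\beta\alpha\beta$: they have the same multiset of letters, so they are $\sim_s$-related (by part (2), or directly, swapping an appropriate pair of positions gives $\alpha\alpha\beta\beta \sim_s^1 \alpha\beta\alpha\beta$). To see they are not $\sim_p$-related, recall $\sim_p \, = \, \sim_p^1$ here and check the finitely many factorizations $\alpha\alpha\beta\beta = pr$ with $p,r \in S^1$ directly: none of the resulting words $rp$ equals $\alpha\beta\alpha\beta$. Hence $\sim_p \, \subsetneq \, \sim_s$.

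For (2), I would close a cycle of implications. The implication $s \sim_s^1 t \Rightarrow s \sim_s t$ is immediate, since $\sim_s$ is the transitive closure of $\sim_s^1$. The implication $s \sim_s t \Rightarrow f(s)=f(t)$ follows from Corollary~\ref{img-cor}, because $C_\Omega$ is commutative. It therefore remains to prove $f(s)=f(t) \Rightarrow s \sim_s^1 t$, which is the heart of the statement.

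This last step, although it is the crux, turns out to be short in the free setting, and the key observation is that the generators may themselves serve as the factors in the definition of $\sim_s^1$. Concretely, writing $s = a_1 \cdots a_n$ with each $a_i \in \Omega$, the equality $f(s)=f(t)$ says exactly that $s$ and $t$ have the same letters with the same multiplicities; hence $t$ has the same length $n$ and is a rearrangement of the letters of $s$, say $t = a_{g(1)} \cdots a_{g(n)}$ for some $g \in \B(\{1,\dots,n\})$. Taking $p_i = a_i \in S^1$ and $f = g$ in Definition~\ref{symm-conj-def} yields $s = p_1 \cdots p_n$ and $t = p_{g(1)} \cdots p_{g(n)}$, so $s \sim_s^1 t$ in a single step. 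The main conceptual point, then, is to recognize that in $F_\Omega$ one can always decompose a word into its individual letters and read off the permutation witnessing $\sim_s^1$; the only mild care needed is in the strictness argument of (1), where one must verify—using $\sim_p \, = \, \sim_p^1$—that two words of equal content can genuinely fail to be primarily conjugate.
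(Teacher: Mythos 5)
Your proof is correct and follows essentially the same route as the paper's: the same use of \cite[Theorem 2.2]{AKM} with Proposition~\ref{conj-compar} to get $\sim_p^1 \, = \, \sim_p$, the same witness pair $\alpha^2\beta^2$ and $\alpha\beta\alpha\beta$ for strictness (where your explicit check of all factorizations fills in a detail the paper only asserts), and the same letter-by-letter decomposition to show $f(s)=f(t)$ implies $s \sim_s^1 t$ in part (2).
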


\begin{proof}
(1) Clearly, $\sim_p^1 \,  \subseteq \, \sim_p \, \subseteq \, \sim_s$. It is shown in~\cite[Theorem 2.2]{AKM} that $\sim^1_p \, = \, \sim_o$ in $F_{\Omega}$. Since $\sim^1_p \, \subseteq \, \sim_p \, \subseteq \, \sim_o$ in any semigroup, by Proposition~\ref{conj-compar}, it follows that $\sim_p^1 \, = \, \sim_p$ in $F_{\Omega}$. Now, if $|\Omega| = 1$, and $F_{\Omega}$ is therefore commutative, then each of $\sim_p^1$, $\sim_p$, and $\sim_s$ is the identity relation (see Corollary~\ref{univ-rel-cor}(1)). In particular, $\sim_p^1 \, = \, \sim_p \, = \, \sim_s$.

Next suppose that $|\Omega| \geq 2$, and let $\alpha, \beta \in \Omega$ be distinct. Then $\alpha^2\beta^2 \not\sim_p^1 \alpha \beta \alpha \beta$, while $\alpha^2\beta^2 \sim_s \alpha \beta \alpha \beta$. Therefore $\sim_p^1 \, = \, \sim_p \, \subsetneq \, \sim_s$ in this case.

(2) Let $n \in \Z^+$, $p_1, \dots, p_n \in F_{\Omega}^1$, and $g \in \B(\{1, \dots, n\})$. Then, clearly, $f(p_1\cdots p_{n}) = f(p_{g(1)}\cdots p_{g(n)})$. It follows that if $s \sim_s^1 t$ or $s \sim_s t$, for some $s,t \in F_{\Omega}$, then $f(s) = f(t)$.

Conversely, suppose that $f(s) = f(t)$ for some $s,t \in F_{\Omega}$. Write $s = \alpha_1\cdots \alpha_n$ for some $\alpha_1, \dots, \alpha_n \in \Omega$. Then, by the definition of $f$, we have $t = \alpha_{g(1)}\cdots \alpha_{g(n)}$ for some $g \in \B(\{1, \dots, n\})$, and so $s \sim_s^1 t$, which implies that $s \sim_s t$ also.
\end{proof}

This proposition gives another example of a semigroup where $\sim_p \, \neq \, \sim_s$, $\sim_p^1 \, \neq \, \sim_s^1$, and $\sim_s^1 \, \not\subseteq \, \sim_p$ (see Example~\ref{group-eg}).

The next result explains why a relation smaller than $\sim_p^1$ typically does not generate a commutative congruence on a semigroup.

\begin{proposition} \label{free-small-gen}
Let $\, \Omega$ be a nonempty set, $F_{\Omega}$ the free semigroup on $\, \Omega$, and $\, \approx$ a reflexive symmetric relation on $F_{\Omega}$ such that $\, \approx \, \subseteq \, \sim_s$. Then the congruence generated by $\, \approx$ is $\sim_s$ if and only if $\alpha\beta \approx \beta\alpha$ for all $\alpha, \beta \in \Omega$.
\end{proposition}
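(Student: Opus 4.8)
The plan is to prove the two implications separately, using the characterization from Proposition~\ref{free-semi-prop}(2) that $\sim_s$ on $F_\Omega$ is exactly the relation identifying words with the same multiset of letters (equivalently, $f(s)=f(t)$ for the abelianization map $f\colon F_\Omega\to C_\Omega$).

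For the \emph{forward} direction, suppose the congruence $\rho$ generated by $\approx$ equals $\sim_s$. Since for any distinct $\alpha,\beta\in\Omega$ we have $\alpha\beta\sim_s\beta\alpha$ (both abelianize to the same element of $C_\Omega$, or directly $\alpha\beta\sim_s^1\beta\alpha$), and since $\rho=\sim_s$, it follows immediately that $\alpha\beta \mathrel{\rho} \beta\alpha$. The only subtlety is that $\rho$ is the \emph{congruence} generated by $\approx$, whereas the statement asserts $\alpha\beta\approx\beta\alpha$ for the original relation $\approx$ itself. So the real content is: I must argue that the generating pairs $\alpha\beta,\beta\alpha$ already lie in $\approx$, not merely in the congruence it generates. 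I expect this to rest on a length/indecomposability argument: in $F_\Omega$ the element $\alpha\beta$ has length $2$, and any nontrivial application of a congruence-generating step that could link it to $\beta\alpha$ must—because $F_\Omega$ is free—come from a generating pair of $\approx$ whose two sides, after cancellation of common free prefixes and suffixes, reduce to $\alpha\beta$ and $\beta\alpha$. I would make this precise by analyzing how a one-step congruence transition $u\,x\,v \mathrel{\rho_1} u\,y\,v$ (with $x\approx y$) can produce $\alpha\beta\mapsto\beta\alpha$ directly, forcing $\{x,y\}=\{\alpha\beta,\beta\alpha\}$ by freeness.

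For the \emph{reverse} direction, assume $\alpha\beta\approx\beta\alpha$ for all $\alpha,\beta\in\Omega$. Let $\rho$ be the congruence generated by $\approx$. Since $\approx\,\subseteq\,\sim_s$ by hypothesis and $\sim_s$ is a congruence (Theorem~\ref{least-comm}), we get $\rho\,\subseteq\,\sim_s$ automatically. For the reverse inclusion $\sim_s\,\subseteq\,\rho$, I would invoke Proposition~\ref{free-semi-prop}(2) to reduce to showing that any two words with equal letter-multisets are $\rho$-related. Since such words differ by a permutation of their letters, and every permutation is a product of adjacent transpositions, it suffices to show that swapping two adjacent letters is a $\rho$-step: given a word $w\,\gamma\delta\,w'$ with $\gamma,\delta\in\Omega$, the generating relation $\gamma\delta\approx\delta\gamma$ together with the congruence property (multiplying on the left by $w$ and on the right by $w'$) yields $w\gamma\delta w' \mathrel{\rho} w\delta\gamma w'$. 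Transitivity of $\rho$ then bridges any two words sharing a letter-multiset, giving $\sim_s\,\subseteq\,\rho$ and hence equality.

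The main obstacle is the forward direction's need to pin the generating pairs down inside $\approx$ itself rather than in the generated congruence; the reverse direction is essentially the adjacent-transposition bubble-sort argument that already underlies Proposition~\ref{free-semi-prop} and Proposition~\ref{Leroy-lemma}. I would therefore spend the bulk of the write-up on the freeness argument establishing that $\alpha\beta\approx\beta\alpha$ must hold at the level of the original relation, likely phrased as: since $\alpha\beta$ and $\beta\alpha$ are distinct words both of length two and $F_\Omega$ admits no length-decreasing relations, any $\approx$-generated chain connecting them must contain a single step whose endpoints \emph{are} $\alpha\beta$ and $\beta\alpha$.
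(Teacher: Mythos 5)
Your proposal is correct, and in the hard (forward) direction it is essentially the paper's own argument: both express membership in the generated congruence as a chain of elementary transitions $p_i = r_i s_i t_i \mapsto p_{i+1} = r_i s_i' t_i$ with $s_i \approx s_i'$ (the paper cites Howie's Proposition 1.5.9 for this), both use $\approx\,\subseteq\,\sim_s$ together with Proposition~\ref{free-semi-prop}(2) to control the chain, and both use freeness to enumerate the possible factorizations of the length-two word $\alpha\beta$ and force the generating pair to be $(\alpha\beta,\beta\alpha)$ itself. The only organizational difference is that the paper argues by contradiction---assuming $\alpha\beta \not\approx \beta\alpha$, it shows each elementary step fixes the word, so the chain never leaves $\alpha\beta$---whereas you locate the first step at which the chain switches to $\beta\alpha$; these are interchangeable. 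In the reverse direction you diverge mildly: you re-derive $\sim_s\,\subseteq\,\rho$ by an explicit adjacent-transposition (bubble-sort) argument, while the paper gets it in one line from Theorem~\ref{least-comm}: since the generators in $\Omega$ commute modulo $\rho$, the quotient $F_{\Omega}/\rho$ is commutative, and minimality of $\sim_s$ among commutative congruences gives $\sim_s\,\subseteq\,\rho$. Both are valid; the paper's route is shorter and reuses the main theorem, yours is self-contained. Two small points to tidy in a full write-up: (i) the case $\alpha=\beta$ (and $|\Omega|=1$, which the paper treats separately) should be dispatched by reflexivity of $\approx$ before the freeness analysis, since that analysis presumes $\alpha\beta$ and $\beta\alpha$ are distinct words; (ii) what actually confines the chain is preservation of the letter multiset (each step stays inside one $\sim_s$-class, which for $\alpha\beta$ is $\{\alpha\beta,\beta\alpha\}$), not merely the absence of length-decreasing relations, so you should phrase that step via Proposition~\ref{free-semi-prop}(2) rather than via length alone.
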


\begin{proof}
Let $\rho$ be the congruence generated by $\approx$. Then $\rho \subseteq \, \sim_s$, since $\sim_s$ is a congruence, by Theorem~\ref{least-comm}. If $|\Omega| = 1$, then $F_{\Omega}$ is commutative, and so $\sim_s$ (see Corollary~\ref{univ-rel-cor}(1)), as well as $\approx$ (given that it is reflexive), is simply equality. Thus the claim holds trivially in this case, and so we may assume that $|\Omega| \geq 2$.

Suppose that $\alpha\beta \approx \beta\alpha$ for all $\alpha, \beta \in \Omega$. Since $\Omega$ generates $F_{\Omega}$ as a semigroup, the quotient $F_{\Omega}/\rho$ must be commutative, which implies that $\sim_s \, \subseteq \rho$, by Theorem~\ref{least-comm}, and so $\sim_s \, = \rho$.

Conversely, suppose that $\sim_s \, =  \rho$, but $\alpha\beta \not\approx \beta\alpha$ for some $\alpha, \beta \in \Omega$. Since $\alpha\beta \sim_s \beta\alpha$, there must exist $p_1, \dots, p_n \in F_{\Omega}$ such that $p_1 = \alpha\beta$, $p_n = \beta\alpha$, and each $p_i$ is connected to $p_{i+1}$ via an elementary $\approx$-transition. (See~\cite[Proposition 1.5.9]{H}.) Given that $\approx$ is symmetric, this means that for each $i<n$ there exist $r_i, t_i \in F_{\Omega}^1$ and $s_i,s_i' \in  F_{\Omega}$, such that $s_i \approx s_i'$, $p_i = r_is_it_i$, and $p_{i+1} = r_is_i't_i$. Taking $i=1$, since $p_1 = \alpha\beta = r_1s_1t_1$ and $s_1 \neq 1$, the possibilities are: 
\begin{enumerate}
\item[$($a$)$] $r_1 = \alpha$, $s_1 = \beta$, $t_1 = 1$;
\item[$($b$)$] $r_1 = 1$, $s_1 = \alpha$, $t_1 = \beta$;
\item[$($c$)$] $r_1 = 1$, $s_1 = \alpha\beta$, $t_1 = 1$.
\end{enumerate}
Since $s_1 \approx s_1'$ and $\, \approx \, \subseteq \, \sim_s$, we must have $s_1' = \beta$ in case (a), and $s_1' = \alpha$ in case (b). Moreover, since the $\sim_s$-equivalence class of $\alpha\beta$ is $\{\alpha\beta, \beta\alpha\}$, by Proposition~\ref{free-semi-prop}(2), and since we have assumed that $\alpha\beta \not\approx \beta\alpha$, we conclude that $s_1' = \alpha\beta$ in case (c). Therefore in each case $s_1 = s_1'$, and hence $p_2 = r_1s_1't_1 = p_1$. Iterating this argument, we conclude that $\alpha\beta = p_1 = p_2 = \cdots = p_n$, which contradicts $p_n = \beta\alpha$. Thus if $\sim_s \, =  \rho$, then $\alpha\beta \approx \beta\alpha$ for all $\alpha, \beta \in \Omega$.
\end{proof}

\section{Rees Matrix Semigroups} \label{rees-sect}

Let $G$ be a group, $G^0 = G \cup \{0\}$ the corresponding $0$-group, $I$ and $\Lambda$ nonempty sets, and $P = (p_{\lambda i})$ a $\Lambda \times I$ matrix (called a \emph{sandwich matrix}) with entries in $G^0$, such that no row or column consists entirely of zeros. Then $(I \times G \times \Lambda) \cup \{0\}$, with multiplication given by
\[(i,s,\lambda)(j,t,\mu) = \left\{ \begin{array}{cl}
(i,sp_{\lambda j}t,\mu) & \text{if } p_{\lambda j} \neq 0\\
0 & \text{otherwise}
\end{array}\right.\]
and
\[(i,s,\lambda)0 = 0 = 0(i,s,\lambda) = 0\cdot 0,\]
is a semigroup, called a \emph{Rees matrix semigroup}, and denoted by $\mathcal{M}^0(G; I, \Lambda; P)$. According to the Rees theorem (see, e.g.,~\cite[Theorem 3.2.3]{H}), $\mathcal{M}^0(G; I, \Lambda; P)$ is \emph{completely} $0$-\emph{simple} (i.e., it is a semigroup $S$ such that $S^2 \neq \{0\}$, $S$ and $\{0\}$ are the only ideals, and the inverse semigroup $E(S)$ of idempotents of $S$ has an element minimal in the natural partial order $\leq$), and every completely $0$-simple semigroup is of this form. (See~\cite[\S 3.2]{H} for more details.) 

It is shown in~\cite[Proposition 4.26]{AKKM1} that $\sim_c \, \subseteq \, \sim_p^1$ in $\mathcal{M}^0(G; I, \Lambda; P)$, with equality if and only if $P$ has only nonzero elements. The relation $\sim_n$ in these semigroups is classified in~\cite[Theorem 2.25]{ABKKMM}. 

We begin by giving a complete characterization of $\sim_p$ and $\sim_p^1$ in $\mathcal{M}^0(G; I, \Lambda; P)$, which will then help us describe $\sim_s$. Interestingly, here $\sim_p$ and $\sim_p^1$ coincide with $\sim_n$, except $(i,s,\lambda)$ is in a $\sim_n$-equivalence class of its own whenever $p_{\lambda i} = 0$, whereas in this situation $(i,s,\lambda) \sim_p^1 0$. In particular, in Rees matrix semigroups we generally have $\sim_n \, \subsetneq \, \sim_p^1$.

\begin{theorem} \label{rees-prim}
Let $\mathcal{M}^0(G; I, \Lambda; P)$ be a Rees matrix semigroup, with appropriate $G$, $I$, $\Lambda$, and $P$, and let $\, (i,s,\lambda), (j,t,\mu) \in \mathcal{M}^0(G; I, \Lambda; P) \setminus \{0\}$. Then the following hold.
\begin{enumerate}
\item[$(1)$] We have $\, (i,s,\lambda) \sim_p^1 (j,t,\mu)$ if and only if either $\, (i,s,\lambda) = (j,t,\mu)$, or $p_{\lambda i} \neq 0 \neq p_{\mu j}$ and $rp_{\lambda i}s = tp_{\mu j}r$ for some $r \in G$. Also $\, (i,s,\lambda) \sim_p^1 0$ if and only if $p_{\lambda i} = 0$.

\item[$(2)$] We have $\, (i,s,\lambda) \sim_p (j,t,\mu)$ if and only if either $p_{\lambda i} = 0 = p_{\mu j}$, or $p_{\lambda i} \neq 0 \neq p_{\mu j}$ and $rp_{\lambda i}s = tp_{\mu j}r$ for some $r \in G$. Also $\, (i,s,\lambda) \sim_p 0$ if and only if $p_{\lambda i} = 0$.
\end{enumerate}
\end{theorem}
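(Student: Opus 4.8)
The plan is to handle part (1) by a direct case analysis of the defining equations $pr=(i,s,\lambda)$, $rp=(j,t,\mu)$ for $\sim_p^1$, and then to deduce part (2) by recognizing the resulting key equation as a statement about group conjugacy, which renders the transitive closure transparent. For part (1), I would split according to whether the factors $p,r$ (each either the adjoined identity $1$ or an element of the semigroup) involve $1$. If $p=1$ or $r=1$, then $(i,s,\lambda)=(j,t,\mu)$ outright, giving the equality disjunct. Otherwise both lie in $\mathcal{M}^0(G;I,\Lambda;P)$ and are nonzero (since their products are nonzero); writing $p=(a,x,\nu)$, $r=(b,y,\kappa)$ and applying the Rees multiplication, matching row indices, column indices, and group coordinates in $pr$ and $rp$ forces $a=i$, $\kappa=\lambda$, $b=j$, $\nu=\mu$, together with $p_{\mu j}\neq 0\neq p_{\lambda i}$ and the pair of equations $s=x p_{\mu j} y$, $t=y p_{\lambda i} x$. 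The remaining purely group-theoretic task is to show this two-variable system (in $x,y\in G$) is solvable exactly when there is a single $r\in G$ with $rp_{\lambda i}s=tp_{\mu j}r$; this follows by elimination, setting $y=r$, solving $x=s r^{-1}p_{\mu j}^{-1}$, and substituting. For $(i,s,\lambda)\sim_p^1 0$ the same product computation shows one of the two products vanishes precisely when $p_{\lambda i}=0$, and conversely, when $p_{\lambda i}=0$ I would exhibit explicit $p,r$ (using any nonzero entry of $P$, which exists since no row of $P$ is zero and $\Lambda,I$ are nonempty) realizing $(i,s,\lambda)$ and $0$ as the two products.

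For part (2), the crucial reinterpretation is that $rp_{\lambda i}s=tp_{\mu j}r$ says exactly that $p_{\lambda i}s$ and $tp_{\mu j}$ are conjugate in $G$. Combined with the standard fact that $ab$ and $ba$ are always conjugate, this means that on the set $N$ of nonzero elements with $p_{\lambda i}\neq 0$ the relation $\sim_p^1$ coincides with the fibers of the invariant $\phi(i,s,\lambda)=[\,p_{\lambda i}s\,]$, the conjugacy class of $p_{\lambda i}s$. Since equality of $\phi$-values is an equivalence relation, $\sim_p^1$ is already transitive on $N$. To see $\sim_p$ agrees with $\sim_p^1$ there, I would note from part (1) that an element of $N$ is never $\sim_p^1$-related to $0$ nor to an element with zero diagonal entry, so $N$ is closed under $\sim_p^1$-steps and every $\sim_p$-chain between elements of $N$ stays inside $N$. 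Finally, each nonzero element with $p_{\lambda i}=0$ is $\sim_p^1 0$ by part (1), so all such elements together with $0$ form a single $\sim_p$-class; this yields both the $p_{\lambda i}=0=p_{\mu j}$ disjunct and the equivalence $(i,s,\lambda)\sim_p 0\iff p_{\lambda i}=0$.

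I expect the main obstacle to be the bookkeeping in part (1): correctly tracking how the Rees multiplication constrains the six index and coordinate data of $p$ and $r$, and then carrying out the elimination that converts the symmetric-looking system $s=x p_{\mu j}y$, $t=y p_{\lambda i}x$ into the single conjugacy equation $rp_{\lambda i}s=tp_{\mu j}r$. Once that equation is in hand, part (2) is essentially formal, since the conjugacy reinterpretation and the closure of $N$ under $\sim_p^1$ do all the work; there remains only the verification that the disjuncts in the statement match the class structure, namely $N$ partitioned by $\phi$ and the set $\{0\}$ together with the zero-diagonal elements forming one class.
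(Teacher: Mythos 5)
Your proposal is correct, and part (1) is essentially the paper's own proof: the same case split on whether a factor equals the adjoined identity, the same Rees-multiplication bookkeeping forcing $p=(i,x,\mu)$, $r=(j,y,\lambda)$ with $p_{\lambda i}\neq 0\neq p_{\mu j}$ and $s=xp_{\mu j}y$, $t=yp_{\lambda i}x$, the same elimination down to the single equation $rp_{\lambda i}s=tp_{\mu j}r$, and the same use of an arbitrary nonzero entry $p_{\nu k}$ of $P$ to witness $(i,s,\lambda)\sim_p^1 0$ when $p_{\lambda i}=0$ (the paper's explicit factorization is $(i,s,\lambda)=(i,sp_{\nu k}^{-1},\nu)(k,1,\lambda)$, whose reverse product is $0$). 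In part (2) you deviate in one step: where the paper's main argument cites \cite[Theorem 2.25]{ABKKMM} to identify $\sim_p^1$ with $\sim_n$ on the elements with nonzero diagonal entry, and thereby concludes that $\sim_p^1$ is an equivalence relation there, you instead observe that $rp_{\lambda i}s=tp_{\mu j}r$ says precisely that $p_{\lambda i}s$ is conjugate to $tp_{\mu j}$, hence (since $ab$ and $ba$ are always conjugate in a group) that $\sim_p^1$ restricted to these elements is the fiber relation of the invariant $(i,s,\lambda)\mapsto [\,p_{\lambda i}s\,]$, making reflexivity, symmetry, and transitivity immediate. This is exactly the route the paper gestures at with ``alternatively, one can show directly that the relation \ldots\ is transitive,'' but actually carried out, and it is arguably cleaner: it is self-contained and names an explicit conjugacy-class invariant indexing the nonzero $\sim_p$-classes. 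Your closure argument (the nonzero-diagonal elements form a set closed under $\sim_p^1$-steps, so $\sim_p$-chains can neither leave it nor reach $0$) is the same content as the paper's induction along a chain ending at $0$, just read from the other end. What the paper's citation buys is the explicit link between $\sim_p$ and $\sim_n$ remarked on before the theorem; what your route buys is independence from the $\sim_n$ classification.
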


\begin{proof}
(1) First, suppose that $p_{\lambda i} = 0$. By the definition of the sandwich matrix, we can find $k \in I$ and $\nu \in \Lambda$ such that $p_{\nu k} \neq 0$. Then taking $r = sp_{\nu k}^{-1} \in G$, we have
\[(i,s,\lambda) = (i,rp_{\nu k},\lambda) = (i,r,\nu)(k,1,\lambda) \text{ and }(k,1,\lambda)(i,r,\nu) = 0.\] 
Therefore $(i,s,\lambda) \sim_p^1 0$.

Conversely, suppose that $(i,s,\lambda) \sim_p^1 0$. Then there exist $r_1, r_2 \in G$, $\nu \in \Lambda$, and $k \in I$ such that 
\[(i,s,\lambda) = (i,r_1,\nu)(k,r_2,\lambda) \text{ and } (k,r_2,\lambda)(i,r_1,\nu) = 0.\] 
It follows from the second equation that $p_{\lambda i} = 0$, which proves the second claim in (1).

The last computation also shows that if $p_{\lambda i} = 0$, then $(i,s,\lambda)$ and $0$ are the only elements of $\mathcal{M}^0(G; I, \Lambda; P)$ that are $\sim_p^1$-related to $(i,s,\lambda)$. Therefore if $(i,s,\lambda) \sim_p^1 (j,t,\mu)$, then either $(i,s,\lambda) = (j,t,\mu)$ or $p_{\lambda i} \neq 0 \neq p_{\mu j}$.

For the remainder of the proof of (1), let us assume that $p_{\lambda i} \neq 0 \neq p_{\mu j}$. We shall complete the argument by showing that in this case, $(i,s,\lambda) \sim_p^1 (j,t,\mu)$ if and only if $rp_{\lambda i}s = tp_{\mu j}r$ for some $r \in G$.

Given that $p_{\lambda i} \neq 0 \neq p_{\mu j}$, we have $(i,s,\lambda) \sim_p^1 (j,t,\mu)$ if and only if there exist $r_1, r_2 \in G$ such that 
\[(i,s,\lambda) = (i,r_1,\mu)(j,r_2,\lambda) = (i,r_1p_{\mu j}r_2,\lambda)\]
and
\[(j,t,\mu) = (j,r_2,\lambda)(i,r_1,\mu) = (j,r_2p_{\lambda i}r_1,\mu).\]
This is the case if and only if there exist $r_1, r_2 \in G$ such that $s = r_1p_{\mu j}r_2$ and $t = r_2p_{\lambda i}r_1$. Rearranging these equations gives $r_1 = sr_2^{-1}p_{\mu j}^{-1}$ and $r_2 = tr_1^{-1}p_{\lambda i}^{-1}$, respectively. Substituting these into $t = r_2p_{\lambda i}r_1$ and $s = r_1p_{\mu j}r_2$, respectively, gives $r_2p_{\lambda i}s = tp_{\mu j}r_2$ and $r_1p_{\mu j}t = s p_{\lambda i} r_1$. Since this computation is reversible, we conclude that $(i,s,\lambda) \sim_p^1 (j,t,\mu)$ if and only if there exist $r_1, r_2 \in G$ such that $r_2p_{\lambda i}s = tp_{\mu j}r_2$ and $r_1p_{\mu j}t = s p_{\lambda i} r_1$. It is easy to see that satisfying one of these equations implies satisfying the other, and so only one is needed. Thus $(i,s,\lambda) \sim_p^1 (j,t,\mu)$ if and only if $rp_{\lambda i}s = tp_{\mu j}r$ for some $r \in G$, as claimed.

(2) If $p_{\lambda i} = 0$, then $(i,s,\lambda) \sim_p 0$, by (1). Conversely, suppose that $(i,s,\lambda) \sim_p 0$. Then there exist $q_1, \dots, q_n \in \mathcal{M}^0(G; I, \Lambda; P)$ such that
\[(i,s,\lambda) \sim_p^1 q_1 \sim_p^1 q_2 \sim_p^1 \cdots \sim_p^1 q_n \sim_p^1 0,\]
where we may assume that each $q_i \neq 0$. By (1), $q_n \sim_p^1 0$ implies that $q_{n-1} = q_n$. It follows inductively that $(i,s,\lambda) = q_1 = \cdots = q_n$, and hence  $(i,s,\lambda) \sim_p^1 0$. Therefore $p_{\lambda i} = 0$, by (1). This proves the second claim in (2).

Next, by (1), since $\sim_p$ is transitive, if $p_{\lambda i} = 0 = p_{\mu j}$, then $(i,s,\lambda) \sim_p (j,t,\mu)$. Moreover, by the previous paragraph, it cannot be the case that $(i,s,\lambda) \sim_p (j,t,\mu)$, and exactly one of $p_{\lambda i}$ and $p_{\mu j}$ is $0$. Therefore to conclude the proof it suffices to assume that $p_{\lambda i} \neq 0 \neq p_{\mu j}$, and show that in this case, $(i,s,\lambda) \sim_p (j,t,\mu)$ if and only if $rp_{\lambda i}s = tp_{\mu j}r$ for some $r \in G$.

Given that $p_{\lambda i} \neq 0 \neq p_{\mu j}$, by (1) and~\cite[Theorem 2.25]{ABKKMM},  we have $(i,s,\lambda) \sim_p^1 (j,t,\mu)$ if and only if $(i,s,\lambda) \sim_n (j,t,\mu)$. This implies that $\sim_p^1$ is an equivalence relation on elements $(i,s,\lambda) \in \mathcal{M}^0(G; I, \Lambda; P)$ with $p_{\lambda i} \neq 0$, and therefore $\sim_p^1 \, = \, \sim_p$ in this situation. Alternatively, one can show directly that the relation $rp_{\mu j}t = s p_{\lambda i} r$ for some $r \in G$ (and $r'p_{\mu j}t = s p_{\lambda i} r'$ for some $r' \in G$), on elements $(i,s,\lambda), (j,t,\mu) \in \mathcal{M}^0(G; I, \Lambda; P)$, is transitive. So if $(i,s,\lambda) \sim_p (j,t,\mu)$, then $rp_{\lambda i}s = tp_{\mu j}r$ for some $r \in G$. Hence, in the case where $p_{\lambda i} \neq 0 \neq p_{\mu j}$, by (1), we have $(i,s,\lambda) \sim_p (j,t,\mu)$ if and only if $rp_{\lambda i}s = tp_{\mu j}r$ for some $r \in G$, as desired.
\end{proof}

Using the previous result we can construct an example of a semigroup where $\sim_p \, \not\subseteq \, \sim_s^1$, and hence also $\sim_s^1 \, \neq \, \sim_s$ (and $\sim_p^1 \, \neq \, \sim_p$).

\begin{example} \label{rees-eg}
Let $G$ be any group that is not equal to its commutator subgroup $[G,G]$ (e.g., a noncommutative free group), let $s,t \in G$ be such that $s\not\sim_s t$ in $G$ (which exist, by Corollary~\ref{group-sym}), let $I = \Lambda = \{1,2\}$, and let 
\[P =\left(\begin{array}{cc}
0 & 1 \\
1 & 0 \\
\end{array}\right).\]
Then, by Theorem~\ref{rees-prim}(1), $(1,s,1)\sim_p^1 0 \sim_p^1 (1,t,1)$, and so $(1,s,1) \sim_p (1,t,1)$, in the semigroup $\mathcal{M}^0(G; I, \Lambda; P)$.

Next suppose that $(1,s,1) \sim_s^1 (1,t,1)$. Then there exist $(a_i, p_i, b_i) \in \mathcal{M}^0(G; I, \Lambda; P)$ and $f \in \B(\{1,\dots, n\})$, where $i \in \{1, \dots, n\}$, such that
\[(1,s,1) = (a_1, p_1, b_1) \cdots (a_n, p_n, b_n) = (a_1, p_1\cdots p_n, b_n)\]
and
\[(1,t,1) = (a_{f(1)}, p_{f(1)}, b_{f(1)}) \cdots (a_{f(n)}, p_{f(n)}, b_{f(n)}) = (a_{f(1)}, p_{f(1)} \cdots p_{f(n)}, b_{f(n)})\]
(using the fact that each entry in $P$ is either $0$ or $1$). In particular, $s = p_1\cdots p_n$ and $t = p_{f(1)} \cdots p_{f(n)}$, which implies that $s \sim_s t$ in $G$, contrary to hypothesis. Thus $(1,s,1) \not\sim_s^1 (1,t,1)$.
\end{example}

Next we use Theorem~\ref{rees-prim}, along with the fact that $\sim_s$ is a congruence, to describe this relation in Rees matrix semigroups. We begin with the case where the sandwich matrix has at least one zero.

\begin{corollary} \label{rees-0-sym}
Let $\mathcal{M}^0(G; I, \Lambda; P)$ be a Rees matrix semigroup, with appropriate $G$, $I$, $\Lambda$, and $P$. If $P$ has any $\, 0$ entries, then $\sim_s$ is the universal relation on $\mathcal{M}^0(G; I, \Lambda; P)$.
\end{corollary}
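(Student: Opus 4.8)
The plan is to show that under the hypothesis that $P$ has a zero entry, $\mathcal{M}^0(G; I, \Lambda; P)$ has no nontrivial commutative homomorphic image, and then invoke Corollary~\ref{univ-rel-cor}(2) to conclude that $\sim_s$ is universal. Equivalently, and more directly, I would show that every element of the semigroup is $\sim_s$-related to $0$, since $0$ is an absorbing element and $\sim_s$ is a congruence; once every element collapses to $0$, the relation is forced to be universal.

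The key observation to exploit is Theorem~\ref{rees-prim}(1): if $p_{\lambda i} = 0$, then $(i,s,\lambda) \sim_p^1 0$, and hence $(i,s,\lambda) \sim_s 0$ since $\sim_p^1 \,\subseteq\, \sim_s$. So the first step is to note that \emph{some} nonzero element, say $(k, u, \nu)$ with $p_{\nu k} = 0$ (which exists precisely because $P$ has a zero entry), satisfies $(k,u,\nu) \sim_s 0$. The main work is then to propagate this single collapse to all of $\mathcal{M}^0(G; I, \Lambda; P)$ using congruence properties. Here is where I would use that $\sim_s$ is a congruence (Theorem~\ref{least-comm}, or Lemma~\ref{Leroy-lemma2}(2)): given any nonzero $(i,s,\lambda)$, I would multiply the relation $(k,u,\nu) \sim_s 0$ on the left and/or right by suitably chosen elements to transport it to $(i,s,\lambda) \sim_s 0$. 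Concretely, one picks elements $a, b \in \mathcal{M}^0(G;I,\Lambda;P)$ such that $a(k,u,\nu)b = (i,s,\lambda)$ while $a\,0\,b = 0$; applying the congruence then yields $(i,s,\lambda) \sim_s 0$. The existence of such $a$ and $b$ follows from the Rees structure together with the no-zero-row/no-zero-column condition on $P$, which guarantees enough nonzero sandwich entries to move between any prescribed index pairs.

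Once every nonzero element satisfies $(i,s,\lambda) \sim_s 0$, transitivity of $\sim_s$ gives $(i,s,\lambda) \sim_s (j,t,\mu)$ for every pair of nonzero elements, and combined with $(i,s,\lambda) \sim_s 0$ this shows that all elements lie in a single $\sim_s$-class, i.e., $\sim_s$ is universal.

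The main obstacle I anticipate is the bookkeeping in the propagation step: arranging the left and right multipliers $a, b$ so that the product lands on the desired coordinates $(i, s, \lambda)$ with the correct group entry, while ensuring the sandwich products along the way are nonzero (so that no spurious collapse to $0$ interferes with the coordinate computation). This requires carefully tracking how multiplication in $\mathcal{M}^0(G; I, \Lambda; P)$ transforms indices and group entries, and using the no-zero-row and no-zero-column hypotheses on $P$ to select index witnesses at which the sandwich matrix is nonzero. The alternative route through Corollary~\ref{univ-rel-cor}(2) may actually streamline this, since it replaces the explicit multiplier construction with the cleaner assertion that any commutative quotient must be trivial; but either way, the Rees multiplication rule must be unwound to verify the collapse spreads across all index pairs.
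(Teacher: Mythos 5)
Your proposal is correct, and its skeleton matches the paper's: both start from Theorem~\ref{rees-prim}(1) to collapse a single nonzero element $(k,u,\nu)$ with $p_{\nu k}=0$ to $0$, and both then use the fact that $\sim_s$ is a congruence (Theorem~\ref{least-comm}) to spread that collapse to the whole semigroup. The difference is in how the spreading step is executed. The paper observes that the $\sim_s$-class of $0$ is an ideal (since $\sim_s$ is a congruence and $0$ is absorbing) and then simply cites complete $0$-simplicity: the only nonzero ideal of $\mathcal{M}^0(G;I,\Lambda;P)$ is the whole semigroup, so once the class of $0$ contains a nonzero element it must be everything. You instead construct explicit multipliers $a,b$ with $a(k,u,\nu)b=(i,s,\lambda)$, and this does work: take $a=(i,\, s(p_{\mu k}\,u\,p_{\nu l})^{-1},\, \mu)$ and $b=(l,1,\lambda)$, where $\mu\in\Lambda$ satisfies $p_{\mu k}\neq 0$ (column $k$ of $P$ is not all zeros) and $l\in I$ satisfies $p_{\nu l}\neq 0$ (row $\nu$ is not all zeros); then $a(k,u,\nu)b=(i,s,\lambda)$ and $a\cdot 0\cdot b=0$, which is exactly the bookkeeping you anticipated. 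In effect you are reproving, inside the Rees coordinates, that the ideal generated by any nonzero element is the whole semigroup, i.e.\ $0$-simplicity. What the paper's route buys is brevity and a conceptual explanation that works verbatim in any completely $0$-simple semigroup; what yours buys is self-containedness, since it needs only the multiplication rule and the no-zero-row/no-zero-column hypothesis on $P$, not the structural characterization supplied by the Rees theorem.
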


\begin{proof}
Since, by Theorem~\ref{least-comm}, $\sim_s$ is a congruence, the $\sim_s$-equivalence class of $0$ is an ideal of $\mathcal{M}^0(G; I, \Lambda; P)$. If $P$ has any $0$ entries, then the $\sim_p$-equivalence class of $0$, and hence also the $\sim_s$-equivalence class of $0$, contains nonzero elements, by Theorem~\ref{rees-prim}. Since $\mathcal{M}^0(G; I, \Lambda; P)$ is completely $0$-simple, the only nonzero ideal is $\mathcal{M}^0(G; I, \Lambda; P)$. Thus the $\sim_s$-equivalence class of $0$, in this case, must be $\mathcal{M}^0(G; I, \Lambda; P)$, and so $\sim_s$ is the universal relation.
\end{proof}

If the sandwich matrix $P$ has only nonzero entries, then $\mathcal{M}^0(G; I, \Lambda; P) = \mathcal{M}(G; I, \Lambda; P) \cup \{0\}$, where $\mathcal{M}(G; I, \Lambda; P)$ is the semigroup $I \times G \times \Lambda$, with multiplication given by
\[(i,s,\lambda)(j,t,\mu) = (i,sp_{\lambda j}t,\mu).\]
It is well-known that $\mathcal{M}(G; I, \Lambda; P)$ is a \emph{completely simple} semigroup (i.e., one with a minimal idempotent in the natural partial order, but no proper ideals), and every completely simple semigroup is of this form (see, e.g.,~\cite[Theorem 3.3.1]{H}).

So in describing $\sim_s$ in the case where the sandwich matrix has only nonzero entries, there is no loss in generality in working with $\mathcal{M}(G; I, \Lambda; P)$ rather than $\mathcal{M}^0(G; I, \Lambda; P)$. The only difference is that the latter semigroup has one more $\sim_s$-equivalence class, consisting of just $0$. We shall rely on the well-known classification of congruences on $\mathcal{M}(G; I, \Lambda; P)$, a version of which we recall next.

\begin{theorem}[Theorem III.4.6 in \cite{PR}] \label{rees-cong-class}
Let $\mathcal{M}(G; I, \Lambda; P)$ be a completely simple Rees matrix semigroup, with appropriate $G$, $I$, $\Lambda$, and $P$, where $P$ is normalized $($i.e., contains a row and column where all the entries are $\, 1$$)$. A \emph{linked} or \emph{admissible triple} $(N, \mathcal{S}, \mathcal{T})$, consists of a normal subgroup $N$ of $G$ and equivalence relations $\mathcal{S}$, $\mathcal{T}$ on $I$, $\Lambda$, respectively, such that if $\, (i,j) \in \mathcal{S}$, then $p_{\lambda i}p_{\lambda j}^{-1} \in N$ for all $\lambda \in \Lambda$, and if $\, (\lambda, \mu) \in \mathcal{T}$, then $p_{\lambda i}p_{\mu i}^{-1} \in N$ for all $i\in I$. 

Given a linked triple $\, (N, \mathcal{S}, \mathcal{T})$, define a relation $\rho_{(N, \mathcal{S}, \mathcal{T})}$ on $\mathcal{M}(G; I, \Lambda; P)$ by 
\[(i,s,\lambda) \rho_{(N, \mathcal{S}, \mathcal{T})} (j,t,\mu)\] 
if $\, (i,j) \in \mathcal{S}$, $(\lambda, \mu) \in \mathcal{T}$, and $st^{-1} \in N$. Then $\rho_{(N, \mathcal{S}, \mathcal{T})}$ is a congruence on $\mathcal{M}(G; I, \Lambda; P)$. Conversely, given a congruence $\rho$ on $\mathcal{M}(G; I, \Lambda; P)$, we have $\rho = \rho_{(N, \mathcal{S}, \mathcal{T})}$ for a unique linked triple $\, (N, \mathcal{S}, \mathcal{T})$. 

Moreover, 
\[\mathcal{M}(G; I, \Lambda; P)/\rho_{(N, \mathcal{S}, \mathcal{T})}\cong \mathcal{M}(G/N; I/\mathcal{S}, \Lambda/\mathcal{T}; P/N),\]
where $P/N$ is the $\Lambda/\mathcal{T} \times I/\mathcal{S}$ matrix with $p_{\lambda i} N$ as the $\, (\mathcal{T}(\lambda), \mathcal{S}(i))$ entry $\, ($with $\mathcal{T}(\lambda)$ denoting the $\mathcal{T}$-equivalence class of $\lambda$, and likewise for $\mathcal{S}(i)$$)$.
\end{theorem}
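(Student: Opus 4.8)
The plan is to establish all three assertions—that each linked triple yields a congruence, that every congruence arises from a unique linked triple, and the quotient formula—by exploiting the idempotents supplied by the normalization. Since $P$ is normalized, I may fix indices $i_0 \in I$ and $\lambda_0 \in \Lambda$ with $p_{\lambda_0 i} = 1 = p_{\lambda i_0}$ for all $i \in I$, $\lambda \in \Lambda$. The structural facts I would record first are that $e := (i_0,1,\lambda_0)$ is idempotent, that $(i,1,\lambda_0)$ and $(i_0,1,\lambda)$ are idempotents, and that every element factors as
\[(i,g,\lambda) = (i,1,\lambda_0)\,(i_0,g,\lambda_0)\,(i_0,1,\lambda).\]
This factorization, together with multiplication by $e$ and its translates, is what lets me pass freely between an arbitrary element and its three ``coordinates''.

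For the first assertion I would check directly that $\rho_{(N,\mathcal{S},\mathcal{T})}$ is an equivalence relation (reflexivity, symmetry, transitivity follow from $N$ being a subgroup and $\mathcal{S},\mathcal{T}$ being equivalence relations), and then verify compatibility with multiplication. Given $(i,s,\lambda)\,\rho_{(N,\mathcal{S},\mathcal{T})}\,(j,t,\mu)$ and an arbitrary $(k,u,\nu)$, the products differ in their group coordinates by expressions such as $s p_{\lambda k} p_{\mu k}^{-1} t^{-1}$ (on the right) and $u p_{\nu i} (st^{-1}) p_{\nu j}^{-1} u^{-1}$ (on the left); writing each as a product of conjugates of $st^{-1} \in N$ and of the matrix quotients $p_{\lambda k}p_{\mu k}^{-1}$ and $p_{\nu i}p_{\nu j}^{-1}$, which lie in $N$ by the linked conditions, and invoking normality of $N$, shows they land in $N$. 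Matching of the $\mathcal{S}$- and $\mathcal{T}$-coordinates is immediate, so the linked conditions are exactly calibrated to produce a congruence.

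For the converse, given a congruence $\rho$ I would set
\[N = \{g \in G : (i_0,g,\lambda_0)\,\rho\,(i_0,1,\lambda_0)\}, \quad \mathcal{S} = \{(i,j) : (i,1,\lambda_0)\,\rho\,(j,1,\lambda_0)\},\]
and define $\mathcal{T}$ symmetrically on $\Lambda$. Normality and the subgroup axioms for $N$ follow by conjugating and multiplying inside the $(i_0,\,\cdot\,,\lambda_0)$-copy of $G$, and the linked conditions are read off by multiplying an $\mathcal{S}$- or $\mathcal{T}$-relation by suitable translates of $e$ to obtain, say, $p_{\lambda i}p_{\lambda j}^{-1} \in N$. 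To prove $\rho = \rho_{(N,\mathcal{S},\mathcal{T})}$, the inclusion $\rho_{(N,\mathcal{S},\mathcal{T})} \subseteq \rho$ comes from the displayed factorization: the three coordinate relations $(i,1,\lambda_0)\,\rho\,(j,1,\lambda_0)$, $(i_0,s,\lambda_0)\,\rho\,(i_0,t,\lambda_0)$ (equivalent to $st^{-1}\in N$), and $(i_0,1,\lambda)\,\rho\,(i_0,1,\mu)$ multiply together, under the congruence, to $(i,s,\lambda)\,\rho\,(j,t,\mu)$. For $\rho \subseteq \rho_{(N,\mathcal{S},\mathcal{T})}$, starting from $(i,s,\lambda)\,\rho\,(j,t,\mu)$ I would sandwich by $e$ to recover $st^{-1}\in N$, and right- (resp. left-) multiply by $e$ and then by a group translate, using $st^{-1}\in N$, to extract $(i,j)\in\mathcal{S}$ (resp. $(\lambda,\mu)\in\mathcal{T}$). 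Uniqueness is then automatic, since the displayed formulas recover $N$, $\mathcal{S}$, $\mathcal{T}$ from $\rho$.

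Finally, for the quotient formula I would show that $(i,s,\lambda)\mapsto (\mathcal{S}(i), sN, \mathcal{T}(\lambda))$ descends to a bijection $\mathcal{M}(G;I,\Lambda;P)/\rho_{(N,\mathcal{S},\mathcal{T})} \to \mathcal{M}(G/N; I/\mathcal{S}, \Lambda/\mathcal{T}; P/N)$, with well-definedness and injectivity both encoded in the description of $\rho_{(N,\mathcal{S},\mathcal{T})}$, and that it respects products. The main obstacle, requiring the most care, is the interplay of the linked conditions with the sandwich matrix: they must be shown both necessary (forced by any congruence) and sufficient (for compatibility), and they are precisely what guarantees that the entry $p_{\lambda i}N$ of $P/N$ depends only on $\mathcal{T}(\lambda)$ and $\mathcal{S}(i)$, so that $P/N$—and hence the target semigroup and the homomorphism—is well-defined. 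Once $e$ and the factorization are in hand, every remaining step reduces to a short computation in $G$ modulo $N$.
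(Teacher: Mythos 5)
Your proposal cannot be compared against a proof in the paper, because the paper gives none: Theorem~\ref{rees-cong-class} is quoted verbatim from Petrich--Reilly \cite{PR} (Theorem III.4.6) as a known classification, and is used as a black box in the proof of Corollary~\ref{rees-no-0-sym}. Judged on its own merits, your argument is correct and is essentially the classical proof from the cited source. The key ingredients all check out: normalization makes $e=(i_0,1,\lambda_0)$, $(i,1,\lambda_0)$, and $(i_0,1,\lambda)$ idempotent and validates the factorization $(i,g,\lambda)=(i,1,\lambda_0)(i_0,g,\lambda_0)(i_0,1,\lambda)$; the two compatibility computations produce exactly the group-coordinate discrepancies $s\,p_{\lambda k}p_{\mu k}^{-1}t^{-1} = \bigl(s\,p_{\lambda k}p_{\mu k}^{-1}s^{-1}\bigr)\bigl(st^{-1}\bigr)$ and $u\bigl(p_{\nu i}(st^{-1})p_{\nu i}^{-1}\bigr)\bigl(p_{\nu i}p_{\nu j}^{-1}\bigr)u^{-1}$, which lie in $N$ by the linked conditions and normality; the recovery of $(N,\mathcal{S},\mathcal{T})$ from a congruence $\rho$ via the subgroup copy $(i_0,\cdot\,,\lambda_0)\cong G$ works (including the step where extracting $(i,j)\in\mathcal{S}$ from $(i,s,\lambda)\,\rho\,(j,t,\mu)$ genuinely needs $st^{-1}\in N$, which you correctly flag); uniqueness is indeed automatic from those recovery formulas; and transitivity of the linked conditions is precisely what makes $P/N$ well-defined on equivalence classes, so the coordinate map descends to the claimed isomorphism. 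The only caveat is that this is a proof outline rather than a fully written proof, but every step you indicate does go through by the short computation you name.
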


It is well-known that every Rees matrix semigroup $\mathcal{M}(G; I, \Lambda; P)$ is isomorphic to one with a normalized sandwich matrix (see~\cite[Theorem 3.4.2]{H} or~\cite[Theorem III.2.6]{PR}). So there is no loss in generality in assuming that the sandwich matrix in normalized in the following description of $\sim_s$, which can also be viewed as a generalization of Corollary~\ref{group-sym}.

\begin{corollary} \label{rees-no-0-sym}
Let $\mathcal{M}(G; I, \Lambda; P)$ be a completely simple Rees matrix semigroup, with appropriate $G$, $I$, $\Lambda$, and $P$, where $P$ is normalized. Also let $H$ be the subgroup of $G$ generated by $\, [G,G]$ and the entries of $P$. Then for all $\, (i,s,\lambda), (j,t,\mu) \in \mathcal{M}(G; I, \Lambda; P)$, we have $\, (i,s,\lambda) \sim_s (j,t,\mu)$ if and only if $st^{-1} \in H$.
\end{corollary}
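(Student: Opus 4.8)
The plan is to realize $\sim_s$ as a specific congruence $\rho_{(N,\mathcal{S},\mathcal{T})}$ from Theorem~\ref{rees-cong-class} and then read off the answer. Since $\sim_s$ is the least congruence with commutative quotient (Theorem~\ref{least-comm}), and every congruence on $\mathcal{M}(G;I,\Lambda;P)$ has the form $\rho_{(N,\mathcal{S},\mathcal{T})}$, I would first determine for which linked triples the quotient $\mathcal{M}(G/N; I/\mathcal{S}, \Lambda/\mathcal{T}; P/N)$ is commutative, and then find the least such triple.

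First I would analyze when a Rees matrix semigroup $\mathcal{M}(G';I',\Lambda';P')$ (with $P'$ having all nonzero entries) is commutative. Computing $(i,s,\lambda)(j,t,\mu) = (i, s p'_{\lambda j} t, \mu)$ and $(j,t,\mu)(i,s,\lambda) = (j, t p'_{\mu i} s, \lambda)$, commutativity forces the first and third coordinates to match, so $|I'| = |\Lambda'| = 1$, i.e. $\mathcal{S}$ and $\mathcal{T}$ must each be the universal relation on $I$ and $\Lambda$. With singleton index sets the semigroup reduces to $G'$ (the sandwich ``matrix'' being a single unit after normalization), so commutativity additionally requires $G' = G/N$ to be abelian, i.e. $[G,G] \subseteq N$. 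Conversely, for the universal $\mathcal{S},\mathcal{T}$ to form a \emph{linked} triple, the defining conditions of Theorem~\ref{rees-cong-class} demand $p_{\lambda i}p_{\lambda j}^{-1} \in N$ and $p_{\lambda i}p_{\mu i}^{-1} \in N$ for all indices; since $P$ is normalized (some row and column are all $1$'s), these conditions are equivalent to every entry $p_{\lambda i}$ lying in $N$. Thus the linked triples giving commutative quotients are exactly those with $\mathcal{S},\mathcal{T}$ universal and $N$ a normal subgroup containing both $[G,G]$ and all entries of $P$ — precisely $N \supseteq H$.

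The least such congruence is then $\rho_{(H,\mathcal{S}_u,\mathcal{T}_u)}$, where $H$ is the subgroup generated by $[G,G]$ and the entries of $P$ (which is normal, being a subgroup containing $[G,G]$, cf.\ the end of Proposition~\ref{sym-closure}), and $\mathcal{S}_u,\mathcal{T}_u$ are universal. I must check $H$ genuinely yields a \emph{linked} triple and the \emph{least} one: any smaller $N$ either fails to contain some entry (breaking the linked condition or commutativity) or fails to contain $[G,G]$ (breaking commutativity), and any non-universal $\mathcal{S}$ or $\mathcal{T}$ breaks commutativity, so $\rho_{(H,\mathcal{S}_u,\mathcal{T}_u)}$ is contained in every commutative-quotient congruence and is itself one; hence it equals $\sim_s$. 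Reading off its definition, $(i,s,\lambda)\sim_s(j,t,\mu)$ iff $(i,j)\in\mathcal{S}_u$, $(\lambda,\mu)\in\mathcal{T}_u$, and $st^{-1}\in H$ — but the first two conditions are automatic, leaving exactly $st^{-1}\in H$, as claimed.

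The main obstacle I anticipate is the commutativity analysis of the quotient, specifically being careful that matching \emph{first and third coordinates} really does force $\mathcal{S}$ and $\mathcal{T}$ to be universal rather than merely some coarser relation, and confirming that after collapsing the index sets the residual sandwich entry is trivial so that the quotient is literally the abelian group $G/H$. A secondary subtlety is verifying the linked-triple conditions reduce cleanly to ``all entries lie in $N$'' using normalization; without normalization one would only get that entries lie in $N$ up to the all-$1$ reference row and column, so invoking the reduction to a normalized $P$ (as the statement assumes) is what makes this step clean.
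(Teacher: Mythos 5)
Your proof is correct, and while it rests on the same two pillars as the paper's --- the linked-triple classification (Theorem~\ref{rees-cong-class}) and the characterization of $\sim_s$ as the least commutative congruence (Theorem~\ref{least-comm}) --- the way you pin down the triple is genuinely different. The paper works ``from below'': writing $\sim_s \, = \rho_{(N,\mathcal{S},\mathcal{T})}$, it forces $\mathcal{S}$ and $\mathcal{T}$ to be universal by exhibiting explicit $\sim_p$-related pairs such as $(i,p_{\lambda i}^{-1},\lambda) \sim_p (j,p_{\lambda j}^{-1},\lambda)$ (invoking Theorem~\ref{rees-prim}), forces $[G,G] \subseteq N$ by writing a commutator and the identity as two explicit four-fold products that are visibly $\sim_s^1$-related, and only then sandwiches $\rho_{(H,\mathcal{S},\mathcal{T})}$ between $\sim_s$ and $\rho_{(N,\mathcal{S},\mathcal{T})}$ to conclude $N = H$ by uniqueness of linked triples. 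You instead work entirely ``from above'': you characterize \emph{all} congruences with commutative quotient directly from the isomorphism $\mathcal{M}(G;I,\Lambda;P)/\rho_{(N,\mathcal{S},\mathcal{T})} \cong \mathcal{M}(G/N; I/\mathcal{S},\Lambda/\mathcal{T};P/N)$ --- the coordinate-matching argument forces universal $\mathcal{S},\mathcal{T}$, linkedness plus normalization forces the entries into $N$, and commutativity of the resulting copy of $G/N$ forces $[G,G] \subseteq N$ --- and then observe that $\rho_{(H,\mathcal{S}_u,\mathcal{T}_u)}$ is the minimum of this family. Your route avoids Theorem~\ref{rees-prim} and all element-level computation, and as a bonus yields a complete description of every commutative-image congruence on $\mathcal{M}(G;I,\Lambda;P)$, not just the least one; the paper's route reuses machinery it had already built (the $\sim_p$ classification) and exhibits concrete conjugating witnesses, which fits its broader theme of tracking how $\sim_p$ generates $\sim_s$. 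One small presentational point: the reduction of the quotient to $G/N$ with a trivial sandwich entry uses the linked condition (all entries in $N$), not normalization alone; your write-up states this slightly out of order, but since any congruence corresponds to a linked triple, the fact is available exactly when you need it, so the logic closes.
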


\begin{proof}
Since, by Theorem~\ref{least-comm}, $\sim_s$ is a congruence, we have $\sim_s \, = \rho_{(N, \mathcal{S}, \mathcal{T})}$ for a linked triple $(N, \mathcal{S}, \mathcal{T})$, by Theorem~\ref{rees-cong-class}.

Let $i,j \in I$ and $\lambda, \mu \in \Lambda$ be any elements. Then $(i,p_{\lambda i}^{-1},\lambda) \sim_p (j,p_{\lambda j}^{-1},\lambda)$ and $(i,p_{\lambda i}^{-1},\lambda) \sim_p (i,p_{\mu i}^{-1},\mu)$, by Theorem~\ref{rees-prim}, and so $(i,p_{\lambda i}^{-1},\lambda) \sim_s (j,p_{\lambda j}^{-1},\lambda)$ and $(i,p_{\lambda i}^{-1},\lambda) \sim_s (i,p_{\mu i}^{-1},\mu)$. Since $i,j \in I$ and $\lambda, \mu \in \Lambda$ were arbitrary, it follows that $\mathcal{S} = I \times I$ and $\mathcal{T} = \Lambda \times \Lambda$.

Let us next show that $H \subseteq N$. Taking any $i \in I$ and $\lambda \in \Lambda$, we can find $j \in I$ such that $p_{\lambda j} = 1$, since $P$ is normalized. Since $\mathcal{S} = I \times I$, and hence $(i,j) \in \mathcal{S}$, by Theorem~\ref{rees-cong-class}, we have $p_{\lambda i} = p_{\lambda i}p_{\lambda j}^{-1} \in N.$ Now let $i \in I$ and $\lambda \in \Lambda$ be such that $p_{\lambda i} = 1$, and let $p,r \in G$ be arbitrary. Then 
\[(i,prp^{-1}r^{-1},\lambda) = (i,p,\lambda)(i,r,\lambda)(i,p^{-1},\lambda)(i,r^{-1},\lambda),\]
and 
\[(i,1,\lambda) = (i,p,\lambda)(i,p^{-1},\lambda)(i,r,\lambda)(i,r^{-1},\lambda).\]
So $(i,prp^{-1}r^{-1},\lambda) \sim_s (i,1,\lambda)$, and hence $prp^{-1}r^{-1} = prp^{-1}r^{-1} \cdot 1^{-1} \in N$, by Theorem~\ref{rees-cong-class}. Since $N$ is a subgroup, it follows that $[G,G] \subseteq N$, and so $H \subseteq N$.

Given that $\mathcal{S} = I \times I$, $\mathcal{T} = \Lambda \times \Lambda$, and $H$ is a normal subgroup of $G$ (since it contains $[G,G]$) containing each $p_{\lambda i}$, we see that $(H, \mathcal{S}, \mathcal{T})$ is a linked triple. Hence, by Theorem~\ref{rees-cong-class}, $\mathcal{M}(G; I, \Lambda; P)/\rho_{(H, \mathcal{S}, \mathcal{T})} \cong G/H$. Since $[G,G] \subseteq H$, the group $G/H$ is abelian. Therefore, by Theorem~\ref{least-comm}, $\sim_s \, = \rho_{(N, \mathcal{S}, \mathcal{T})} \subseteq \rho_{(H, \mathcal{S}, \mathcal{T})}$. Since $H \subseteq N$, we also have $\rho_{(H, \mathcal{S}, \mathcal{T})} \subseteq \rho_{(N, \mathcal{S}, \mathcal{T})}$, and hence $\rho_{(N, \mathcal{S}, \mathcal{T})} = \rho_{(H, \mathcal{S}, \mathcal{T})}$. Thus, again by Theorem~\ref{rees-cong-class}, $H=N$, and so $(i,s,\lambda) \sim_s (j,t,\mu)$ if and only if $st^{-1} \in H$, for all $(i,s,\lambda), (j,t,\mu) \in \mathcal{M}(G; I, \Lambda; P)$.
\end{proof}

Note that the condition in Theorem~\ref{rees-prim}(2) characterizing when $(i,s,\lambda) \sim_p (j,t,\mu) \not\sim_p 0$, namely $rp_{\lambda i}s = tp_{\mu j}r$ for some $r \in G$, is indeed a special case of the condition in Corollary~\ref{rees-no-0-sym} characterizing when $(i,s,\lambda) \sim_s (j,t,\mu)$. Specifically, $rp_{\lambda i}s = tp_{\mu j}r$ is equivalent to $st^{-1} = (p_{\lambda i}^{-1}p_{\mu j})p_{\mu j}^{-1}(r^{-1}t)p_{\mu j}(rt^{-1})$, which is clearly an element of the subgroup $H$ from the corollary.

\section{Containment of Relations} \label{rel-comp-sect}

Using the observations above, we can completely describe the relationships between the various equivalence relations in Definitions~\ref{sim-def1}--\ref{sim-def2}, which we pause to do in this brief section.

First, we compare $\sim_s^1$ and $\sim_s$ to $\sim_o$, $\sim_w$, and $\sim_c$. As Proposition~\ref{free-semi-prop}, Proposition~\ref{conj-compar}, and~\cite[Theorem 2.2]{AKM} show, in any noncommutative free semigroup $F_{\Omega}$ we have
\[\sim_p^1 \, = \, \sim_p \, = \, \sim_c \, = \, \sim_w \, = \, \sim_o \, \subsetneq \, \sim_s^1 \, = \, \sim_s.\] 
On the other hand, in any commutative semigroup $\sim_s^1 \, = \, \sim_s$ (being the identity relation) is contained in each of $\sim_o$, $\sim_w$, $\sim_c$. To take a concrete example, in $S=\Z$, with multiplication given by $st = \min\{s,t\}$ ($s,t \in S$), we have $\sim_s^1 \, = \, \sim_s \, \subsetneq \, \sim_o \, = \, \sim_c$, since $\sim_o \, = \, \sim_c$ is the universal relation on $S$. Likewise, in any semigroup with trivial multiplication (i.e., $st = 0$ for all $s,t \in S$), $\sim_s^1 \, = \, \sim_s$ is the identity relation, but $\sim_w$ is the universal relation. Using these observations, it is easy to construct semigroups where $\sim_s$ and $\sim_s^1$ are incomparable with $\sim_o$, $\sim_w$, and $\sim_c$.

\begin{example} \label{incomp-eg}
Let $\approx \ \in \{\sim_o, \sim_w, \sim_c\}$, and let $S$ be a semigroup for which $\sim_s^1 \, = \, \sim_s$ is the identity relation, $\approx$ is the universal relation, and $\sim_s \, \neq \, \approx$. Also, let $\Omega$ be a set of cardinality at least $3$, and let $F_{\Omega}$ be the free semigroup on $\Omega$. Now take $T = F_{\Omega} \times S$, let $\alpha,\beta,\gamma \in \Omega$ be distinct, and let $s,t \in S$ be distinct. Then in $T$, $(\alpha,s) \approx (\alpha,t)$, but $(\alpha,s) \not\sim_s (\alpha,t)$. On the other hand, $(\gamma \alpha \beta,s^3) \sim_s^1 (\gamma \beta \alpha,s^3)$, but $(\gamma \alpha \beta,s^3) \not\approx (\gamma \beta \alpha,s^3)$, since $\approx \, = \, \sim_p^1$ in $F_{\Omega}$.
\end{example}

We are now ready to explain how all the aforementioned relations interact. As mentioned in Section~\ref{def-sect}, in any semigroup, we have $\sim_p^1 \, \subseteq \, \sim_s^1 \, \subseteq \, \sim_s$, $\sim_p \, \subseteq \, \sim_s$, $\sim_n \, \subseteq \, \sim_p^1 \, \subseteq \, \sim_p \, \subseteq \, \sim_w \, \subseteq \, \sim_o$, and $\sim_n \, \subseteq \, \sim_c \, \subseteq \, \sim_o$. Generally speaking, $\sim_c$ is not comparable to $\sim_p^1$ or $\sim_p$~\cite[Section 1]{AKKM1}, $\sim_s^1$ and $\sim_s$ are not comparable to $\sim_o$ and $\sim_w$ and $\sim_c$ (Example~\ref{incomp-eg}), $\sim_s^1 \, \not\subseteq \, \sim_p$ (Proposition~\ref{free-semi-prop}), and $\sim_p \, \not\subseteq \, \sim_s^1$ (Example~\ref{rees-eg}). Finally, there are semigroups for which $\sim_c \, = \, \sim_o \, \not\subseteq \, \sim_w$, by~\cite[Theorem 3.6]{ABKKMM}, and $\sim_w \, \not\subseteq \, \sim_c$ in any nonzero semigroup with trivial multiplication. So we can illustrate the containments among the relations in question as follows.

\[\xymatrix{
& \sim_s \ar@{-}[ddrr] \ar@{-}[dl] & & & & \sim_o \ar@{-}[dl] \ar@{-}[dr] & \\
\sim_s^1 \ar@{-}[ddrr] & & & & \sim_w \ar@{-}[dl] & & \sim_c \ar@{-}[dddlll] \\
& & & \sim_p \ar@{-}[dl] & & &\\
& & \sim_p^1 \ar@{-}[dr] & & & &\\
& & & \sim_n & & &\\
}\]

All the above containments are generally strict, which justifies having eight separate relations in the diagram. Specifically, there are semigroups where $\sim_c \, \neq \, \sim_o$ (Proposition~\ref{conj-compar}), $\sim_p \, \neq \, \sim_s$, $\sim_p^1 \, \neq \, \sim_s^1$ (Example~\ref{group-eg} or Proposition~\ref{free-semi-prop}), $\sim_p^1 \, \neq \, \sim_p$ (Theorem~\ref{rees-prim}), $\sim_s^1 \, \neq \, \sim_s$ (Example~\ref{rees-eg}), and $\sim_w \, \neq \, \sim_o$ \cite[Theorem 3.6]{ABKKMM}. As mentioned in Section~\ref{free-sect}, $\sim_n$ is the identity relation on a free semigroup, and so $\sim_n \, \neq \, \sim_p^1 \, =  \, \sim_c$ in any noncommutative free semigroup.

Finally, explorations of the relationship between $\sim_p$ and $\sim_w$ have a very interesting history. Let $M$ denote the semigroup of all infinite matrices, with rows and columns indexed by $\Z^+$, entries from $\N$, and finite support (i.e., only finitely many nonzero entries), under the usual matrix multiplication. As alluded to in Sections~\ref{intro} and~\ref{def-sect}, when applied to $M$, in the context of symbolic dynamics, $\sim_p$ is known as \emph{strong shift equivalence}, and $\sim_w$ as \emph{shift equivalence}. The question of whether $\sim_p$ and $\sim_w$ coincide on $M$ was open for more than twenty years, eventually being resolved in the negative by Kim and Roush~\cite{KR}. Of course, one can find simpler examples of semigroups where $\sim_p \, \neq \, \sim_w$, if desired.

\section{Graph Inverse Semigroups} \label{graph-inv-semi-sect}

A (\emph{directed}) \emph{graph} $E=(E^0,E^1,\ra,\so)$ consists of two sets $E^0,E^1$ (containing \emph{vertices} and \emph{edges}, respectively), together with functions $\so,\ra:E^1 \to E^0$, called \emph{source} and \emph{range}, respectively. A \emph{path} $x$ in $E$ is a finite sequence of (not necessarily distinct) edges $x=e_1\cdots e_n$ such that $\ra(e_i)=\so(e_{i+1})$ for $i=1,\dots,n-1$. In this case, $\so(x):=\so(e_1)$ is the \emph{source} of $x$, $\ra(x):=\ra(e_n)$ is the \emph{range} of $x$, and $|x|:=n$ is the \emph{length} of $x$. A path $x$ is \emph{closed} if $\so(x)=\ra(x)$, while a closed path consisting of just one edge is called a \emph{loop}. We view the elements of $E^0$ as paths of length $0$ (extending $\so$ and $\ra$ to $E^0$ via $\so(v)=v=\ra(v)$ for all $v\in E^0$), and denote by $\pth(E)$ the set of all paths in $E$, and by $\clpth(E)$ the set of all closed paths in $E$.

Given a graph $E=(E^0,E^1,\ra,\so)$, the \emph{graph inverse semigroup $G(E)$ of $E$} is the semigroup with zero generated by $E^0$ and $E^1$, together with $E^{-1}:= \{e^{-1} \mid e\in E^1\}$, satisfying the following relations for all $v,w\in E^0$ and $e,f\in E^1$:\\
(V)  $vw = \delta_{v,w}v$,\\ 
(E1) $\so(e)e=e\ra(e)=e$,\\
(E2) $\ra(e)e^{-1}=e^{-1}\so(e)=e^{-1}$,\\
(CK1) $e^{-1}f=\delta _{e,f}\ra(e)$.\\
(Here $\delta$ is the Kronecker delta.) We define $v^{-1}=v$ for each $v \in E^0$, and for any path $x=e_1\cdots e_n$ ($e_1,\dots, e_n \in E^1$) we let $x^{-1} = e_n^{-1} \cdots e_1^{-1}$. With this notation, every nonzero element of $G(E)$ can be written uniquely as $xy^{-1}$ for some $x, y \in \pth(E)$, where $\ra(x) = \ra(y)$. It is also easy to verify that $G(E)$ is indeed an inverse semigroup, with $(xy^{-1})^{-1} = yx^{-1}$ for all $x, y \in \pth (E)$.

If $E$ is a graph with only one vertex and $n$ edges (necessarily loops), for some $n \in \Z^+$, then $G(E)$ is known as a \emph{polycyclic monoid} (or the \emph{bicyclic monoid}, if $n=1$).

For polycyclic monoids, the relation $\sim_p^1$ is characterized in~\cite[Theorem 3.6]{AKKM2}, $\sim_c$ in~\cite[Theorem 3.9]{AKKM2}, and $\sim_n$ in~\cite[Theorem 5.2]{ABKKMM}. Also the relation $\sim_p$ is characterized for all graph inverse semigroups in~\cite{MV}. We record that result here, along with the necessary terminology, give a more convenient restatement, and then use it  to characterize $\sim_s$.

\begin{definition} \label{approx-def}
Let $E$ be a graph, and $x, y \in \clpth(E)$. We write $x \approx y$  if there exist $z_1,z_2 \in \pth(E)$ such that $x=z_1z_2$ and $z_2z_1=y$.
\end{definition}

It is shown in \cite[Lemma 12]{M3} that $\approx$ is an equivalence relation.

\begin{proposition}[Proposition 20 in~\cite{MV}] \label{eq-description}
Let $E$ be a graph, and for each $x \in \clpth (E)$ set 
\[EQ(x) := \{yzy^{-1} \mid y \in \pth (E), z \in \clpth (E), \ra(y)=\so(z), z \approx x\} \text{ and}\]
\[EQ(x^{-1}) := \{yz^{-1}y^{-1} \mid y \in \pth (E), z \in \clpth (E), \ra(y)=\ra(z), z \approx x\}.\]
Then every nonzero $\sim_p$-equivalence class of $G(E)$ is of the form $EQ(x)$ or $EQ(x^{-1})$ for some $x \in \clpth (E)$.

In particular, for all $x_1, x_2 \in \clpth (E)$ we have $EQ(x_1) \cap EQ(x_2) \neq \emptyset$ if and only if $x_1 \approx x_2$ if and only if $EQ(x_1^{-1}) \cap EQ(x_2^{-1}) \neq \emptyset$, and $EQ(x_1) \cap EQ(x_2^{-1}) \neq \emptyset$ if and only if $x_1=x_2 \in E^0$.
\end{proposition}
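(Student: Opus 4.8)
The plan is to reduce every nonzero element of $G(E)$ to a closed path or the inverse of one, and then to show that $\sim_p$ restricted to closed paths is exactly $\approx$; the $EQ$-sets will then fall out as the full $\sim_p$-classes. First I would record two structural facts about the normal form $uv^{-1}$ (with $\ra(u)=\ra(v)$). Using relation (CK1), the product $v^{-1}u$ is nonzero precisely when one of $u,v$ is a prefix of the other. Consequently, if neither is a prefix of the other, then writing $uv^{-1}=u\cdot v^{-1}$ gives $v^{-1}u=0$, so $uv^{-1}\sim_p^1 0$; such elements lie in the class of $0$ and may be set aside. Thus every nonzero element outside the class of $0$ has the form $yzy^{-1}$ with $z=y^{-1}u\in\clpth(E)$ (the positive case, when $v=y$ is a prefix of $u$, where one checks $\so(z)=\ra(z)=\ra(y)$) or $yz^{-1}y^{-1}$ (the negative case). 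I would also note that the length map $uv^{-1}\mapsto |u|-|v|$ is a homomorphism from $G(E)$ into $(\Z,+)$, hence by Corollary~\ref{img-cor} is constant on $\sim_s$-, and a fortiori on $\sim_p$-, classes.

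Next I would carry out the reduction to closed paths. Peeling $y$ off the front, $yzy^{-1}=y\cdot(zy^{-1})\sim_p^1 (zy^{-1})y=z$, since $y^{-1}y=\ra(y)=\so(z)=\ra(z)$ acts as a right identity on $z$; symmetrically $yz^{-1}y^{-1}\sim_p^1 z^{-1}$. Reversing these steps shows that each set $EQ(x)$ and $EQ(x^{-1})$ is contained in a single $\sim_p$-class, namely that of $x$, respectively $x^{-1}$. Combined with the first paragraph, this already gives that every nonzero $\sim_p$-class contains a closed path $z$ or an inverse $z^{-1}$; what remains is to identify the class of such an element with the corresponding $EQ$-set and to determine when two of these coincide.

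For closed paths the inclusion $\approx\,\subseteq\,\sim_p$ is immediate: a single cyclic rotation $z=z_1z_2\mapsto z_2z_1$ is a $\sim_p^1$-step, and $\approx$ is transitive by \cite[Lemma 12]{M3}. The separation of the positive and negative cases is handled by the length homomorphism: a positive element has degree $|z|\ge 0$ and a negative one degree $-|z|\le 0$, so they can be $\sim_p$-related only when $|z|=0$, i.e.\ when $z$ is a vertex $v$, in which case $EQ(v)=\{yy^{-1}:\ra(y)=v\}=EQ(v^{-1})$. This sign dichotomy simultaneously shows that each $EQ(x)$ is the \emph{entire} class of $x$ (no negative representative can occur when $|x|>0$), yields statement (3), and, together with the closed-path analysis below, yields statement (2) since $EQ(x_1)\cap EQ(x_2)\neq\emptyset$ forces $x_1\sim_p x_2$.

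The remaining and most delicate point is the reverse inclusion $\sim_p\,\subseteq\,\approx$ on closed paths: given $z,z'\in\clpth(E)$ with $z=pr$ and $z'=rp$ for some $p,r\in G(E)^1$, one must show $z\approx z'$. I expect this to be the main obstacle. The argument requires a careful case analysis of how a path can factor as a product $pr$ in normal form: writing $p=\alpha\beta^{-1}$ and $r=\gamma\delta^{-1}$, the requirement that $pr$ reduce to a positive path forces $\beta,\gamma$ to be comparable and the inverse parts to cancel, so that after simplification $p$ and $r$ must be of the shape $z_1$ and $z_2$ (up to a common cancelling factor) with $z=z_1z_2$; then $rp$ is a cyclic rotation of $z$, i.e.\ $z'\approx z$. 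Iterating over the defining chain of a $\sim_p$-relation and invoking transitivity of $\approx$ completes the argument, and the last assertion in statement (3) follows since a common element of $EQ(x_1)$ and $EQ(x_2^{-1})$ forces $x_1,x_2$ to be the same vertex.
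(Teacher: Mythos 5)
The paper itself does not prove this proposition; it is imported verbatim from Proposition 20 in~\cite{MV} (where the proof is a factorization analysis of the kind you gesture at), so your attempt has to stand on its own --- and while its skeleton (normal-form analysis, $EQ(x)$ landing inside the class of $x$, rotations being $\sim_p^1$-steps) is correct, there is a genuine gap at the central step. Your key lemma is scoped to a single $\sim_p^1$-step whose endpoints are \emph{both} closed paths, and you then propose to iterate it along the chain defining $z \sim_p z'$. But intermediate elements of such a chain need not be closed paths, so the lemma cannot be applied step by step. Concretely, let $e$ be a loop at $v$ and $f$ an edge with $\so(f) = w \neq v$ and $\ra(f) = v$; then $(ef^{-1})f = e$ while $f(ef^{-1}) = fef^{-1}$, so $e \sim_p^1 fef^{-1} \notin \clpth(E)$. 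Your own one-step analysis makes the same point: factoring a closed path $z = \alpha\gamma'$ as $pr$ with $p = \alpha\beta^{-1}$ and $r = \beta\gamma'$ yields $rp = \beta(\gamma'\alpha)\beta^{-1}$, a conjugate of a rotation rather than a rotation (your conclusion that ``$rp$ is a cyclic rotation of $z$'' is only forced because you \emph{assumed} $rp$ is a closed path). What is actually needed is that the whole set $EQ(x)$ is closed under $\sim_p^1$-steps: if $yzy^{-1} = pr$ with $z \approx x$, then $rp \neq 0$ and $rp \in EQ(x)$. Proving this requires redoing the normal-form case analysis for arbitrary conjugates $yzy^{-1}$, and it is precisely this stronger lemma that makes transitivity of $\approx$ usable; as written, your iteration has nothing to latch onto.

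The second gap is the ``length homomorphism''. There is no nontrivial homomorphism $G(E) \to (\Z,+)$: since $G(E)$ has a zero, $f(0) = f(0 \cdot s) = f(0) + f(s)$ forces $f(s)=0$ for every $s$, so Corollary~\ref{img-cor} cannot be invoked. Moreover, the conclusion you draw from it is false: with $e_1, e_2$ distinct loops at one vertex, both $e_1e_2^{-1}$ (degree $0$) and $e_1^2e_2^{-1}$ (degree $1$) lie in the $\sim_p$-class of $0$, and degree is badly non-constant on $\sim_s$-classes by Theorem~\ref{graph-semi-sim}. The true statement --- degree is preserved along a $\sim_p^1$-step whose two ends are both nonzero --- only becomes usable once you know that chains between elements of $EQ$-sets never pass through $0$, which is part of what the missing closure lemma provides; so the sign-dichotomy paragraph, and in particular its claim that it already ``shows that each $EQ(x)$ is the entire class of $x$,'' is circular. (The two intersection assertions, by contrast, need none of this: uniqueness of the normal form $uv^{-1}$ gives both of them directly, since $y_1z_1y_1^{-1} = y_2z_2y_2^{-1}$ forces $y_1=y_2$ and $z_1=z_2$, and $y_1z_1y_1^{-1} = y_2z_2^{-1}y_2^{-1}$ forces $z_1, z_2$ to be the common vertex $\ra(y_1)$.)
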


\begin{corollary} \label{graph-p-description}
Let $E$ be a graph, and $s, t \in G(E)$. Then $s \sim_p t$ if and only if exactly one of the following holds.
\begin{enumerate}
\item[$(1)$] There exist $x_1,x_2 \in \clpth (E)$ and $y,z \in \pth (E)$ such that $x_1 \approx x_2$, $\ra(y)=\so(x_1)$, $\ra(z)=\so(x_2)$, $s = yx_1y^{-1}$, and $t = zx_2z^{-1}$.
\item[$(2)$] There exist $x_1,x_2 \in \clpth (E)\setminus E^0$ and $y,z \in \pth (E)$ such that $x_1 \approx x_2$, $\ra(y)=\ra(x_1)$, $\ra(z)=\ra(x_2)$, $s = yx_1^{-1}y^{-1}$, and $t = zx_2^{-1}z^{-1}$.
\item[$(3)$] Neither $s$ nor $t$ is of the form $yxy^{-1}$ or $yx^{-1}y^{-1}$, for any $x \in \clpth (E)$ and $y \in \pth (E)$. $($This case occurs if and only if $s \sim_p 0 \sim_p t$.$)$
\end{enumerate}
\end{corollary}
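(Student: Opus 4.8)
The plan is to derive the entire statement from Proposition~\ref{eq-description} by repackaging its content, making the trichotomy and its exclusivity explicit. Throughout I would use three facts from that proposition and its surroundings: the sets $EQ(x)$ and $EQ(x^{-1})$ ($x \in \clpth(E)$) are exactly the nonzero $\sim_p$-equivalence classes of $G(E)$; $\approx$ is an equivalence relation on $\clpth(E)$; and the intersection criteria in the final sentence of Proposition~\ref{eq-description}, in particular that $EQ(x_1) \cap EQ(x_2^{-1}) \neq \emptyset$ forces $x_1 = x_2 \in E^0$.

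First I would isolate the vertex edge case, since it governs everything subtle here. For $v \in E^0$, reading Definition~\ref{approx-def} shows $z \approx v$ forces $z = v$ (a product of two paths has length $0$ only when both factors do), and $v^{-1} = v$ by definition; hence $EQ(v) = EQ(v^{-1}) = \{yy^{-1} \mid y \in \pth(E),\ \ra(y) = v\}$. Consequently, if $x \in \clpth(E) \setminus E^0$ and $z \approx x$, then $z \notin E^0$. This is precisely what the restriction $x_1, x_2 \in \clpth(E) \setminus E^0$ in (2) encodes, and it will power the mutual exclusivity of (1) and (2).

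Next I would prove the three implications comprising ``$s \sim_p t \iff$ exactly one of (1),(2),(3)''. For the backward direction: if (1) holds then $s \in EQ(x_1)$ and $t \in EQ(x_2)$ with $x_1 \approx x_2$, so $EQ(x_1) = EQ(x_2)$ and $s \sim_p t$; (2) is identical via $EQ(x_1^{-1}) = EQ(x_2^{-1})$; and if (3) holds, then neither $s$ nor $t$ lies in any $EQ$-class (using that $\bigcup_x EQ(x)$, resp.\ $\bigcup_x EQ(x^{-1})$, is exactly the set of elements $yxy^{-1}$, resp.\ $yx^{-1}y^{-1}$), hence each lies in the class of $0$, giving $s \sim_p 0 \sim_p t$. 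For the forward direction: if $s \sim_p 0$ then $t \sim_p 0$ by transitivity and neither is of an excluded form, so (3) holds; otherwise $s,t$ share a nonzero class $EQ(w)$ or $EQ(w^{-1})$, and reading off the defining representations together with transitivity of $\approx$ yields (1) or (2) respectively. Here I invoke the edge-case observation: an $EQ(w^{-1})$ with $w \in E^0$ equals $EQ(w)$, so I record it via (1); thus whenever (2) is used one genuinely has $w \notin E^0$, and then $x_1, x_2 \notin E^0$ automatically.

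Finally I would check exclusivity. Cases (1) and (2) each display $s$ in one of the two forms forbidden by (3), so neither is compatible with (3). If both (1) and (2) held, then $s \in EQ(x_1) \cap EQ(x_1'^{-1})$ with $x_1' \in \clpth(E) \setminus E^0$, whence $x_1 = x_1' \in E^0$ by the last clause of Proposition~\ref{eq-description}, contradicting $x_1' \notin E^0$. Hence at most one case holds, and combining with the forward implication, exactly one holds precisely when $s \sim_p t$. I expect the only genuinely delicate point to be the vertex edge case and its dual role in the ``(1) versus (2)'' exclusivity and in selecting the correct representative class; everything else is bookkeeping layered on Proposition~\ref{eq-description}.
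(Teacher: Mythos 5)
Your proposal is correct and follows essentially the same route as the paper: both derive the corollary directly from Proposition~\ref{eq-description}, using that the sets $EQ(x)$ and $EQ(x^{-1})$ are precisely the nonzero $\sim_p$-classes, and that the restriction $x_1,x_2 \in \clpth(E)\setminus E^0$ in (2), together with the last clause of that proposition, forces mutual exclusivity of (1) and (2). Your explicit treatment of the vertex edge case ($EQ(v)=EQ(v^{-1})$ for $v \in E^0$, so such classes are recorded under (1)) just spells out a detail the paper's shorter proof leaves implicit.
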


\begin{proof}
It follows immediately from Proposition~\ref{eq-description} that $s \sim_p 0 \sim_p t$ if and only if $s$ and $t$ are not of the form $yxy^{-1}$ or $yx^{-1}y^{-1}$, for any $x \in \clpth (E)$ and $y \in \pth (E)$, and if $s \not\sim_p 0 \not\sim_p t$, then $s \sim_p t$ if and only if $s$ and $t$ satisfy either (1) or (2). In (2) we insist on $x_1$ and $x_2$ not being vertices, to ensure that $s$ and $t$ cannot satisfy (1) and (2) simultaneously.
\end{proof}

For the remainder of the section we employ the convention that for any loop $e$ in a graph $E$ and any $n \in \Z$, $e^n$ denotes the product of $n$ copies of $e$ if $n >0$, the product of $|n|$ copies of $e^{-1}$ if $n<0$, and $e^n = \so(e)$ if $n=0$. 

To describe $\sim_s$ in graph inverse semigroups, we require a technical lemma. 

\begin{lemma} \label{graph-lem}
Let $E$ be a graph, and $v\in E^0$. Then the following hold.
\begin{enumerate}
\item[$(1)$] If $\, \ra^{-1}(v) = \{v\}$, then the $\sim_s$-equivalence class of $v$ is $\, \{v\}$.
\item[$(2)$] If $\, \ra^{-1}(v) = \{v,e\}$ for some loop $e\in E^1$, then the $\sim_s$-equivalence class of $v$ is $\, \{e^ne^{-n} \mid n \in \N\}$.
\end{enumerate}
\end{lemma}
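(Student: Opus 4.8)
The plan is to prove each class by a double inclusion. The containment of the claimed set in the $\sim_s$-class of $v$ is an easy congruence computation, while the reverse containment is obtained by exhibiting an explicit homomorphism onto a commutative semigroup whose relevant fiber is exactly the claimed set. Throughout I would invoke Theorem~\ref{least-comm} in the form of Corollary~\ref{img-cor}: for any homomorphism $\phi$ from $G(E)$ to a commutative semigroup $C$ we have $\sim_s \, \subseteq \ker(\phi)$, so the $\sim_s$-class of $v$ is contained in $\phi^{-1}(\phi(v))$. It therefore suffices to construct $\phi$ and compute this fiber.

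The easy inclusion is immediate in (1), where $\{v\}$ is trivially contained in the class. In (2), for the loop $e$ at $v$ we have $ee^{-1} \sim_p^1 e^{-1}e = v$ by (CK1), hence $ee^{-1} \sim_s v$; then $e^ne^{-n} = e^{n-1}(ee^{-1})e^{-(n-1)} \sim_s e^{n-1}ve^{-(n-1)} = e^{n-1}e^{-(n-1)}$ by Lemma~\ref{Leroy-lemma2}(2), and induction on $n$ yields $e^ne^{-n} \sim_s v$ for all $n \in \N$ (with $e^0e^{-0}=v$). This gives the containment $\{e^ne^{-n} \mid n \in \N\} \subseteq [v]_{\sim_s}$.

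For the reverse inclusion in (1), I would take $C = \{0, v\}$, the two-element commutative semilattice with $v$ idempotent and $0$ absorbing, and define $\phi(v) = v$ and $\phi(g) = 0$ for every other vertex and every edge and ghost edge $g$. Relations (V), (E1), (E2) hold because $0$ is absorbing, and the crucial point is (CK1): since the hypothesis $\ra^{-1}(v) = \{v\}$ forces $\ra(e) \neq v$ for every edge $e$, we get $\phi(\ra(e)) = 0 = \phi(e^{-1})\phi(f)$, so $\phi$ respects $e^{-1}f = \delta_{e,f}\ra(e)$. Using the uniqueness of the normal form $xy^{-1}$, any element other than $v$ either has positive path length (so it involves an edge generator and $\phi$ returns $0$) or is a vertex $w \neq v$ (again mapped to $0$); hence $\phi^{-1}(v) = \{v\}$, giving $[v]_{\sim_s} \subseteq \{v\}$. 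For (2) I would take $C = \Z \cup \{0_C\}$, the additive group $\Z$ with an adjoined absorbing zero, and set $\phi(v) = 0$, $\phi(e) = 1$, $\phi(e^{-1}) = -1$, and $\phi(g) = 0_C$ for every other generator. The key verification is again (CK1): for the loop, $\phi(e^{-1})\phi(e) = -1 + 1 = 0 = \phi(\ra(e))$, while for any other edge $g$ the hypothesis $\ra^{-1}(v) = \{v, e\}$ gives $\ra(g) \neq v$, so both sides land in $0_C$; the remaining relations are routine since $0_C$ absorbs and $\phi(v) = 0$ is the additive identity. Computing the fiber from the normal form, $\phi(xy^{-1}) \neq 0_C$ forces every edge of $x$ and of $y$ to equal $e$, i.e.\ $x = e^a$ and $y = e^b$, whence $\phi(xy^{-1}) = a - b$, which is $0$ precisely when $a = b$. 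Thus $\phi^{-1}(0) = \{e^ne^{-n} \mid n \in \N\}$, completing the reverse inclusion.

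The main obstacle is the construction of the two separating homomorphisms, and specifically the verification that the Cuntz--Krieger relation (CK1) is preserved. This is the only relation that could force $\phi(\ra(e))$ to take a distinguished value at $v$, and excluding that (except through the single loop in case (2)) is exactly what the hypotheses $\ra^{-1}(v) = \{v\}$ and $\ra^{-1}(v) = \{v,e\}$ guarantee; the remaining relations hold automatically because of the absorbing zero. Once the homomorphisms are in place, the fiber computations are a direct application of the uniqueness of the normal form $xy^{-1}$.
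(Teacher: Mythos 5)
Your proof is correct, but it takes a genuinely different route from the paper's. The paper first reads off the easy inclusion from its classification of $\sim_p$-classes in $G(E)$ (Proposition~\ref{eq-description}/Corollary~\ref{graph-p-description}), and then proves the reverse inclusion by a combinatorial analysis inside $G(E)$: the hypothesis on $\ra^{-1}(v)$ forces every factorization $v = g_1\cdots g_n$ into generators to use only $v$ (case (1)), or only $v,e,e^{-1}$ with equally many $e$'s as $e^{-1}$'s (case (2)), so a $\sim_s^1$-transition from $v$ can only produce $v$, respectively $e^ne^{-n}$, and the transitive closure is handled by iterating this argument. You instead bound the entire $\sim_s$-class at once by exhibiting explicit homomorphisms onto commutative semigroups (a two-element semilattice in (1), and $\Z$ with an absorbing zero adjoined in (2)) and applying Corollary~\ref{img-cor}; the graph-theoretic hypotheses enter exactly where they should, namely in the verification that (CK1) is respected, and the fiber computation replaces the factorization analysis. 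Your route is self-contained modulo the presentation of $G(E)$ and Theorem~\ref{least-comm}---it needs neither the $\sim_p$-classification from~\cite{MV} nor any induction along $\sim_s$-chains, since the fiber bound applies to the whole class in one step---and it is very much in the spirit of the alternative the author sketches in the remark following Theorem~\ref{graph-semi-sim}. What it costs is the well-definedness check for a map defined on a presented semigroup: all four families of relations must be verified for each assignment (your ``routine'' cases are indeed routine, but they are where an error would hide), and one small imprecision is that your fiber computations only use the \emph{existence} of the normal form $xy^{-1}$, not its uniqueness. The paper's route avoids constructing anything new, staying inside $G(E)$ and reusing machinery already established there.
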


\begin{proof}
(1) If $\ra^{-1}(v) = \{v\}$, then, by Corollary~\ref{graph-p-description} (or Proposition~\ref{eq-description}), the $\sim_p$-equivalence class of $v$ is $\{v\}$. This implies that the only way to express $v$ as a product of elements of $G(E)$ is $v = v \cdots v$, and so the $\sim_s$-equivalence class of $v$ is $\{v\}$ as well. 

(2) Suppose that $\ra^{-1}(v) = \{v,e\}$ for some loop $e\in E^1$. Then, by Corollary~\ref{graph-p-description} (or Proposition~\ref{eq-description}), the $\sim_p$-equivalence class of $v$ is $\{e^ne^{-n} \mid n \in \N\}$, and hence this set is contained in the $\sim_s$-equivalence class of $v$. This also implies that if $v = g_1\cdots g_n$ for some $g_i \in E^0 \cup E^1 \cup E^{-1}$, then each $g_i \in \{v, e, e^{-1}\}$, and the number of copies of $e$ among the $g_i$ is equal to the number of copies of $e^{-1}$. It follows that if $v \sim_s^1 s$ for some $s \in G(E)$, then $s = e^ne^{-n}$ for some $n \in \N$. Iterating this argument (on $e^ne^{-n}$) shows that if $v \sim_s s$ for some $s \in G(E)$, then $s = e^ne^{-n}$ for some $n \in \N$.
\end{proof}

\begin{theorem} \label{graph-semi-sim}
Let $E$ be a graph, and $s, t \in G(E)$. Then $s \sim_s t$ if and only if exactly one of the following holds.
\begin{enumerate}
\item[$(1)$] There exists a vertex $v \in E^0$ such that $\, \ra^{-1}(v) = \{v\}$ and $s = v = t$. 
\item[$(2)$] There exist a loop $e \in E^1$ and $n_1,m_1,n_2,m_2 \in \N$ such that $s = e^{n_1}e^{-m_1}$, $t = e^{n_2}e^{-m_2}$, $n_1-m_1=n_2-m_2$, and $\, \ra^{-1}(\so(e)) = \{\so(e), e\}$.
\item[$(3)$] Neither $s$ nor $t$ is of the forms described in $\, (1)$ and $\, (2)$. $($This case occurs if and only if $s \sim_s 0 \sim_s t$.$)$
\end{enumerate}
\end{theorem}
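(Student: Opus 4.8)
The plan is to characterize the $\sim_s$-equivalence classes of $G(E)$ by leveraging two facts established earlier: that $\sim_s$ is a congruence whose class of $0$ is an ideal (Theorem~\ref{least-comm} and Proposition~\ref{sym-closure}(3)), and the full description of $\sim_p$-classes from Corollary~\ref{graph-p-description}, together with the technical Lemma~\ref{graph-lem}. The guiding principle is that $\sim_s$ is the least commutative congruence, so I expect $\sim_s$ to drastically collapse the $\sim_p$-structure: most nonzero elements of $G(E)$ should become $\sim_s$-equivalent to $0$, and the only elements surviving in nontrivial classes should be those trapped by a ``rigidity'' coming from isolated vertices or single-loop vertices, exactly as in the two cases of Lemma~\ref{graph-lem}.

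First I would verify the three cases are mutually exclusive and that case (3) is correctly identified with $s\sim_s 0\sim_s t$; this reduces the theorem to showing that an element $s$ has $s\not\sim_s 0$ precisely when it has one of the two forms in (1) and (2), and that within each such form the stated numerical condition describes the class. For the forward structural analysis, I would argue that if $s$ is any element not of the forms in (1) or (2), then $s\sim_s 0$. The key tool is that $\sim_s\supseteq\sim_p$, so I can start from the $\sim_p$-class description: a nonzero $s$ is $yx^{\pm1}y^{-1}$ for some closed path $x$ and path $y$. Using that $\sim_s$ is a congruence, I would multiply by suitable idempotents/edges to show that unless the closed path $x$ is forced to live at a vertex $v$ with $\ra^{-1}(v)=\{v\}$ or $\{v,e\}$, one can produce two distinct factorizations exhibiting $s\sim_s^1 s'$ with $s'$ eventually reaching $0$ (for instance by exploiting a second edge into the relevant vertex, or a path that can be ``rotated'' to create a zero product via relation (CK1)). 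This is where Lemma~\ref{graph-lem} does the real work: it already pins down the classes of the two special kinds of vertices, so the task is to propagate those computations through conjugation by $y$ and through products.

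For the two surviving cases I would proceed as follows. In case (1), Lemma~\ref{graph-lem}(1) gives the class of $v$ as $\{v\}$, and I must check no other element is $\sim_s$-related to $v$; this is immediate since any factorization of $v$ uses only $v$. In case (2), Lemma~\ref{graph-lem}(2) describes the class of $\so(e)=e^0e^{0}$, namely $\{e^ne^{-n}\mid n\in\N\}$, which is the $n_1-m_1=0$ slice; to obtain the general slice I would use that $\sim_s$ is a congruence closed under the inverse operation (Lemma~\ref{inv-semi-lem}(2)) and multiply representatives by $e$ and $e^{-1}$. Concretely, $e^{n_1}e^{-m_1}\sim_s e^{n_2}e^{-m_2}$ whenever the ``winding numbers'' $n_1-m_1$ and $n_2-m_2$ agree should follow by translating a relation $e^{a}e^{-a}\sim_s e^{b}e^{-b}$ (a consequence of Lemma~\ref{graph-lem}(2)) via left/right multiplication by powers of $e$ and $e^{-1}$; conversely, the homomorphism sending $e\mapsto 1$, $e^{-1}\mapsto-1$ into $(\Z,+)\cup\{0\}$ (or an analogous commutative image) separates different winding numbers, which by Corollary~\ref{img-cor} shows the condition $n_1-m_1=n_2-m_2$ is necessary.

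The main obstacle I anticipate is the forward structural step: showing rigorously that \emph{every} element not of the two special forms collapses to $0$ under $\sim_s$. The difficulty is that $G(E)$ has complicated elements $yx^{\pm1}y^{-1}$ where $y$ is a long path and $x$ a long closed path, and I must show such an element can always be re-factored to introduce an ``illegal'' product $e^{-1}f$ with $e\neq f$ (forcing a $0$ by (CK1)), or can be conjugated/permuted into something already known to reach $0$. The cleanest route is probably to reduce to the vertex case: since $\sim_s$ is a congruence and an ideal-class argument applies, I would show $yx^{\pm1}y^{-1}\sim_s \ra(y)$ or $\so(x)$-type idempotents and then invoke Lemma~\ref{graph-lem} at that vertex, so that whenever the relevant vertex fails the hypotheses of Lemma~\ref{graph-lem}(1) or~(2), the idempotent—and hence $s$—is $\sim_s 0$. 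Making this reduction airtight, handling the interaction between the ``length'' of $x$ and the structure of $\ra^{-1}(\so(x))$, is the step I expect to require the most care.
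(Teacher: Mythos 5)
Your high-level skeleton matches the paper's proof (exploit the congruence property of $\sim_s$, use Lemma~\ref{graph-lem} at the two rigid vertex types, and create zero products via (CK1) to collapse everything else), and your winding-number homomorphism for the necessity of $n_1-m_1=n_2-m_2$ is a legitimate alternative in spirit to what the paper does. But there is a genuine gap at precisely the step you defer as the ``main obstacle'': you never establish that every element not of forms (1) and (2) is $\sim_s$-equivalent to $0$, and you misattribute this work to Lemma~\ref{graph-lem}, which does not do it --- that lemma only computes the classes of the two \emph{good} vertex types and says nothing about vertices failing its hypotheses. What is actually needed (and what the paper supplies) are two short computations plus a rigidity observation. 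Computations: if $v$ carries two distinct loops $e_1 \neq e_2$, then $e_1 = e_1e_2^{-1}e_2 \sim_s^1 e_2^{-1}e_1e_2 = 0$, whence $v = e_1^{-1}e_1 \sim_s 0$; and if some edge $g$ has $\ra(g)=v$ but $\so(g)\neq v$, then $g = gv \sim_s^1 vg = 0$, whence $v = g^{-1}g \sim_s 0$. Rigidity: if $\ra^{-1}(v)=\{v\}$ (resp.\ $\ra^{-1}(v)=\{v,e\}$ with $e$ a loop), then every path with range $v$ equals $v$ (resp.\ is a power of $e$), since its last edge lies in $\ra^{-1}(v)$ and induction applies; hence any nonzero $s=xy^{-1}$ not of forms (1), (2) has $v:=\ra(x)$ of one of the two bad types, and then $s = xvy^{-1} \sim_s x\cdot 0\cdot y^{-1} = 0$. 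Without these three ingredients the forward implication is simply asserted, not proved.

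Two further problems. First, your intermediate claim that you would ``show $yx^{\pm1}y^{-1}\sim_s \ra(y)$ or $\so(x)$-type idempotents'' is false in general: by Lemma~\ref{graph-lem}(2) the class of $\so(e)$ is $\{e^ne^{-n}\mid n\in\N\}$, so $e \not\sim_s \so(e)$ when $e$ is the unique loop at its vertex. The correct mechanism is only that $s$ \emph{factors through} the vertex, so the vertex being $\sim_s 0$ propagates to $s$. (Relatedly, not every nonzero element of $G(E)$ has the form $yx^{\pm1}y^{-1}$, but the remaining ones are $\sim_p 0$, hence $\sim_s 0$, so that slip is harmless.) Second, your homomorphism $G(E)\to (\Z,+)\cup\{0\}$ sending $e^ne^{-m}\mapsto n-m$ and everything else to an absorbing zero is multiplicative \emph{only because of} the same rigidity fact, so it is not available ``for free'': one must rule out a product of two non-special elements being of the form $e^ne^{-m}$, and this genuinely fails at a bad vertex --- if $g$ is a second edge into $v$, then $g^{-1}\cdot(ge^{n}e^{-m}) = e^ne^{-m}$ is a special product of two non-special factors. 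The paper sidesteps this entirely by a congruence computation, $v \sim_s e^{n_1-m_1}e^{m_1-n_1} \sim_s e^{m_1-n_1+n_2-m_2}$, followed by an appeal to Lemma~\ref{graph-lem}(2); if you prefer your homomorphism route, you must prove multiplicativity using the rigidity of paths into $\so(e)$, which is the same work.
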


\begin{proof}
If $s$ and $t$ satisfy (1), then, certainly, $s \sim_s t$. Moreover, by Lemma~\ref{graph-lem}(1), in this case the $\sim_s$-equivalence class of $s=t$ does not contain $0$.

Now suppose that $s$ and $t$ satisfy (2). Then
\[s \sim_s  e^{-m_1}e^{n_1} = e^{n_1-m_1} =  e^{n_2-m_2} =  e^{-m_2}e^{n_2} \sim_s t.\] 
Moreover, by Lemma~\ref{graph-lem}(2), in this situation the $\sim_s$-equivalence class of $\so(s) = \so(t)$ does not contain $0$. We claim that the $\sim_s$-equivalence class of $s$ and $t$ does not either. For suppose that $e^{n_1-m_1} \sim_s 0$. Since $\sim_s$ is a congruence (by Theorem~\ref{least-comm}), this would give $e^{m_1-n_1}e^{n_1-m_1} \sim_s e^{m_1-n_1}\cdot 0$ and $e^{n_1-m_1}e^{m_1-n_1} \sim_s 0 \cdot e^{m_1-n_1}$. But either $\so(e) = e^{m_1-n_1}e^{n_1-m_1}$ or $\so(e) = e^{n_1-m_1}e^{m_1-n_1}$, and so we would have $\so(e) \sim_s 0$, producing a contradiction. Thus the $\sim_s$-equivalence class of $s$ and $t$ does not contain $0$.

Next suppose that $s$ does not satisfy (1) or (2). We may further suppose that $s \neq 0$, since otherwise $s \sim_s 0$. Write $s = xy^{-1}$ for some $x,y \in \pth(E)$, and let $v = \ra(x)$. Then either there are distinct loops $e_1,e_2 \in E^1$ such that $\so(e_1)=v=\so(e_2)$, or there exists $g \in E^1$ such that $\ra(g) = v$ and $\so(g) \neq v$. In the first case, 
\[e_1 = e_1e_2^{-1}e_2 \sim_s e_2^{-1}e_1e_2 = 0.\]
Since $\sim_s$ is a congruence, we have $v = e_1^{-1}e_1 \sim_s e_1^{-1}\cdot 0 = 0$, which gives $s = xvy^{-1} \sim_s 0$. In the second case, $g = gv \sim_s vg = 0$, which again gives $v = g^{-1}g \sim_s 0$ and $s \sim_s 0$. It follows that if $s$ and $t$ satisfy (3), then $s \sim_s 0 \sim_s t$. 

Conversely, suppose that $s \sim_s t$. If $s \sim_s 0 \sim_s t$, then by the first two paragraphs of this proof, $s$ and $t$ satisfy (3). (In particular, this establishes the parenthetical claim in (3).) So let us assume that $s \not\sim_s 0 \not\sim_s t$, and let $v = \so(s)$. Then $s = vs$ implies that $v \not\sim_s 0$, since $\sim_s$ is a congruence. Therefore $s = vs \sim_s vt$, which implies that $v = \so(t)$. Now suppose that there exist $w \in E^0 \setminus \{v\}$ and $e \in E^1$, such that $\so(e) = w$ and $\ra(e) = v$. Then $e = we \sim_s ew = 0$, and so $v = e^{-1}e \sim_s 0$, producing a contradiction. Thus $\so(e)=v$ for all $e \in E^1$ with $\ra(e) = v$. A similar argument shows that if there exists $e \in E^1$ such that $\so(e) = v$ and $\ra(e) \neq v$, then $e \sim_s 0$. In particular, writing $s = xy^{-1}$ for some $x,y \in \pth(E)$, it follows that $\ra(y) = \ra(x) = v = \so(y)$, and likewise for $t$.

Next suppose that $e_1,e_2 \in E^0$ are distinct loops satisfying $\so(e_1) = v = \so(e_2)$. Then, as before, 
\[e_1 = e_1e_2^{-1}e_2 \sim_s e_2^{-1}e_1e_2 = 0,\]
which gives $v = e_1^{-1}e_1 \sim_s 0$ and $s \sim_s 0$. Thus either $\ra^{-1}(v) = \{v\}$, or $\ra^{-1}(v) = \{v, e\}$ for some loop $e \in E^1$. In the first case, $s = v = t$; i.e., $s$ and $t$ satisfy (1). In the second case, necessarily, $s = e^{n_1}e^{-m_1}$ and $t = e^{n_2}e^{-m_2}$ for some $n_1,m_1,n_2,m_2 \in \N$. Clearly, $s \sim_s e^{n_1-m_1}$ and $t \sim_s e^{n_2-m_2}$. Using the fact that $\sim_s$ is a congruence once more, we see that 
\[v \sim_s e^{n_1-m_1}e^{m_1-n_1} \sim_s e^{n_2-m_2}e^{m_1-n_1}\sim_s e^{m_1-n_1 + n_2-m_2}.\]
Lemma~\ref{graph-lem}(2) then implies that $n_1-m_1=n_2-m_2$, and so $s$ and $t$ satisfy (2). Thus, if $s \sim_s t$, then $s$ and $t$ must satisfy one of (1)--(3). Moreover, those three conditions are clearly mutually exclusive.
\end{proof}

We could have used the fact that $\sim_s$ is the least commutative congruence on any semigroup (Theorem~\ref{least-comm}) for the second half of the proof above, instead of the more direct approach taken. Specifically, one can check, either directly or via the results in~\cite{W} (which describe all the congruences on a graph inverse semigroup), that identifying the elements of $G(E)$ according to (1)--(3) above produces a commutative congruence on $G(E)$. From this it follows that $s \sim_s t$ implies that $s$ and $t$ both satisfy the same condition among (1)--(3), for all $s,t \in G(E)$.

\section{Classical Transformation Semigroups}

Given a set $\Omega$, we denote by $\T(\Omega)$ the monoid of all functions from $\Omega$ to $\Omega$, by $\PT(\Omega)$ the monoid of all partial functions from $\Omega$ to $\Omega$, and by $\I(\Omega)$ the symmetric inverse monoid on $\Omega$. It turns out that $\sim_s$ can be described in exactly the same way on these three semigroups, and so we shall do that simultaneously.

Before continuing, we recall some terminology pertaining to partial transformations. Given a set $\Omega$, the elements of $\PT(\Omega)$ are functions $s: \Gamma\rightarrow \Delta$, where $\Gamma,\Delta \subseteq \Omega$, and the elements of $\I(\Omega)$ are the bijective functions in $\PT(\Omega)$. Here we let $\dom(s):=\Gamma$ be the \emph{domain} of $s$, and $\im(s): = \Delta$ be the \emph{image} of $s$. For all $s,t \in \PT(\Omega)$, $st \in \PT(\Omega)$ is taken to be the composite of $s$ and $t$ as functions, restricted to the domain $t^{-1}(\dom(s)\cap \im(t))$. 

In the semigroups $\T(\Omega)$, $\PT(\Omega)$, and $\I(\Omega)$ the various relations in Definitions~\ref{sim-def1} and~\ref{sim-def2} have been studied extensively. Let us review the relevant literature, for the convenience of the reader. In $\T(\Omega)$, the relation $\sim_c \, = \, \sim_o$  is classified in~\cite[Theorem 6.1]{AKM}, and a description of $\sim_n$ is given in~\cite[Theorem 4.11]{K} and~\cite[Theorem 2.33]{ABKKMM}. For finite $\Omega$, the relation $\sim_p$ is classified in~\cite[Theorem 1]{KM}. In $\PT(\Omega)$, the relation $\sim_c \, = \, \sim_o$  is classified in~\cite[Theorem 5.3]{AKM}, and $\sim_n$ is described in~\cite[Theorem 4.8]{K}. For finite $\Omega$, the relation $\sim_p$ is classified in~\cite[Theorem 1]{KM}. In $\I(\Omega)$, with $\Omega$ countable, $\sim_p$ is classified in~\cite[Theorem 2]{KM}, and $\sim_c$ is classified in~\cite[Theorem 2.14]{AKKM1}. By~\cite[Corollary 5.2]{K} and~\cite[Proposition 2]{KM}, $\sim_n \, = \, \sim_p$ in $\I(\Omega)$, for all $\Omega$. 

In each case where there is a complete classification of equivalence classes in the aforementioned semigroups, in terms of the actions of the elements, it tends to be rather difficult to state and prove. In contrast to this, we can obtain complete descriptions of the $\sim_s$-equivalence classes in these semigroups rather quickly, but at the cost of the result being more trivial.

For $\T(\Omega)$, $\PT(\Omega)$, and $\I(\Omega)$  there are well-known complete classifications of congruences--see~\cite{Ma} (alternatively, \cite[\S 10.8]{CP}), \cite{S}, and~\cite{L}, respectively. Our strategy in describing $\sim_s$ in these semigroups is, fundamentally, to rely on those classifications. For infinite $\Omega$ the congruence classifications are somewhat complicated, and so we shall handle the infinite cases more directly, with the help of the next lemma. The first statement in this lemma is a variation on~\cite[Theorem~3.3]{HRH}, which says that for infinite $\Omega$, the semigroup $\T(\Omega)$ is generated by the symmetric group $\B(\Omega)$ together with an injection and a surjection.

\begin{lemma} \label{transf-gen}
The following hold for any infinite set $\, \Omega$.
\begin{enumerate}
\item[$(1)$] There exist $s,t \in \T(\Omega)$, satisfying $st=1$, such that $\T(\Omega) = s\B(\Omega)t$.
\item[$(2)$] If $s \in \I(\Omega)$ is such that $\, \dom(s) = \Omega$ and $\, |\Omega \setminus \im(s)| = |\Omega|$, then $\I(\Omega) = s^{-1}\B(\Omega)s$.
\end{enumerate}
\end{lemma}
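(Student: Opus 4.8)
The plan is to construct the maps $s$ (and, in part (1), $t$) explicitly from a partition of $\Omega$ into blocks of full cardinality, and then, for each prescribed target, to build the middle permutation by a cardinal-counting argument. Throughout I use the composition convention fixed earlier in this section, under which products act on the left, so that $(s\sigma t)(x)=s(\sigma(t(x)))$, $(s^{-1}\sigma s)(x)=s^{-1}(\sigma(s(x)))$, and $st=1$ means $s(t(x))=x$ for all $x$; in particular $t$ will be an injection and $s$ a surjective left inverse of $t$.

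For part (1) I would first use $|\Omega|=|\Omega\times\Omega|$ to fix a partition $\Omega=\bigsqcup_{y\in\Omega}F_y$ with every $|F_y|=|\Omega|$, and within each fiber single out a point $a_y$ and split the remainder as $F_y=\{a_y\}\sqcup B_y'\sqcup B_y''$ with $|B_y'|=|B_y''|=|\Omega|$. Setting $s(z)=y$ for all $z\in F_y$ (so $s$ is surjective) and $t(x)=a_x$ (so $t$ is injective) gives $s(t(x))=x$, i.e.\ $st=1$. Given an arbitrary $f\in\T(\Omega)$, I would then define a partial map $g$ on $\im(t)=\{a_x\mid x\in\Omega\}$ that sends $a_x$ into $B'_{f(x)}$: since $|f^{-1}(y)|\le|\Omega|=|B_y'|$, the set $\{a_x\mid f(x)=y\}$ injects into $B_y'$, and assembling these fiber by fiber yields an injection $g$ of $\im(t)$ into $\bigsqcup_y B_y'$ with $s(g(a_x))=f(x)$. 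Any permutation $\sigma$ extending $g$ then satisfies $s(\sigma(t(x)))=s(g(a_x))=f(x)$, so $s\sigma t=f$.

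The real work in part (1), and the step I expect to be the main obstacle, is checking that this partial assignment $g$ extends to a genuine bijection $\sigma\in\B(\Omega)$. This is where reserving the halves $B_y''$ pays off: the image of $g$ misses $\bigsqcup_y B_y''$ entirely, a set of cardinality $|\Omega|\cdot|\Omega|=|\Omega|$, so $\Omega\setminus\im(g)$ has cardinality $|\Omega|$, while the domain complement $\Omega\setminus\im(t)$ likewise has cardinality $|\Omega|$. Any bijection between these two complements, glued to $g$, produces the desired $\sigma$. The underlying principle is that keeping a reservoir of full cardinality free on both the domain and image sides is exactly what allows a partial assignment to be completed to a permutation.

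Part (2) follows the same philosophy, but via conjugation. Writing $A=\im(s)$, so that $s\colon\Omega\to A$ is a bijection and $|\Omega\setminus A|=|\Omega|$ by hypothesis, I would observe that for any $f\in\I(\Omega)$ the conjugate $g:=sfs^{-1}$ is a partial injection with $\dom(g)=s(\dom(f))\subseteq A$ and $\im(g)=s(\im(f))\subseteq A$, and that $s^{-1}\sigma s=f$ holds for \emph{any} permutation $\sigma$ extending $g$ as long as $\sigma$ maps $A\setminus\dom(g)$ into $\Omega\setminus A$. Indeed, since $s$ is injective, $s(x)\in A\setminus\dom(g)$ exactly when $x\notin\dom(f)$, and the side condition then forces $s^{-1}\sigma s$ to be undefined off $\dom(f)$, while on $\dom(f)$ one computes $s^{-1}(\sigma(s(x)))=s^{-1}(s(f(x)))=f(x)$. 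So the problem again reduces to extending a partial injection of $A$ to a permutation of $\Omega$ under a constraint, and the obstacle is the identical cardinal bookkeeping: I would split $\Omega\setminus A=H\sqcup K$ with $|H|=|K|=|\Omega|$, inject $A\setminus\dom(g)$ into $H$ (possible as $|A\setminus\dom(g)|\le|A|=|\Omega|$), and then map $\Omega\setminus A$ bijectively onto $\Omega\setminus(\im(g)\cup I_1)$, where $I_1\subseteq H$ is the image just chosen. Since this last target contains $K$ it has cardinality $|\Omega|$, matching $|\Omega\setminus A|$, and the three image blocks $\im(g)\subseteq A$, $I_1\subseteq H$, and the rest are pairwise disjoint, so the resulting $\sigma$ is a permutation of $\Omega$ with $s^{-1}\sigma s=f$, as required.
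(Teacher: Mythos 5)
Your proof is correct and takes essentially the same route as the paper's: in (1) the same partition of $\Omega$ into $|\Omega|$-many fibers of cardinality $|\Omega|$ defining the collapsing map $s$ and the transversal $t$, followed by extension of a partial injection with a reserved image complement of full cardinality; in (2) the same conjugation of $f$ by $s$ to a partial injection inside $\im(s)$, extended to a permutation that sends $\im(s)\setminus\dom(g)$ outside $\im(s)$. The only difference is that you spell out the cardinality bookkeeping (the reserved sets $B_y''$, $H$, $K$) that the paper compresses into phrases like ``clearly, we can choose $r$ so that $s^{-1}rs = p$.''
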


\begin{proof}
(1) Since $\Omega$ is infinite, we can write $\Omega = \bigcup_{\alpha \in \Omega} \Sigma_\alpha$, where the union is disjoint, and $|\Sigma_\alpha| = |\Omega|$ for
each $\alpha \in \Omega$. Let $s,t \in \T(\Omega)$ be such that $s(\Sigma_\alpha) = \alpha$ and $t(\alpha) \in \Sigma_\alpha$ for each $\alpha \in \Omega$. Then $st=1$.

Now let $p \in \T(\Omega)$ be any element, and for each $\alpha \in \Omega$
let $\Delta_\alpha \subseteq \Omega$ denote the preimage $p^{-1}(\alpha)$ of $\alpha$ under $p$. We can find an injective $q \in \T(\Omega)$ that embeds $\Delta_\alpha$ in $\Sigma_\alpha$, for each $\alpha \in \Omega$, with the property that $|\Sigma_\alpha \setminus p(\Delta_\alpha)| = |\Omega|$ for some $\alpha \in \Omega$. Then $p = sq$. Since $t$ and $q$ are both injective and 
\[|\Omega \setminus t(\Omega)| = |\Omega| = |\Omega \setminus q(\Omega)|,\] 
there exists $r \in \B(\Omega)$ such that $rt(\alpha) = q(\alpha)$ for all $\alpha \in \Omega$. Hence $p = srt \in s\B(\Omega)t$, and so $\T(\Omega) = s\B(\Omega)t$.

(2) Let $s \in \I(\Omega)$ be as in the statement, and let $p \in \I(\Omega)$ be any element.  Then
\[|s(\dom(p))| = |\dom(p)| = |\im(p)| = |s(\im(p))|,\]
and
\[|\im(s) \setminus s(\dom(p))| \leq |\Omega| = |\Omega \setminus \im(s)|.\]
So there exists $r \in \B(\Omega)$ that takes $s(\dom(p))$ to $s(\im(p))$, and takes $\im(s) \setminus s(\dom(p))$ into $\Omega \setminus \im(s)$. Since $\dom(s^{-1}) = \im(s)$, we have $\dom(s^{-1}rs) = \dom(p)$ and $\im(s^{-1}rs) = \im(p)$. Clearly, we can choose $r$ so that $s^{-1}rs = p$, and so $p \in s^{-1}\B(\Omega)s$.
\end{proof}

\begin{proposition}\label{full-transf-sim}
Let $\, \Omega$ be a set. If $\, \Omega$ is infinite, then $\sim_s$ is the universal relation on $\T(\Omega)$, $\PT(\Omega)$, and $\I(\Omega)$. If $\, \Omega$ is finite, then in each of these semigroups there are three $\sim_s$-congruence classes--consisting of even permutations of $\, \Omega$, odd permutations of $\, \Omega$, and $($partial$)$ transformations with image of size $<|\Omega|$.
\end{proposition}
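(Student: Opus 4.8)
The plan is to combine three ingredients: that $\sim_s$ is the least commutative congruence (Theorem~\ref{least-comm}), the generation results of Lemma~\ref{transf-gen}, and the computation of $\sim_s$ in symmetric groups coming from Corollary~\ref{group-sym} and Example~\ref{group-eg}. Throughout I pass freely to the commutative quotient $Q = S/\sim_s$, using that a product of factors can be arbitrarily permuted modulo $\sim_s^1$.

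For infinite $\Omega$ I would argue as follows. In $\T(\Omega)$, take $s,t$ as in Lemma~\ref{transf-gen}(1), so $st=1$ and every element has the form $s\sigma t$ with $\sigma\in\B(\Omega)$; permuting the three factors gives $s\sigma t\sim_s^1\sigma s t=\sigma$, so every element is $\sim_s$-related to a permutation. Since $\Omega$ is infinite, $\B(\Omega)$ is perfect (every permutation of an infinite set is a commutator), so $[\B(\Omega),\B(\Omega)]=\B(\Omega)$, and Corollary~\ref{group-sym} makes all permutations mutually $\sim_s$-equivalent; hence $\sim_s$ is universal. The case of $\I(\Omega)$ is identical, using Lemma~\ref{transf-gen}(2) and $s^{-1}s=1$ to get $s^{-1}\sigma s\sim_s^1\sigma$. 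For $\PT(\Omega)$ I would instead note that $\I(\Omega)$ is a submonoid containing the empty map $0$, so the previous step already yields $0\sim_s 1$ in $\PT(\Omega)$; since $\sim_s$ is a congruence and $p\cdot 0 = 0$ for every $p$, multiplying $0\sim_s 1$ by $p$ gives $p\sim_s 0$, so $\sim_s$ is universal.

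For finite $\Omega$ (say $n=|\Omega|\ge 2$; the cases $n\le 1$ are degenerate) I would first separate the permutations from the rest by the rank, i.e.\ the size of the image. Because $\mathrm{rank}(p_1\cdots p_k)\le\min_i\mathrm{rank}(p_i)$, a product equal to a full permutation forces every factor to be a permutation; hence ``being a permutation'' is $\sim_s^1$-invariant, the permutations form a union of $\sim_s$-classes, and any $\sim_s$-chain between two permutations uses only permutation factors. Thus the restriction of $\sim_s$ to $\B(\Omega)$ is exactly $\sim_s$ computed inside the finite group $\B(\Omega)$, which by Example~\ref{group-eg} (namely $[\B(\Omega),\B(\Omega)]$ is the alternating group) is the parity relation: two classes, even and odd, distinct from each other and from the non-permutations.

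It remains to show that the transformations of rank $<n$ form a single $\sim_s$-class, which I expect to be the main obstacle. I would work in $Q$ and use the standard facts $\T(\Omega)=\langle\B(\Omega),e\rangle$, $\I(\Omega)=\langle\B(\Omega),\epsilon\rangle$, and $\PT(\Omega)=\langle\B(\Omega),e,\epsilon\rangle$, where $e$ is a fixed collapsing map of rank $n-1$ and $\epsilon$ a fixed partial identity of rank $n-1$; then the rank-$<n$ elements are exactly the principal ideal generated by $\{e,\epsilon\}$, so their images fill the ideal of $Q$ generated by $\bar e$ and/or $\bar\epsilon$. The crux is a handful of absorption identities that descend to $Q$. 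Writing $\bar o$ for the common image of the odd permutations: $e\tau=e$ for the transposition $\tau$ of the two points $e$ identifies gives $\bar o\,\bar e=\bar e$; the product of two distinct rank-$(n-1)$ partial identities is one of rank $n-2$, so $\bar\epsilon$ is idempotent and, iterating, every partial identity of rank $<n$ (including $0$) maps to $\bar\epsilon$, while taking $\tau$ supported off a rank-$(n-2)$ partial identity $f$ gives $\tau f=f$ and hence $\bar o\,\bar\epsilon=\bar o\,\bar f=\bar f=\bar\epsilon$; finally $e\epsilon=\epsilon$ and $\epsilon e=e$ force $\bar e=\bar\epsilon$ in $\PT(\Omega)$. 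Since $Q$ is commutative and generated by these idempotent images together with $\{\bar 1,\bar o\}$, every product reduces a chosen generating idempotent times a permutation image back to that idempotent, so the whole rank-$<n$ ideal maps to one element of $Q$; this says precisely that all rank-$<n$ transformations are $\sim_s$-equivalent. The only genuine work is verifying the generating sets and checking these absorption identities; alternatively, one can read the three-class partition off the known classifications of congruences on $\T(\Omega)$, $\PT(\Omega)$, and $\I(\Omega)$, and verify directly that it is the least one with commutative quotient.
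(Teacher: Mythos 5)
Your proof is correct, and while your treatment of the infinite case essentially reproduces the paper's argument, your finite case takes a genuinely different route. For infinite $\Omega$ you use exactly the paper's ingredients -- Lemma~\ref{transf-gen}, Ore's theorem that every permutation of an infinite set is a commutator, and Corollary~\ref{group-sym} -- except for $\PT(\Omega)$, where your observation that $0\in\I(\Omega)$ already satisfies $0\sim_s 1$, so that $p = p\cdot 1 \sim_s p\cdot 0 = 0$ for every $p$, is cleaner than the paper's factorization of each partial map as a product of an element of $\T(\Omega)$ and an element of $\I(\Omega)$. For finite $\Omega$, the paper does not compute anything in the quotient: it invokes the known classification of congruences on $\T(\Omega)$, $\PT(\Omega)$, and $\I(\Omega)$ (\cite[Theorem 6.3.10]{GM}, or \cite[Theorem 2.2]{ABG}), notes via Corollary~\ref{group-sym} and \cite[Theorem 1]{O} that $\sim_s$ must contain the parity relation on $\B(\Omega)$, observes that the unique non-universal congruence doing so is the three-class one, and checks its quotient is commutative, so Theorem~\ref{least-comm} finishes. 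You instead argue from scratch: rank-invariance isolates $\B(\Omega)$ as a union of classes on which $\sim_s$ is literally the parity relation of the group, and then the classical generation facts ($\T(\Omega)=\langle\B(\Omega),e\rangle$, $\I(\Omega)=\langle\B(\Omega),\epsilon\rangle$, $\PT(\Omega)=\langle\B(\Omega),e,\epsilon\rangle$) together with your absorption identities ($e\tau=e$, $\tau f=f$, $e\epsilon=\epsilon$, $\epsilon e=e$, idempotency of $e$ and $\epsilon$) collapse the whole singular ideal to a single element of the commutative quotient. Your route is self-contained -- it avoids the congruence classification entirely -- at the cost of having to verify the generation facts, which are classical but do require proof or citation; the paper's route is shorter given the external reference, but tells you less about why the singular part collapses. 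Both are valid, and your parenthetical fallback (reading the partition off the congruence classification and checking minimality) is precisely the paper's proof.
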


\begin{proof}
First suppose that $\Omega$ is infinite. Let $S$ denote $\T(\Omega)$ or $\I(\Omega)$, and let $p \in S$. Then, by Lemma~\ref{transf-gen}, $p = sqt$, for some $s,t \in S$ such that $st=1$, and some $q \in \B(\Omega)$. Now, by~\cite[Theorem 6]{O}, there exist $r_1,r_2 \in \B(\Omega)$ such that $q = r_1r_2r_1^{-1}r_2^{-1}$. Thus 
\[p = s(r_1r_2r_1^{-1}r_2^{-1})t \sim_s (st)(r_1r_1^{-1})(r_2r_2^{-1}) = 1.\]
So we conclude that the $\sim_s$-equivalence class of $1$ is all of $S$.

Next let $p\in \PT(\Omega)$, let $r_1 \in \T(\Omega) \subseteq \PT(\Omega)$ be such that $r_1$ agrees with $p$ on $\dom(p)$ and acts arbitrarily (e.g., as the identity) on $\Omega \setminus \dom(p)$, and let $r_2 \in \I(\Omega) \subseteq \PT(\Omega)$ be such that $\dom(r_2) = \dom(p)$ and $r_2$ acts as the identity on $\dom(p)$. Then $p = r_1r_2$. By Lemma~\ref{transf-gen}, there exist $s_1,t_1 \in \T(\Omega)$, $s_2,t_2 \in \I(\Omega)$, and $q_1,q_2 \in \B(\Omega)$ such that $p = (s_1q_1t_1)(s_2q_2t_2)$ and $s_1t_1=1=s_2t_2$. Hence $p \sim_s q_1q_2 \in \B(\Omega)$, and so, as in the previous paragraph, $p \sim_s 1$. Thus $\sim_s$ is the universal relation on $\PT(\Omega)$ as well.

Now suppose that $\Omega$ is finite, and let $S$ denote any of $\T(\Omega)$, $\PT(\Omega)$, or $\I(\Omega)$. In this case the classification of the congruences on $S$ is simpler, and can be stated in the same way for any of the three semigroups in question--see~\cite[Theorem 6.3.10]{GM}, or~\cite[Theorem 2.2]{ABG} for an even more succinct account, which we shall not attempt to reproduce here. By Corollary~\ref{group-sym}, if $s,t \in \B(\Omega)$ are such that $st^{-1} \in [\B(\Omega), \B(\Omega)]$, then $s \sim_s t$, both in $\B(\Omega)$ and in $S$. By~\cite[Theorem 1]{O}, $[\B(\Omega), \B(\Omega)]$ is the alternating subgroup of $\B(\Omega)$, and so in $S$, $\sim_s$  must relate all odd permutations of $\Omega$ and relate all even permutations of $\Omega$. By~\cite[Theorem 6.3.10]{GM}, the only non-universal congruence on $S$ that has this property is the congruence that also relates all elements with image of size $< |\Omega|$, i.e., all elements of $S \setminus \B(\Omega)$. It is easy to see that taking the quotient of $S$ by this congruence gives a commutative semigroup (with $3$ elements), and hence $\sim_s$ must be the congruence in question, by Theorem~\ref{least-comm}.
\end{proof}

\section{Injective Function Semigroups}

Given a set $\Omega$, we denote by $\J(\Omega)$ the monoid of all injective functions from $\Omega$ to $\Omega$. The relation $\sim_o \, = \, \sim_c$ in this semigroup is characterized in~\cite[Theorem 7.6]{AKM}, and $\sim_n$ in~\cite[Theorem 5.3]{K}. For $\sim_p^1$ a characterization is available only for countable $\Omega$--see the remarks following~\cite[Lemma 3.3]{AKKM1}. So we shall classify $\sim_p^1$ and $\sim_p$ in $\J(\Omega)$ for all $\Omega$, before doing the same for $\sim_s$. We require additional terminology, some of which we can state in the more general context of the full transformation semigroup $\T(\Omega)$ without much loss of efficiency.

We say that a (directed) graph $E=(E^0,E^1,\ra,\so)$ is \emph{simple} if for all $v, w \in E^0$ there is at most one edge $e \in E^1$ such that $\so(e)=v$ and $\ra(e)=w$ (see Section~\ref{graph-inv-semi-sect} for the notation). In this situation one can view $E^1$ as simply a binary relation on $E^0$, where $(u,v) \in E^1$ if there is an edge with source $u$ and range $v$, for all $u,v \in E^0$. From now on we shall use the notation $E=(E^0,E^1)$ for simple graphs, and interpret $E^1$ in this manner. Note that here we permit loops (i.e., edges of the form $(v,v)$), but this is not a standard convention for simple graphs. 

A \emph{strongly connected component} of a simple graph $E=(E^0,E^1)$ is a (directed) subgraph $F$ maximal with respect to the property that for all distinct $u,v \in F^0$ there is a path from $u$ to $v$. A \emph{weakly connected component} of $E$ is a subgraph which results in a strongly connected component in the graph $(E^0, \overline{E^1})$, where $\overline{E^1}$ is the symmetric closure of $E^1$.

Let $E_a=(E_a^0,E^1_a)$ and $E_b=(E_b^0,E^1_b)$ be two simple graphs. A function $f : E_a^0 \to E_b^0$ is a \emph{graph homomorphism from} $E_a$ \emph{to} $E_b$ if for all $u,v \in E_a^0$, $(u,v) \in E^1_a$ implies that $(f(u),f(v)) \in E^1_b$. Such a function is a \emph{graph isomorphism} if it is bijective, and for all $u,v \in E_a^0$, $(u,v) \in E^1_a$ if and only if $(f(u),f(v)) \in E^1_b$. In this situation we write $E_a \cong E_b$.

When describing conjugacy classes in transformation semigroups on a set $\Omega$, it is often convenient to represent each transformation as a directed graph. (See, e.g.,~\cite[\S 3]{KM}.) Specifically, given $s \in \T(\Omega)$ let $E(s) = (E^0,E^1)$ be the simple graph where $E^0 = \Omega$, and $(\alpha, \beta) \in E^1$ whenever $s(\alpha) = \beta$. 

\begin{definition} \label{conn-comp-def}
Let $\, \Omega$ be a set, $\, \Sigma \subseteq \Omega$ nonempty, and $s \in \T(\Omega)$. We say that $\, \Sigma$ is a \emph{connected component} of $s$ if the following two conditions are satisfied:
\begin{enumerate}
\item[$($\rm{i}$)$] $s(\alpha) \in \Sigma$ if and only if $\alpha \in \Sigma$, for all $\alpha \in \Omega;$ 
\item[$($\rm{ii}$)$] $\Sigma$ has no proper nonempty subset satisfying $\, ($\rm{i}$)$.
\end{enumerate}
\end{definition}

It is easy to see that a connected component of $s \in \T(\Omega)$ corresponds to a weakly connected component in the associated graph $E(s)$. The next lemma gives a stronger version of this observation, as well as a description of the connected components of the elements of $\J(\Omega)$.

\begin{lemma} \label{conn-comp-lemm}
The following hold for any set $\, \Omega$.
\begin{enumerate}
\item[$(1)$] Let $s \in \T(\Omega)$, and $\alpha, \beta \in \Omega$. Then $\alpha$ and $\beta$ belong to the same connected component of $s$ if and only if $s^n(\alpha) = s^m(\beta)$ for some $n,m \in \N$.
\item[$(2)$] Let $s \in \J(\Omega)$, and $\alpha \in \Omega$. Then 
\begin{equation}\label{eq-0} \tag{\dag}
\{s^n(\alpha) \mid n \in \N\}\cup \{\beta \in \Omega \mid \exists n \in \Z^+ \, (s^n(\beta) = \alpha)\}
\end{equation} 
is a connected component of $s$, and every connected component of $s$ is of this form.
\end{enumerate}
\end{lemma}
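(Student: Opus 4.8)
The plan is to prove both statements of Lemma~\ref{conn-comp-lemm} by unwinding the definition of connected component (Definition~\ref{conn-comp-def}) and exploiting injectivity in part~(2). For part~(1), I would first establish the ``if'' direction: assuming $s^n(\alpha) = s^m(\beta)$ for some $n,m \in \N$, I want to show $\alpha$ and $\beta$ lie in the same connected component $\Sigma$. The natural approach is to consider the set $C$ of all $\gamma \in \Omega$ reachable from (or reaching) a common point via iterates of $s$, make precise that this is an equivalence relation, and show each equivalence class satisfies condition~(i) of Definition~\ref{conn-comp-def}, hence contains a connected component; minimality~(ii) then forces the class to equal that component. Concretely, I would define $\alpha \equiv \beta$ iff $s^n(\alpha) = s^m(\beta)$ for some $n,m \in \N$, verify reflexivity and symmetry directly, and check transitivity by applying further iterates of $s$ to line up exponents (if $s^n(\alpha)=s^m(\beta)$ and $s^k(\beta)=s^\ell(\gamma)$, then applying $s^k$ to the first equation and $s^m$ to the second and chaining gives $s^{n+k}(\alpha) = s^{m+k}(\beta) = s^{m+\ell}(\gamma)$). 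For the ``only if'' direction, I would show that each $\equiv$-class $[\alpha]$ satisfies~(i): if $\gamma \in [\alpha]$ then $s(\gamma) \in [\alpha]$ (take the witnessing iterates and bump an exponent), and conversely if $s(\gamma) \in [\alpha]$ then $\gamma \in [\alpha]$ (since $s(\gamma) \equiv s(\gamma)$ trivially links $\gamma$ to $[\alpha]$). Thus each connected component is a union of $\equiv$-classes, but since $\equiv$-classes themselves satisfy~(i), minimality~(ii) shows a connected component cannot properly contain a nonempty class-closed subset, forcing every connected component to be a single $\equiv$-class.

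For part~(2), I would specialize to $s \in \J(\Omega)$ injective and show the displayed set in~\eqref{eq-0}, call it $T_\alpha = \{s^n(\alpha) \mid n \in \N\} \cup \{\beta \mid \exists n \in \Z^+,\ s^n(\beta) = \alpha\}$, is exactly the connected component of $\alpha$. By part~(1), the connected component of $\alpha$ is its $\equiv$-class, so it suffices to prove $T_\alpha = [\alpha]$. The inclusion $T_\alpha \subseteq [\alpha]$ is immediate: forward iterates $s^n(\alpha)$ satisfy $s^n(\alpha) = s^n(\alpha)$, and backward elements $\beta$ with $s^n(\beta) = \alpha = s^0(\alpha)$ also qualify. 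The reverse inclusion $[\alpha] \subseteq T_\alpha$ is where injectivity does the work. Suppose $\gamma \equiv \alpha$, so $s^n(\gamma) = s^m(\alpha)$ for some $n,m \in \N$. I would argue by comparing $n$ and $m$: if $n \le m$, injectivity of $s$ (hence of $s^n$) applied to $s^n(\gamma) = s^n(s^{m-n}(\alpha))$ yields $\gamma = s^{m-n}(\alpha)$, a forward iterate, so $\gamma \in T_\alpha$; if $n > m$, then $s^n(\gamma) = s^m(\alpha)$ rearranges (again using injectivity of $s^m$) to $s^{n-m}(\gamma) = \alpha$ with $n - m \in \Z^+$, placing $\gamma$ in the backward part of $T_\alpha$. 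This case split cleanly recovers the two pieces of the displayed union.

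The main obstacle, and the only genuinely delicate point, is the correct and careful use of injectivity in the reverse inclusion of part~(2): one must cancel the common iterate $s^{\min(n,m)}$ on both sides of $s^n(\gamma) = s^m(\alpha)$, and this cancellation is precisely what fails for general (non-injective) $s$, which is why part~(1) must phrase connectedness symmetrically via arbitrary meeting points rather than as the clean union of a forward and a backward orbit. I would take care to note that $s^k$ is injective for every $k$ whenever $s$ is, justifying the cancellation. Everything else is routine bookkeeping with iterates of $s$ and the equivalence relation $\equiv$. Once part~(1) characterizes connected components as $\equiv$-classes, part~(2) reduces entirely to this cancellation argument, and the statement that ``every connected component of $s$ is of this form'' follows because every $\gamma \in \Omega$ lies in some $\equiv$-class $[\gamma] = T_\gamma$, so the sets $T_\alpha$ exhaust the connected components.
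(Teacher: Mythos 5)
Your proposal is correct and takes essentially the same approach as the paper: your equivalence relation $\equiv$ coincides with the paper's recursively constructed set $\Gamma(\alpha)$ (the forward orbit together with all iterated preimages), and both arguments identify connected components with these classes via the minimality clause (ii) of Definition~\ref{conn-comp-def}. Part (2) is likewise handled identically in both, by cancelling the common iterate $s^{\min(n,m)}$ using injectivity to split the class into the forward orbit and the backward part.
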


\begin{proof}
(1) Clearly, we can find a connected component $\Sigma \subseteq \Omega$ of $s$ such that $\alpha \in \Sigma$. Now suppose that $s^n(\alpha) = s^m(\beta)$ for some $n,m \in \N$. Then $s^n(\alpha) \in \Sigma$, and so $s^m(\beta) \in \Sigma$. Hence $s^{m-1}(\beta) \in \Sigma$ (in case $m>1$), and therefore, by induction, $\beta \in \Sigma$.

Conversely, suppose that $\alpha, \beta \in \Sigma$, for some connected component $\Sigma$ of $s$. Define recursively $\Gamma_0(\alpha) = \{s^n(\alpha) \mid n\in \N\}$, and  $\Gamma_{-m}(\alpha) = s^{-1}(\Gamma_{-(m-1)})$ for all $m > 0$. Also let $\Gamma(\alpha) = \bigcup_{m=0}^{-\infty} \Gamma_m$. Then, clearly, $\Gamma(\alpha) \subseteq \Sigma$, and $\Gamma(\alpha)$ is a connected component of $s$. Therefore $\Gamma(\alpha) = \Sigma$, by Definition~\ref{conn-comp-def}. It follows that $s^m(\beta) \in \Gamma_0(\alpha)$ for some $m \in \N$, and so $s^n(\alpha) = s^m(\beta)$ for some $n \in \N$.

(2) By (1), the set in (\ref{eq-0}) is contained in a connected component of $s$. Since $s$ is injective, the set in (\ref{eq-0}) also contains all $\beta \in \Omega$ such that $s^n(\alpha) = s^m(\beta)$ for some $n,m \in \N$, and hence must be a connected component of $s$, again by (1). Since each $\alpha \in \Omega$ belongs to a connected component of $s$, and, clearly, any such connected component must contain the set in (\ref{eq-0}), it follows that every connected component of $s$ is of this form.
\end{proof}

In particular, every connected component of $s \in \J(\Omega)$ must be countable, and, certainly, such a connected component can contain at most one element that is not in $s(\Omega)$. With that in mind, we can use more precise terminology to describe the connected components of elements of $\J(\Omega)$.

\begin{definition}
Let $\, \Omega$ be a set, $s \in \J(\Omega)$, and $\, \Sigma \subseteq \Omega$ a connected component of $s$. In this context we refer to $\, \Sigma$ as a \emph{cycle} $($or \emph{orbit}$)$ of $s$.

We say that $\, \Sigma$ is a \emph{forward cycle} $($or \emph{forward orbit} or \emph{right ray}$)$ of $s$ if $\, \Sigma$ is infinite and there is an element $\alpha \in \Sigma \setminus s(\Omega)$. In this case, we refer to $\alpha$ as the \emph{initial element} of $\, \Sigma$. If $\, \Sigma$ is infinite but not a forward cycle, then we refer to it as an \emph{open cycle} $($or \emph{open orbit} or \emph{double ray}$)$.

Given two cycles $\, \Sigma_1$ and $\, \Sigma_2$ of $s$, we say that $\, \Sigma_1$ and $\, \Sigma_2$ are \emph{of the same type} if $\, |\Sigma_1| = |\Sigma_2|$, and, in case  $\, |\Sigma_1| = |\Sigma_2| = \aleph_0$, both $\, \Sigma_1$ and $\, \Sigma_2$ are either forward or open.
\end{definition}

The next lemma will help us characterize $\sim_p$ and $\sim_p^1$ in $\J(\Omega)$ for arbitrary $\Omega$.

\begin{lemma}\label{equiv-decomp}
Let $\, \Omega$ be a set, and $s,t \in \J(\Omega)$. For each cycle $\, \Sigma$ of $ts$, let \[\Sigma^s = \left\{ \begin{array}{ll}
s(\Sigma) & \text{if $\, \Sigma$ is a finite or open cycle}\\
s(\Sigma) \cup t^{-1}(\alpha) & \text{if $\, \Sigma$ is a forward cycle with initial element } \alpha
\end{array}\right..\] 
Then sending $\, \Sigma \mapsto \Sigma^s$ defines a bijection between the set of cycles of $ts$ and the set of cycles of $st$. Moreover, in each case $\, \Sigma$ and $\, \Sigma^s$ are of the same type.
\end{lemma}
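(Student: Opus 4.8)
The plan is to realize the correspondence $\Sigma \mapsto \Sigma^s$ as conjugation-by-$s$ at the level of orbits, exploiting the intertwining identities $s\cdot(ts) = (st)\cdot s$ and $t\cdot(st)=(ts)\cdot t$, which hold because $(uv)(x)=u(v(x))$ for all $u,v\in\J(\Omega)$. By induction these give $s\,(ts)^n = (st)^n\, s$ for all $n\in\N$, so $s$ carries the forward $ts$-orbit of any $x$ onto the forward $st$-orbit of $s(x)$. Since $s$ is injective it identifies forward orbits bijectively; the whole subtlety lies in the backward directions, i.e.\ in the behaviour of initial elements, and this is exactly what the forward-cycle correction term $t^{-1}(\alpha)$ in the definition of $\Sigma^s$ is designed to absorb.

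First I would prove well-definedness and type preservation via the three cases of Lemma~\ref{conn-comp-lemm}(2). If $\Sigma$ is finite or open, then $ts$ restricts to a bijection of $\Sigma$ onto itself, so by the intertwining identity $st$ restricts to a bijection of $s(\Sigma)$ onto itself with the same cyclic or bi-infinite structure; a short check using injectivity of $st$ shows $s(\Sigma)$ has no $st$-preimages outside itself, hence is a full connected component of the same type and size. The delicate case is a forward cycle $\Sigma=\{(ts)^n(\alpha):n\in\N\}$ with initial element $\alpha\notin\im(ts)$. Here the decisive observation is that $s(\alpha)$ has an $st$-preimage if and only if $\alpha\in\im(t)$, in which case that preimage is the unique element $\beta=t^{-1}(\alpha)$; and $\beta$ itself has no $st$-preimage, for otherwise $\alpha=(ts)(w)$ for some $w$, contradicting that $\alpha$ is initial. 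Thus $\Sigma^s = s(\Sigma)\cup t^{-1}(\alpha)$ is precisely the forward $st$-orbit of its initial element ($\beta$ if $\alpha\in\im t$, else $s(\alpha)$), hence a forward cycle, again of the same type.

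To obtain a bijection I would introduce the symmetric map $\psi$ sending each cycle $\Theta$ of $st$ to $\Theta^t$, defined by interchanging the roles of $s$ and $t$ (so $\Theta^t=t(\Theta)$ for finite or open $\Theta$, and $\Theta^t=t(\Theta)\cup s^{-1}(\gamma)$ for a forward $\Theta$ with initial element $\gamma$). By the case analysis above applied with $s$ and $t$ swapped, $\psi$ is well-defined and type-preserving, so it suffices to check $\psi\circ\phi=\mathrm{id}$, where $\phi(\Sigma)=\Sigma^s$ (the identity $\phi\circ\psi=\mathrm{id}$ then follows by the same symmetry). For finite and open $\Sigma$ this is immediate, since $t(s(\Sigma))=(ts)(\Sigma)=\Sigma$. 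The main obstacle is the forward case, where one computes $\psi(\phi(\Sigma)) = t(\Sigma^s)\cup s^{-1}(\gamma)$ and must verify that the bookkeeping closes up exactly; the key point is once more that $\alpha$ has no $ts$-preimage, which forces the correcting set $s^{-1}(\gamma)$ to equal $\{\alpha\}$ when $\alpha\notin\im t$, and to be empty (because $t^{-1}(\alpha)\notin\im s$) when $\alpha\in\im t$, in either case recovering $\Sigma$ on the nose. I expect this forward-cycle reconciliation to be the only genuinely delicate step; everything else is a routine consequence of the intertwining identities and the injectivity of $s$ and $t$.
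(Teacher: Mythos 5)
Your proposal is correct and follows essentially the same route as the paper's proof: a case analysis on cycle type using the intertwining relation $st\circ s = s\circ ts$ (the paper writes this elementwise as $st(s(\alpha_i))=s(\alpha_{i+1})$), the same key observations about initial elements of forward cycles (that $t^{-1}(\alpha)$, when nonempty, is the initial element of $\Sigma^s$ and has no $st$-preimage, and that $t^{-1}(\alpha)\notin\im(s)$), and the same symmetric inverse map $\Theta\mapsto\Theta^t$ with mutual inverses checked on one side and the other obtained by symmetry.
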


\begin{proof}
Suppose that $\Sigma$ is an open cycle of $ts$. Then, using Lemma~\ref{conn-comp-lemm}(2), we can write $\Sigma = \{\alpha_i \mid i \in \Z\}$, where $ts(\alpha_i) = \alpha_{i+1}$ for all $i \in \Z$. Hence $\Sigma^s = \{s(\alpha_i) \mid i \in \Z\}$, and $st(s(\alpha_i)) = s(\alpha_{i+1})$ for all $i \in \Z$. It follows that $\Sigma^s$ is an open cycle of $st$.

Next suppose that $\Sigma$ is a finite cycle of $ts$. Since $ts$ is injective, we can write $\Sigma = \{\alpha_0, \dots, \alpha_n\}$ for some $n \in \N$, where $ts(\alpha_i) = \alpha_{i+1\mod n}$ for all $0 \leq i < n$. The same computation as above shows that $\Sigma^s = \{s(\alpha_0), \dots, s(\alpha_n)\}$ is a finite cycle of $st$, of the same cardinality as $\Sigma$.

Finally, suppose that $\Sigma$ is a forward cycle of $ts$. Write $\Sigma = \{\alpha_0, \alpha_1, \dots\}$, where $\alpha_0$ is the initial element, and $ts(\alpha_i) = \alpha_{i+1}$ for all $i \in \N$. Then 
\[\Sigma^s = \{\beta, s(\alpha_0), s(\alpha_1), \dots\},\]
where $t^{-1}(\alpha_0) = \{\beta\}$ in case $t^{-1}(\alpha_0) \neq \emptyset$ (relying on the fact that $t$ is injective), and it is understood that $\beta$ is omitted from $\Sigma^s$ if $t^{-1}(\alpha_0) = \emptyset$. Then $st(\beta) = s(\alpha_0)$, and $st(s(\alpha_i)) = s(\alpha_{i+1})$ for all $i \in \N$. To conclude that $\Sigma^s$ is a forward cycle of $st$, it suffices to show that if $t^{-1}(\alpha_0) \neq \emptyset$, then $(st)^{-1}(\beta) = \emptyset$. Thus suppose that $t^{-1}(\alpha) \neq \emptyset$ and there exists $\gamma \in \Omega$ such that $st(\gamma) = \beta$. Then $ts(t(\gamma)) = t(\beta) = \alpha_0$, which contradicts $\alpha_0$ being the initial element in the forward cycle $\Sigma$ of $ts$. Hence if $t^{-1}(\alpha) \neq \emptyset$, then the single element of $t^{-1}(\alpha)$ is initial in $\Sigma^s$. Therefore $\Sigma^s$ is a forward cycle of $st$.

We have shown that for each cycle $\Sigma$ of $ts$,  $\Sigma^s$ is a cycle of $st$ of the same type. Now, for each cycle $\Gamma$ of $st$, define
\[\Gamma^t = \left\{ \begin{array}{ll}
t(\Gamma) & \text{if $\, \Gamma$ is a finite or open cycle}\\
t(\Gamma) \cup s^{-1}(\alpha) & \text{if $\, \Gamma$ is a forward cycle with initial element } \alpha
\end{array}\right..\] 
Then, by symmetry, each $\Gamma^t$ is a cycle of $ts$, of the same type as $\Gamma$. To conclude the proof it suffices to show that $(\Sigma^{s})^{t} = \Sigma$ for each cycle $\Sigma$ of $ts$, and $(\Gamma^{t})^{s} = \Gamma$ for each cycle $\Gamma$ of $st$. Again, given the symmetry of the situation, we shall only treat the cycles of $ts$.

Let $\Sigma$ be a cycle of $ts$. If $\Sigma$ is finite or open, then $(\Sigma^{s})^{t} = ts(\Sigma) = \Sigma$. Hence we may assume that $\Sigma$ is a forward cycle, and write $\Sigma = \{\alpha_0, \alpha_1, \dots\}$, where $\alpha_0$ is the initial element, and $ts(\alpha_i) = \alpha_{i+1}$ for all $i \in \N$. As before,
\[\Sigma^{s} = \{\beta, s(\alpha_0), s(\alpha_1), \dots\},\]
where $t^{-1}(\alpha_0) = \{\beta\}$ in case $t^{-1}(\alpha_0) \neq \emptyset$, and $\beta$ is omitted otherwise. If $t^{-1}(\alpha_0) = \emptyset$, then $s^{-1}(s(\alpha_0)) = \{\alpha_0\}$ gives
\[(\Sigma^{s})^{t} = \{\alpha_0, ts(\alpha_0), ts(\alpha_1), \dots\} = \{\alpha_0, \alpha_1, \dots\} = \Sigma.\] 
So we may assume that $t^{-1}(\alpha_0) \neq \emptyset$. As before, it is easy to see that $s^{-1}(\beta) = \emptyset$, since otherwise there would exist $\gamma \in \Omega$ such that $ts(\gamma) = \alpha_0$. Thus 
\[(\Sigma^{s})^{t} = \{t(\beta), ts(\alpha_0), ts(\alpha_1), \dots\} = \{\alpha_0, \alpha_1, \dots\} = \Sigma.\] Therefore, in all cases, $(\Sigma^{s})^{t} = \Sigma$, as desired.
\end{proof}

We are now ready to generalize the aforementioned characterization of $\sim_p^1$ in $\J(\Omega)$, with countable $\Omega$, from~\cite{AKKM1}, and extend~\cite[Theorem 5.3]{K}, which characterizes $\sim_n$, while also giving an alternative proof of that result.

\begin{theorem} \label{equiv-rel}
Let $\, \Omega$ be a set, and $s,t \in \J(\Omega)$. Then the following are equivalent.
\begin{enumerate}
\item[$(1)$] $s \sim_p t$.
\item[$(2)$] $s \sim_p^1 t$.
\item[$(3)$] $s \sim_n t$.
\item[$(4)$] $s = ptp^{-1}$ for some $p \in \B(\Omega)$.
\item[$(5)$] $E(s) \cong E(t)$.
\item[$(6)$] There is a bijection between the set of cycles of $s$ and the set of cycles of $t$, that sends each cycle to one of the same type.
\end{enumerate}
\end{theorem}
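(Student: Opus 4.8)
The plan is to prove the cyclic chain of implications $(1) \Rightarrow (6) \Rightarrow (5) \Rightarrow (4) \Rightarrow (3) \Rightarrow (2) \Rightarrow (1)$, so that all six conditions become equivalent. The tail $(3) \Rightarrow (2) \Rightarrow (1)$ requires no work: it is immediate from the general containments $\sim_n \, \subseteq \, \sim_p^1 \, \subseteq \, \sim_p$ recorded in Proposition~\ref{conj-compar} together with the remark that $\sim_n \, \subseteq \, \sim_p^1$.

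For the main step, $(1) \Rightarrow (6)$, I would first observe that the relation asserted in $(6)$---the existence of a type-preserving bijection between the cycle sets of $s$ and $t$---is itself an equivalence relation on $\J(\Omega)$ (reflexive via the identity bijection, symmetric via its inverse, transitive via composition). Since $\J(\Omega)$ is already a monoid, a single $\sim_p^1$-transition $s = ab$, $t = ba$ has both $a, b \in \J(\Omega)^1 = \J(\Omega)$, so Lemma~\ref{equiv-decomp} (applied with its two functions taken to be $b$ and $a$) supplies exactly a type-preserving bijection between the cycles of $s = ab$ and those of $t = ba$. Thus each elementary $\sim_p^1$-step stays within a single class of the relation in $(6)$; as $\sim_p$ is the transitive closure of $\sim_p^1$ and $(6)$ is already transitive, $(1) \Rightarrow (6)$ follows.

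The graph-theoretic links come next. For $(6) \Rightarrow (5)$ I would use that the weakly connected components of $E(s)$ are precisely the cycles of $s$ (via Lemma~\ref{conn-comp-lemm}), that for injective $s$ each component is a finite cycle, a forward ray, or a double ray, and that two such components are graph-isomorphic if and only if they are of the same type (equal cardinality, and, when countably infinite, both forward or both open); assembling chosen component isomorphisms then yields $E(s) \cong E(t)$. For $(5) \Rightarrow (4)$ I would note that, because $s$ and $t$ are total with every vertex of $E(s)$ and $E(t)$ of out-degree one, a graph isomorphism $p : E(t) \to E(s)$ is exactly a permutation $p \in \B(\Omega)$ satisfying $s(p(\alpha)) = p(t(\alpha))$ for all $\alpha \in \Omega$, i.e.\ $sp = pt$, i.e.\ $s = ptp^{-1}$. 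Finally, $(4) \Rightarrow (3)$ is a direct check against Definition~\ref{sim-def2}: taking the witnesses there to be $p$ and $p^{-1}$ (both lying in $\B(\Omega) \subseteq \J(\Omega)$), the relation $s = ptp^{-1}$ gives $sp = pt$, $p^{-1}s = tp^{-1}$, $p^{-1}sp = t$, and $ptp^{-1} = s$, which are precisely the four equations defining $s \sim_n t$.

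I expect the main obstacle to be the classification underpinning $(5) \Leftrightarrow (6)$, namely confirming that the isomorphism type of a connected component of an injective transformation is captured exactly by the notion of ``type,'' with the forward-ray case---where the initial element alone lacks a preimage---demanding the most care in matching components. Everything else either follows from previously established results (most crucially Lemma~\ref{equiv-decomp}, which does the heavy lifting in the single-step case) or is a routine unwinding of definitions.
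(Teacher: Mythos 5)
Your proposal is correct and follows essentially the same route as the paper: the identical cyclic chain $(1) \Rightarrow (6) \Rightarrow (5) \Rightarrow (4) \Rightarrow (3) \Rightarrow (2) \Rightarrow (1)$, with Lemma~\ref{equiv-decomp} doing the work for a single $\sim_p^1$-transition, transitivity of the type-preserving-bijection relation handling $\sim_p$, and the same direct verifications for the graph-isomorphism and $\sim_n$ steps. The only differences are expository (you spell out details, such as the four equations for $\sim_n$ and the component-matching in $(6) \Rightarrow (5)$, that the paper leaves as immediate).
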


\begin{proof}
(1) $\Rightarrow$ (6) First suppose that $s \sim_p^1 t$. Then, by Lemma~\ref{equiv-decomp}, there is a bijection between the sets of cycles of $s$ and $t$, which preserves the cycle types. Since the existence of such bijections is transitive, it follows that if $s \sim_p t$, then (6) holds.

(6) $\Rightarrow$ (5) This follows from the easy observation that two cycles of the same type are isomorphic as graphs.

(5) $\Rightarrow$ (4) Suppose that $f: E(s) \to E(t)$ is a graph isomorphism. Then, in particular, $f \in \B(\Omega)$. Now let $\alpha, \beta \in \Omega$ be such that $s(\alpha) = \beta$. Then $t(f(\alpha)) = f(\beta)$, and so $f^{-1}tf(\alpha) = \beta$. Since $\alpha \in \Omega$ was arbitrary, we conclude that $s = ptp^{-1}$, where $p = f^{-1}$.

(4) $\Rightarrow$ (3) If $s = ptp^{-1}$ for some $p \in \B(\Omega)$, then it follows immediately from Definition~\ref{sim-def2} that $s \sim_n t$.

(3) $\Rightarrow$ (2) By Proposition~\ref{conj-compar} and the subsequent remark, $\sim_n \, \subseteq \, \sim_p^1$ in any semigroup.

(2) $\Rightarrow$ (1) This follows immediately from Definition~\ref{sim-def1}.
\end{proof}

Unlike $\T(\Omega)$, $\I(\Omega)$, and $\PT(\Omega)$, there does not seem to be a classification of the congruences of $\J(\Omega)$ in the literature. However, we can use other results about this semigroup to describe $\sim_s$.

\begin{theorem} \label{inj-sym-thrm}
Let $\, \Omega$ be a set, and $s,t \in \J(\Omega)$. If $\, \Omega$ is infinite, then $s \sim_s t$ if and only if $\, |\Omega \setminus s(\Omega)| = |\Omega \setminus t(\Omega)|$. If $\, \Omega$ is finite, and hence $\J(\Omega) =  \B(\Omega)$, then $s \sim_s t$ if and only if $st^{-1}$ is an even permutation. 
\end{theorem}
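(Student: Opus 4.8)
The plan is to isolate the right numerical invariant and then show it is complete for $\sim_s$. The finite case is immediate: when $\Omega$ is finite every injection is already a bijection, so $\J(\Omega) = \B(\Omega)$ is the symmetric group, and Corollary~\ref{group-sym} together with the fact that $[\B(\Omega),\B(\Omega)]$ is the alternating subgroup (\cite[Theorem 1]{O}, exactly as in Example~\ref{group-eg}) yields $s \sim_s t$ iff $st^{-1}$ is even. So the substance lies in the infinite case, where the relevant invariant is the \emph{corank} $d(s) := |\Omega \setminus s(\Omega)|$, and I would prove the two implications separately.

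For necessity, I would show that $d$ is a semigroup homomorphism from $\J(\Omega)$ into the commutative monoid of cardinals $\leq |\Omega|$ under addition. Using the convention $(st)(\alpha) = s(t(\alpha))$, injectivity of $s$ gives $s(\Omega \setminus t(\Omega)) = s(\Omega) \setminus s(t(\Omega))$, and hence the disjoint decomposition $\Omega \setminus s(t(\Omega)) = (\Omega \setminus s(\Omega)) \sqcup s(\Omega \setminus t(\Omega))$, so that $d(st) = d(s) + d(t)$. Since the target is commutative, the relation $s \equiv t \iff d(s) = d(t)$ is a congruence with commutative quotient, and therefore $\sim_s$ is contained in it by Theorem~\ref{least-comm} (equivalently Corollary~\ref{img-cor}); that is, $s \sim_s t$ forces $d(s) = d(t)$.

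For sufficiency I would assemble three steps. (a) Every $p \in \B(\Omega)$ satisfies $p \sim_s 1$: by Ore's theorem (\cite[Theorem 6]{O}, as used in Proposition~\ref{full-transf-sim}) write $p = r_1 r_2 r_1^{-1} r_2^{-1}$ with $r_i \in \B(\Omega)$, and then permuting the four factors gives the single step $p \sim_s^1 r_1 r_1^{-1} r_2 r_2^{-1} = 1$. (b) If $\im(s) = \im(t)$, then the partial inverse yields $r := s^{-1}t \in \B(\Omega)$ with $t = sr$, so by (a) and the congruence property (Lemma~\ref{Leroy-lemma2}(2)) we get $t = sr \sim_s s\cdot 1 = s$. (c) If $d(s) = d(t)$, choose $p \in \B(\Omega)$ with $p(\im(t)) = \im(s)$, which exists because $|\im(t)| = |\Omega| = |\im(s)|$ while the two complements have the common cardinality $d(t) = d(s)$; then $pt \in \J(\Omega)$ has image $\im(s)$, so $pt \sim_s t$ by (a) and the congruence property, and $pt \sim_s s$ by (b). Transitivity completes the argument.

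The main obstacle, or at least the point requiring the most care, is that one cannot simply import the universality of $\sim_s$ established for $\T(\Omega)$ and $\I(\Omega)$ in Proposition~\ref{full-transf-sim}, because $\J(\Omega)$ is a proper submonoid and its $\sim_s$ may a priori be strictly finer; every reduction must be carried out using products of elements of $\J(\Omega)$ alone. This is precisely why step (a) is done by an explicit commutator computation inside $\J(\Omega)$ rather than by passing to a quotient, and why the corank, which genuinely separates $\sim_s$-classes here, replaces the trivial invariant available in the larger monoids. The remaining verifications, namely the cardinal bookkeeping in the homomorphism identity and the checks that $s^{-1}t$ and the image-matching permutation $p$ lie in $\B(\Omega)$, are routine.
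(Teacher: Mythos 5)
Your proof is correct, and while the necessity direction coincides with the paper's (both rest on the additivity of corank under composition of injections, i.e.\ $|\Omega \setminus st(\Omega)| = |\Omega \setminus s(\Omega)| + |\Omega \setminus t(\Omega)|$, which the paper cites as \cite[Lemma 5]{M1}), your sufficiency argument takes a genuinely different and substantially shorter route. The paper splits into cases: for corank $0$ it uses Ore's theorem $\B(\Omega) = [\B(\Omega),\B(\Omega)]$ together with Corollary~\ref{group-sym}; for positive corank and countable $\Omega$ it invokes the structural result \cite[Theorem 9]{M1} to write $s = qtq^{-1}rpr^{-1}$ (whence $s \sim_s^1 tp \sim_s^1 t$); and for uncountable $\Omega$ it performs a delicate decomposition of $\Omega$ into $|\Omega|$ countable blocks, each containing one forward cycle of $s$ (matched with one of $t$), applying the countable case blockwise. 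You bypass all of this: your step (a) (every $p \in \B(\Omega)$ satisfies $p \sim_s^1 1$, by permuting the four factors of Ore's single-commutator expression $p = r_1r_2r_1^{-1}r_2^{-1}$, all of which lie in $\J(\Omega)$), step (b) (injections with equal image differ by right multiplication by the element $s^{-1}t \in \B(\Omega)$), and step (c) (equal corank allows an image-matching $p \in \B(\Omega)$, since $|\im(s)|=|\im(t)|=|\Omega|$ and the complements are equinumerous) together yield $s \sim_s pt \sim_s t$ uniformly for every infinite $\Omega$, with no countable/uncountable dichotomy, no appeal to \cite[Theorem 9]{M1}, and no cycle-type analysis at all. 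What the paper's heavier machinery buys is essentially nothing extra for this theorem (it reflects the route through previously developed results on $\J(\Omega)$, including Theorem~\ref{equiv-rel}); your argument is more elementary and self-contained, needing from the literature only Ore's theorem \cite[Theorem 6]{O}, which the paper uses anyway. You are also right to flag, and correctly handle, the one genuine pitfall: universality of $\sim_s$ on $\T(\Omega)$ or $\I(\Omega)$ cannot be imported into the submonoid $\J(\Omega)$, since all factors witnessing $\sim_s^1$ must themselves be injective, and your computations respect this throughout.
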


\begin{proof}
If $\Omega$ is finite, then $\J(\Omega) =  \B(\Omega)$, and $[\B(\Omega), \B(\Omega)]$ is the alternating subgroup of $\B(\Omega)$, by~\cite[Theorem 1]{O}. So, in this case, $s \sim_s t$ if and only if $st^{-1}$ is an even permutation, Corollary~\ref{group-sym}. We may therefore assume that $\Omega$ is infinite.

Suppose that $s \sim_s^1 t$. Then there exist $n \in \Z^+$, $p_1, \dots, p_n \in \J(\Omega)$, and $f \in \B(\{1, \dots, n\})$ such that $s=p_1\cdots p_{n}$ and $t = p_{f(1)}\cdots p_{f(n)}$. It is well-known (see, e.g., \cite[Lemma 5]{M1}) and easy to show that 
\[\sum_{i=1}^n |\Omega \setminus p_i(\Omega)| = |\Omega \setminus p_1\cdots p_{n}(\Omega)|\]
for any $p_1, \dots, p_n \in \J(\Omega)$. Hence 
\[|\Omega \setminus s(\Omega)| = \sum_{i=1}^n |\Omega \setminus p_i(\Omega)| = |\Omega \setminus t(\Omega)|.\]
It follows, by the transitivity of equality, that if $s \sim_s t$, then $|\Omega \setminus s(\Omega)| = |\Omega \setminus t(\Omega)|$.

Conversely, suppose that $|\Omega \setminus s(\Omega)| = |\Omega \setminus t(\Omega)|$. If this cardinal is $0$, then $s,t \in \B(\Omega)$. In this case $s,t \in [\B(\Omega), \B(\Omega)]$, by~\cite[Theorem 6]{O}, and hence $s \sim_s^1 t$, by Corollary~\ref{group-sym}. We may therefore assume that $s,t \in \J(\Omega)\setminus \B(\Omega)$. It is easy to see that there is a one-to-one correspondence between $\Omega \setminus s(\Omega)$ and forward cycles of $s$ (see, e.g., \cite[Lemma 4]{M1}). Therefore $s$, and likewise $t$, must have at least one forward, and hence infinite, cycle. 

Now suppose that $\Omega$ is countably infinite, and let $p \in \B(\Omega)$ be any element having at least one infinite cycle. Then, according to~\cite[Theorem 9]{M1}, there exist $q,r \in \B(\Omega)$ such that $s=qtq^{-1}rpr^{-1}$. Hence $s \sim_s^1 tp$. Likewise, $t \sim_s^1 tp$, and so $s \sim_s t$. We may therefore suppose that $\Omega$ is uncountable (and that $s,t \in \J(\Omega)\setminus \B(\Omega)$).

For each $p \in \J(\Omega)$ let $\Upsilon_p$ denote the (cardinal) number the forward cycles of $p$, let $\{\Sigma^p_{\alpha} \subseteq \Omega \mid \alpha \in \Upsilon_p\}$ be the set of the forward cycles of $p$, let $\Phi_p = \bigcup_{\alpha \in \Upsilon_p} \Sigma^p_{\alpha}$, and let $\Xi_p = \Omega \setminus \Phi_p$. Since $|\Omega \setminus s(\Omega)| = |\Omega \setminus t(\Omega)|$, as mentioned above, we must have $\Upsilon_s = \Upsilon_t$, and hence $|\Phi_s| = |\Phi_t|$ (as each forward cycle has cardinality $\aleph_0$). If $|\Phi_s| = |\Phi_t| < |\Omega|$, then $|\Xi_s| = |\Omega| = |\Xi_t|$. In this case, we can find $p \in \B(\Omega)$ that takes $\Sigma^s_{\alpha}$ bijectively to $\Sigma^t_{\alpha}$, in such a way that $p^{-1}tp$ and $s$ agree on $\Sigma^s_{\alpha}$, for each $\alpha \in \Upsilon_s = \Upsilon_t$. Then $\Phi_{p^{-1}tp} = \Phi_s$ and $\Xi_{p^{-1}tp} = \Xi_s$. Since $p^{-1}tp$ and $s$ act as permutations on $\Xi_s$, we can find a $q \in \B(\Omega)$ that acts as the identity on $\Phi_s$, such that $s = p^{-1}tpq$. As discussed before, by~\cite[Theorem 6]{O} (which says that $\B(\Omega) = [\B(\Omega), \B(\Omega)]$), $q \sim_s^1 1$. Thus $s =   p^{-1}tpq \sim_s^1 tq \sim_s^1 t$, by Lemma~\ref{Leroy-lemma2}(1). We may therefore assume that $|\Phi_s| = |\Phi_t| = |\Omega|$. 

Since $\Omega$ is uncountable, and each forward cycle is countable, we have $\Upsilon_s = |\Omega| = \Upsilon_t$. Write $\Xi_s = \bigcup_{\alpha \in \Upsilon_s} \Gamma^s_{\alpha}$ and $\Xi_t = \bigcup_{\alpha \in \Upsilon_t} \Gamma^t_{\alpha}$, where each union is disjoint, and each $\Gamma^s_{\alpha}$ and $\Gamma^t_{\alpha}$ is countable (possibly empty), and consists of finite or open cycles of $s$, respectively $t$ (with every such cycle being contained in $\Xi_s$, respectively $\Xi_t$). So 
\[\Omega = \bigcup_{\alpha \in \Upsilon_s} (\Sigma^s_{\alpha} \cup \Gamma^s_{\alpha}) = \bigcup_{\alpha \in \Upsilon_t} (\Sigma^t_{\alpha} \cup \Gamma^t_{\alpha}),\] 
with all the unions disjoint, and
\[|\Sigma^s_{\alpha} \cup \Gamma^s_{\alpha}| = \aleph_0 = |\Sigma^t_{\alpha} \cup \Gamma^t_{\alpha}|\]
for each $\alpha \in \Upsilon_s = \Upsilon_t$. Thus we can find $p \in \B(\Omega)$ such that $p(\Sigma^s_{\alpha} \cup \Gamma^s_{\alpha}) = \Sigma^t_{\alpha} \cup \Gamma^t_{\alpha}$ for each $\alpha$. Let $s_{\alpha}$, respectively $p_{\alpha}$, denote the restriction of $s$, respectively $p$, to $\Sigma^s_{\alpha} \cup \Gamma^s_{\alpha}$, and let $t_{\alpha}$ denote the restriction of $t$ to $\Sigma^t_{\alpha} \cup \Gamma^t_{\alpha}$, for each $\alpha \in \Upsilon_s = \Upsilon_t$. Then $s_{\alpha}, p_{\alpha}^{-1}t_{\alpha}p_{\alpha} \in \J(\Sigma^s_{\alpha} \cup \Gamma^s_{\alpha})\setminus \B(\Sigma^s_{\alpha} \cup \Gamma^s_{\alpha})$, and 
\[|(\Sigma^s_{\alpha} \cup \Gamma^s_{\alpha}) \setminus s_{\alpha}(\Sigma^s_{\alpha} \cup \Gamma^s_{\alpha})| = 1 = |(\Sigma^s_{\alpha} \cup \Gamma^s_{\alpha}) \setminus p_{\alpha}^{-1}t_{\alpha}p_{\alpha}(\Sigma^s_{\alpha} \cup \Gamma^s_{\alpha})|\]
for each $\alpha$. Hence, by the countable $\Omega$ case (using \cite[Theorem 9]{M1}), there exist $q_{\alpha}, r_{\alpha}, x_{\alpha} \in \B(\Sigma^s_{\alpha} \cup \Gamma^s_{\alpha})$ such that 
\[s_{\alpha}=q_{\alpha}(p_{\alpha}^{-1}t_{\alpha}p_{\alpha})q_{\alpha}^{-1}r_{\alpha}x_{\alpha}r_{\alpha}^{-1}\] 
for each $\alpha$. Letting $q,r,x \in \B(\Omega)$ be such that the restriction to each $\Sigma^s_{\alpha} \cup \Gamma^s_{\alpha}$ is $q_{\alpha}$, $r_{\alpha}$, $x_{\alpha}$, respectively, we have $s=q(p^{-1}tp)q^{-1}rxr^{-1}$. As before, $x \sim_s^1 1$, and so $s \sim_s t$.
\end{proof}

According to~\cite[Theorem 7.6]{AKM}, for any set $\Omega$ and any $s,t \in \J(\Omega)$, we have $s \sim_o t$ if and only if $s \sim_c t$ if and only if $s$ and $t$ have the same (cardinal) number of infinite cycles, open cycles, and finite cycles of each size. So for $\J(\Omega)$ each of the relations $\sim_n \, = \, \sim_p$, $\sim_o \, = \, \sim_c$, and $\sim_s$ conveys a very natural piece of information about the elements.

\section{Surjective Function Semigroups}

Given a set $\Omega$, we denote by $\SR(\Omega)$ the monoid of all surjective functions from $\Omega$ to $\Omega$. If $\Omega$ is finite, then $\SR(\Omega) = \B(\Omega) = \J(\Omega)$, and so the relations in Definitions~\ref{sim-def1}, \ref{symm-conj-def}, and~\ref{sim-def2} can be classified completely (see Theorems~\ref{equiv-rel} and~\ref{inj-sym-thrm}, and use the fact that all the relations in Definitions~\ref{sim-def1} and~\ref{sim-def2} reduce to group-conjugacy in $\B(\Omega)$). For arbitrary $\Omega$, the relation $\sim_n$ on $\SR(\Omega)$ is described in~\cite[Theorem 2.39]{ABKKMM}. It appears, however, that other sorts of conjugacy relations and congruences on this semigroup have not been studied much before in the infinite case. 

It seems that a full classification of $\sim_p $-equivalence classes or $\sim_s$-equivalence classes in infinite $\SR(\Omega)$ would take a fair amount of work to obtain, particularly since, unlike $\T(\Omega)$, $\I(\Omega)$, $\PT(\Omega)$, and $\J(\Omega)$, there is not a wealth of literature about $\SR(\Omega)$ to exploit. So we shall not attempt such classifications here. However, using Theorem~\ref{least-comm}, we can quickly obtain a rough idea about what $\sim_s$ looks like in $\SR(\Omega)$. In particular, unlike the case of $\T(\Omega)$, $\I(\Omega)$, and $\PT(\Omega)$ (Proposition~\ref{full-transf-sim}), the relation $\sim_s$ is very much nontrivial for infinite $\SR(\Omega)$.

We begin with some notation and a technical lemma.

\begin{definition}
Let $\, \Omega$ be a set, and $s \in \T(\Omega)$. Define
\[N(s) = \{\alpha \in \Omega \mid \exists \beta \in \Omega\setminus \{\alpha\} \ (s(\alpha) = s(\beta))\},\]
\[C(s) = \{\alpha \in \Omega \mid |s^{-1}(\alpha)| >1\},\]
and
\[m(s) = \mathrm{sup}\{|s^{-1}(\alpha)| \mid \alpha \in \Omega\}.\]
We say that $s$ \emph{achieves} $m(s)$ if $m(s) = |s^{-1}(\alpha)|$ for some $\alpha \in \Omega$.
\end{definition}

\begin{lemma} \label{surj-sym-tech-lem1}
Let $\, \Omega$ be a set, and $s,t \in \SR(\Omega)$. Then the following hold.
\begin{enumerate}
\item[$(1)$] $|N(st)| = |N(t)| + |N(s)\setminus C(t)|$.
\item[$(2)$] $|C(st)| = |C(s)| + |C(t) \setminus N(s)|$.
\item[$(3)$] If either $N(s)$ or $N(t)$ is infinite, then $\, |N(st)| = \max\{|N(s)|,|N(t)|\}$.
\item[$(4)$] $m(s), m(t) \leq m(st) \leq m(s)\cdot m(t)$.
\item[$(5)$] If either $m(s)$ or $m(t)$ is infinite, then $m(st) = \max\{m(s),m(t)\}$.
\item[$(6)$] If either $s$ or $t$ has a preimage of size $m(st)$, then so does $st$. If $m(st)$ is a regular cardinal, then the converse holds as well. 
\end{enumerate}
\end{lemma}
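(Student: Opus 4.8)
The plan is to drive everything from the single fiber identity
\[(st)^{-1}(\gamma) \;=\; \bigsqcup_{\delta \in s^{-1}(\gamma)} t^{-1}(\delta) \qquad (\gamma \in \Omega),\]
which holds because $st$ is the composite $\alpha \mapsto s(t(\alpha))$, together with the bookkeeping principle that $\alpha \in N(s)$ exactly when $s(\alpha) \in C(s)$. Since $s$ is surjective, sending each $\gamma \notin C(s)$ to its unique $s$-preimage is a bijection $\Omega \setminus C(s) \to \Omega \setminus N(s)$ (inverse $\delta \mapsto s(\delta)$), and likewise for $t$; this is what lets me pass between counting ``source'' points and ``target'' points off the collision sets. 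The first step is just to record these two facts, since (1) and (2) are careful partitions built on them.

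For (1) I would split $N(st)$ according to whether $\alpha \in N(t)$. If $\alpha \in N(t)$, then $t(\alpha)=t(\beta)$ for some $\beta \neq \alpha$, so $\alpha \in N(st)$; this contributes exactly $N(t)$. If $\alpha \notin N(t)$, then $t$ is injective at $\alpha$, and since $t$ is surjective the values $\{t(\beta): \beta \neq \alpha\}$ are precisely $\Omega \setminus \{t(\alpha)\}$, so $\alpha \in N(st)$ iff $t(\alpha) \in N(s)$; the bijection $\alpha \mapsto t(\alpha)$ from $\Omega \setminus N(t)$ onto $\Omega \setminus C(t)$ then identifies this part with $N(s)\setminus C(t)$, yielding $|N(st)| = |N(t)| + |N(s)\setminus C(t)|$. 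Statement (2) is the dual: split $C(st)$ according to whether $\gamma \in C(s)$. The fiber identity gives $C(s) \subseteq C(st)$, while for $\gamma \notin C(s)$ with unique $s$-preimage $\delta$ one has $\gamma \in C(st)$ iff $\delta \in C(t)$, and the bijection $\gamma \mapsto \delta$ identifies these $\gamma$ with $C(t)\setminus N(s)$.

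Parts (3)--(5) are then cardinal arithmetic. For (3), (1) gives $|N(st)| \le |N(t)| + |N(s)| = \max\{|N(s)|,|N(t)|\}$ once one side is infinite; for the reverse inequality I would combine $|N(st)| \ge |N(t)|$, $|N(st)| \ge |N(s)\setminus C(t)|$, and $|N(s)\cap C(t)| \le |C(t)| \le |N(t)|$ — the last because $t$ maps $N(t)$ onto $C(t)$ — to deduce $|N(s)| \le |N(st)|$, hence $\max\{|N(s)|,|N(t)|\} \le |N(st)|$. For (4), the fiber identity gives $|(st)^{-1}(\gamma)| = \sum_{\delta \in s^{-1}(\gamma)} |t^{-1}(\delta)|$, which is at most $m(s)\cdot m(t)$ and at least $|s^{-1}(\gamma)|$ since each inner fiber is nonempty; this yields $m(st) \le m(s)m(t)$ and $m(st) \ge m(s)$, and taking $\gamma = s(\delta)$ shows $(st)^{-1}(\gamma) \supseteq t^{-1}(\delta)$, whence $m(st) \ge m(t)$. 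Statement (5) is the squeeze $\max\{m(s),m(t)\} \le m(st) \le m(s)m(t) = \max\{m(s),m(t)\}$, valid once one factor is infinite and both are at least $1$.

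Finally, (6). The forward direction is immediate from the two lower bounds above: a fiber of $s$, resp.\ $t$, of size $m(st)$ forces $m(s)=m(st)$, resp.\ $m(t)=m(st)$, and produces a composite fiber of size $\ge m(st)$, hence $= m(st)$. For the converse, let $\kappa = m(st)$ be regular and suppose $|(st)^{-1}(\gamma_0)| = \kappa$; the fiber identity writes $\kappa$ as a sum of $|s^{-1}(\gamma_0)| \le m(s) \le \kappa$ summands, each of size $|t^{-1}(\delta)| \le m(t) \le \kappa$. I expect this to be the one genuinely delicate step: I would invoke that a regular cardinal is not a sum of fewer than $\kappa$ cardinals all strictly below $\kappa$ (i.e.\ $\mathrm{cf}(\kappa) = \kappa$), forcing either the number of summands to equal $\kappa$, so $|s^{-1}(\gamma_0)| = \kappa$ and $s$ achieves $\kappa$, or some single summand to equal $\kappa$, so some $|t^{-1}(\delta)| = \kappa$ and $t$ achieves $\kappa$. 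The regularity hypothesis is essential rather than cosmetic: $m(s) = m(t) = 2$ can give $m(st) = 4$ with neither $s$ nor $t$ having a fiber of size $4$, and $4$ is not regular.
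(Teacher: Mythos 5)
Your proposal is correct and follows essentially the same route as the paper's proof: the same disjoint-fiber identity $(st)^{-1}(\gamma)=\bigcup_{\delta\in s^{-1}(\gamma)}t^{-1}(\delta)$, the same partitions of $N(st)$ and $C(st)$ (with the bijections induced by injectivity of $t$ off $N(t)$ and of $s$ off $N(s)$) for (1)--(2), the same Cantor--Bernstein squeeze for (3)--(5), and the same cofinality argument for (6). The only cosmetic difference is in (3), where the paper obtains $|N(s)|\le|N(st)|$ directly from $t^{-1}(N(s))\subseteq N(st)$ and surjectivity of $t$, whereas you route it through $|N(s)\cap C(t)|\le|C(t)|\le|N(t)|$ plus cardinal arithmetic; both are valid.
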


\begin{proof}
(1) Since $t$ is surjective, we have 
\[N(st) = N(t) \cup t^{-1}(N(s)) = N(t) \cup t^{-1}(N(s)\setminus C(t)),\]
where the last union is disjoint. Since $t^{-1}$ is an injective function on $\Omega \setminus C(t)$, we have 
\[|t^{-1}(N(s)\setminus C(t))| = |N(s)\setminus C(t)|,\] and so the desired formula follows.

(2) Since $t$ is surjective, we have
\[C(st) = C(s) \cup s(C(t)) = C(s) \cup s(C(t) \setminus N(s)),\]
where the last union is disjoint. Since $s$ is injective on $\Omega \setminus N(s)$, the desired formula follows.

(3) Again using $t$ being surjective, we have $|N(s)| \leq |N(st)|$. So (1) implies that 
\[|N(s)|, |N(t)| \leq |N(st)| \leq |N(s)| + |N(t)|.\]
If either $N(s)$ or $N(t)$ is infinite, then $|N(s)| + |N(t)| = \max\{|N(s)|,|N(t)|\}$. So the claim follows from the Cantor-Bernstein theorem.

(4) For any $\alpha \in \Omega$, we have 
\begin{equation}\label{eq-1} \tag{\ddag}
(st)^{-1}(\alpha) = \bigcup_{\beta \in s^{-1}(\alpha)} t^{-1}(\beta),
\end{equation} 
and so 
\[m(st) =  \mathrm{sup}\bigg\{\bigg|\bigcup_{\beta \in s^{-1}(\alpha)} t^{-1}(\beta)\bigg| \mid \alpha \in \Omega\bigg\} \leq m(s)\cdot m(t).\]
Since $s$ and $t$ are surjective, and hence $t^{-1}(\beta) \neq \emptyset \neq s^{-1}(\alpha)$ for all $\alpha, \beta \in \Omega$, we also have
\[m(s), m(t) \leq  \mathrm{sup}\bigg\{\bigg|\bigcup_{\beta \in s^{-1}(\alpha)}t^{-1}(\beta)\bigg| \mid \alpha \in \Omega\bigg\} = m(st).\]

(5) If either $m(s)$ or $m(t)$ is infinite, then $m(s) \cdot m(t) = \max\{m(s),m(t)\}$. So the desired conclusion follows from (4) and the Cantor-Bernstein theorem.

(6) For any $\alpha \in \Omega$, we have $|t^{-1}(\alpha)| \leq |(st)^{-1}(s(\alpha))|$. So if $|t^{-1}(\alpha)| = m(st)$ for some $\alpha \in \Omega$, then $|(st)^{-1}(s(\alpha))| = m(st)$. Next, for any $\alpha \in \Omega$, we have $|s^{-1}(\alpha)| \leq |(st)^{-1}(\alpha)|$, since $t$ is surjective. Hence, if $|s^{-1}(\alpha)| = m(st)$ for some $\alpha \in \Omega$, then $|(st)^{-1}(\alpha)| = m(st)$. 

Now suppose that $|(st)^{-1}(\alpha)| = m(st)$ for some $\alpha \in \Omega$. If $m(st)$ is regular, then either $|s^{-1}(\alpha)| = m(st)$, or $|t^{-1}(\beta)| = m(st)$ for some $\beta \in s^{-1}(\alpha)$, by~(\ref{eq-1}) and (4).
\end{proof}

The next result gives a partial description of $\sim_s$ in $\SR(\Omega)$.

\begin{theorem} \label{surj-sym}
Let $\, \Omega$ be a countably infinite set, and $s,t \in \SR(\Omega)$. Write $s \approx t$ if any of the following conditions holds.
\begin{enumerate}
\item[$(1)$] $|N(s)|,|N(t)|<\aleph_0$, and $\, |N(s)|-|C(s)| = |N(t)|-|C(t)|$.
\item[$(2)$] $|N(s)| = |N(t)| = \aleph_0$, and $m(s),m(t) < \aleph_0$.
\item[$(3)$] $m(s) = m(t) = \aleph_0$, but $s$ and $t$ do not achieve $m(s) = m(t)$.
\item[$(4)$] $m(s) = m(t) = \aleph_0$, and $s$ and $t$ achieve $m(s) = m(t)$.
\end{enumerate}
Then $\, \approx$ is a congruence, and $\sim_s \, \subseteq \, \approx$.
\end{theorem}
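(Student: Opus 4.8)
The plan is to deduce $\sim_s\,\subseteq\,\approx$ from Theorem~\ref{least-comm}: since $\sim_s$ is the least congruence with commutative quotient, it suffices to show that $\approx$ is a congruence on $\SR(\Omega)$ and that $\SR(\Omega)/\approx$ is commutative. First I would record that the four conditions partition $\SR(\Omega)$ into four mutually exclusive, exhaustive regimes, determined by three features of $s$: whether $N(s)$ is finite, whether $m(s)$ is finite, and (when $m(s)=\aleph_0$) whether $s$ achieves $m(s)$. Indeed, if $N(s)$ is finite then $N(s)=\bigcup_{\alpha\in C(s)}s^{-1}(\alpha)$ forces every fibre to be finite, so $m(s)<\aleph_0$ and $s$ is in regime (1); if $N(s)$ is infinite but $m(s)<\aleph_0$ we are in (2); and $m(s)=\aleph_0$ gives (3) or (4). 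Since each $s$ lies in exactly one regime, $\approx$ is reflexive, symmetric, and transitive, hence an equivalence relation, and $s\approx t$ holds precisely when $s,t$ share a regime and, in regime (1), satisfy $|N(s)|-|C(s)|=|N(t)|-|C(t)|$. I would encode this by a type $\tau(s)$ consisting of the regime of $s$ together with the integer $|N(s)|-|C(s)|$ in regime (1) (a nonnegative integer, since $|N(s)|=\sum_{\alpha\in C(s)}|s^{-1}(\alpha)|\ge 2|C(s)|$), so that $s\approx t$ iff $\tau(s)=\tau(t)$.

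The crux is then to show that $\tau(st)$ depends only on $\tau(s)$ and $\tau(t)$, and symmetrically so; this makes $\approx$ a congruence with commutative quotient. All the needed computations come from Lemma~\ref{surj-sym-tech-lem1}. When $s,t$ are both in regime (1), parts (1) and (2) of that lemma give $|N(st)|,|C(st)|<\aleph_0$, so $st$ is again in regime (1), and a finite-cardinal calculation using $|N(s)\setminus C(t)|=|N(s)|-|N(s)\cap C(t)|$ and $|C(t)\setminus N(s)|=|C(t)|-|N(s)\cap C(t)|$ collapses to
\[|N(st)|-|C(st)| = \big(|N(s)|-|C(s)|\big)+\big(|N(t)|-|C(t)|\big);\]
so the regime-(1) invariant is additive. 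For the other combinations, Lemma~\ref{surj-sym-tech-lem1}(3) shows $|N(st)|=\aleph_0$ as soon as one factor has infinite $N$, and parts (4)--(5) show $m(st)$ is finite exactly when both $m(s),m(t)$ are finite and is $\aleph_0$ as soon as one of them is. This already pins down the regime of $st$ in every case except the split between (3) and (4).

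The hard part will be exactly that last split — deciding whether $st$ achieves $m(st)=\aleph_0$ — and this is where Lemma~\ref{surj-sym-tech-lem1}(6) and the regularity of $\aleph_0$ (available because $\Omega$ is countable, so every infinite cardinal occurring is $\aleph_0$) are essential. Whenever $m(st)=\aleph_0$, part (6) gives that $st$ achieves it iff $s$ or $t$ does: the forward implication is unconditional, and the converse uses that $\aleph_0$ is regular. Since a factor has a fibre of size $\aleph_0$ iff it lies in regime (4), it follows that $st$ is in regime (4) iff at least one of $s,t$ is, and in regime (3) otherwise.

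Assembling the cases then yields the full table: a product of two regime-(1) elements with invariants $n,k$ is regime (1) with invariant $n+k$; a regime-(1) element times a regime-$(2)$, $(3)$, or $(4)$ element (in either order) is regime $(2)$, $(3)$, or $(4)$ respectively; two regime-$(2)$ elements give regime $(2)$; a regime-$(2)$ times a regime-$(3)$ or $(4)$ element gives regime $(3)$ or $(4)$; and a product of elements drawn from regimes $(3),(4)$ is regime $(4)$ if at least one factor is regime $(4)$, and regime $(3)$ otherwise. Every entry is symmetric, and $\tau(st)$ depends only on $\tau(s),\tau(t)$; since $\SR(\Omega)$ is associative, this makes $\approx$ a congruence and shows $\tau(st)=\tau(ts)$, so $\SR(\Omega)/\approx$ is commutative. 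Theorem~\ref{least-comm} then yields $\sim_s\,\subseteq\,\approx$.
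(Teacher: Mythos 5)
Your proof is correct and follows essentially the same route as the paper: both arguments use Lemma~\ref{surj-sym-tech-lem1} to show that the $\approx$-class of $st$ depends only (and symmetrically) on the classes of $s$ and $t$, and then invoke Theorem~\ref{least-comm}. The only cosmetic difference is packaging: the paper realizes your type function $\tau$ as an explicit homomorphism $f:\SR(\Omega)\to \N\cup\{\infty_1,\infty_2,\infty_3\}$ onto a concrete commutative semigroup and checks $f(st)=f(s)+f(t)$, which is exactly your multiplication table on regimes.
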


\begin{proof}
Let $S = \N \cup \{\infty_1, \infty_2, \infty_3\}$, and extend in a commutative fashion the addition from the semigroup $(\N,+)$ to $S$, by letting $s + \infty_i = \infty_i$ for all $s \in \N$ and $i \in \{1,2,3\}$, and letting $\infty_i + \infty_j = \infty_{\max\{i,j\}}$ for all $i \in \{1,2,3\}$. With this operation, $S$ is clearly a commutative semigroup. Define $f: \SR(\Omega) \to S$ by
\[f(s) = \left\{ \begin{array}{cl}
|N(s)| - |C(s)| & \text{if } |N(s)|<\aleph_0\\
\infty_1 & \text{if } |N(s)| = \aleph_0 \text{ and } m(s) < \aleph_0\\
\infty_2 & \text{if } m(s) = \aleph_0 \text{ and } s \text{ does not achieve } m(s)\\
\infty_3 & \text{if } m(s) = \aleph_0 \text{ and } s \text{ achieves } m(s)\\
\end{array}\right..\] 
We shall show that $f$ is a semigroup homomorphism. Since $\approx$ is clearly the kernel of $f$, it follows from this that $\approx$ is a congruence (see, e.g., \cite[Theorem 1.5.2]{H}). Since $S$ commutative, Theorem~\ref{least-comm} then implies that $\sim_s \, \subseteq \, \approx$.

Let $s,t \in S$. If $|N(s)|,|N(t)|<\aleph_0$, then, by Lemma~\ref{surj-sym-tech-lem1}(1,2),
\[|N(st)| - |C(st)| = |N(t)| + |N(s)\setminus C(t)| - |C(s)| - |C(t) \setminus N(s)|\]
\[= |N(t)| + |N(s)| - |N(s) \cap C(t)| - |C(s)| - |C(t)| + |C(t) \cap N(s)|\] 
\[= |N(s)| - |C(s)| + |N(t)| - |C(t)|,\] 
and so 
\[f(st) = |N(st)| - |C(st)| = |N(s)| - |C(s)| + |N(t)| - |C(t)| = f(s)+f(t).\]

Next suppose that $|N(t)| <\aleph_0$ but $|N(s)| = \aleph_0$. Then, by Lemma~\ref{surj-sym-tech-lem1}(3), $|N(st)| = |N(s)|$, by Lemma~\ref{surj-sym-tech-lem1}(4), $m(st) = \aleph_0$ if and only if $m(s) = \aleph_0$, and, by Lemma~\ref{surj-sym-tech-lem1}(6), in case $m(st) = \aleph_0$, $st$ has an infinite preimage if and only if $s$ does (since $\aleph_0$ is regular). Writing $f(s) = \infty_i$ for some $i \in \{1,2,3\}$, it follows that 
\[f(st) = \infty_i = \infty_i + f(t) = f(s)+f(t).\]
Very similar considerations show that if $|N(t)| = \aleph_0$ but $|N(s)| < \aleph_0$, then $f(st) = f(s)+f(t)$.

We may therefore assume that $|N(s)| = \aleph_0 = |N(t)|$. Then, by Lemma~\ref{surj-sym-tech-lem1}(3), $|N(st)| = \aleph_0$, by Lemma~\ref{surj-sym-tech-lem1}(4), $m(st) = \aleph_0$ if and only if either $m(s) = \aleph_0$ or $m(t) = \aleph_0$, and, by Lemma~\ref{surj-sym-tech-lem1}(6), in case $m(st) = \aleph_0$, $st$ has an infinite preimage if and only if either $s$ or $t$ does (since $\aleph_0$ is regular). Writing $f(s) = \infty_i$ and $f(t) = \infty_j$ for some $i,j \in \{1,2,3\}$, it follows that 
\[f(st) = \infty_{\max\{i,j\}} = f(s) + f(t).\]
Hence $f$ is a semigroup homomorphism, as claimed.
\end{proof}

We note that the proof above does not really rely on $\Omega$ being countable--just on $|\Omega|$ being regular. Rather this assumption was imposed, since otherwise there would clearly be more possibilities for the values of $|N(s)|$, $|N(t)|$, $m(s)$, and $m(t)$. To extend the theorem to $\Omega$ of arbitrary cardinality in a nontrivial way, one would need to consider not only all possible infinite values of $m(s) \leq |N(s)|$ and $m(t) \leq |N(t)|$ that are $\leq |\Omega|$, but also quantify the prevalence of preimages of $s$ and $t$ of various infinite cardinalities.

\section*{Appendix: Traces on Semigroup Rings}

As we have discussed, $\sim_s$ has a special relationship with semigroup homomorphisms--it describes precisely what must be related by a homomorphism, for the image to be commutative and as large as possible (Theorem~\ref{least-comm}). It turns out that $\sim_p$ has a similar relationship with certain trace functions on semigroup rings, which we briefly discuss next. 

From now on we assume all rings to be unital. The following definition is taken from~\cite{MV}.

\begin{definition} \label{trace-def}
Let $R$ and $T$ be rings, and let $f: R \to T$ be an additive function $\, ($i.e, $f(s+t)=f(s)+f(t)$ for all $s,t \in R$$)$. 

If $R$ and $T$ are $C$-algebras, for some commutative ring $C$, then we say that $f$ is $C$-\emph{linear} in case $f(rs)=rf(s)$ for all $s \in R$ and $r \in C$.

We say that $f$ is a $T$-\emph{valued trace on} $R$ if $f(st)=f(ts)$ for all $s,t \in R$. If $f$ is a trace on $R$, then we say that $f$ is \emph{minimal} if $f(s)=0$ implies that $s$ is a sum of additive commutators, for all $s \in R$.
\end{definition}

\begin{lemma} \label{tr-lem}
Let $R$ and $T$ be rings, and $f : R \to T$ a trace. For all $s,t \in R$, if $s \sim_p t$, then $f(s) = f(t)$.
\end{lemma}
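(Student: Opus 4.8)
The plan is to reduce immediately to the single-step relation $\sim_p^1$ and then chain. Since we are working in the appendix, $R$ is assumed unital, so $R^1 = R$; hence any witnesses $p,r$ to a relation $s \sim_p^1 t$ already lie in $R$, and the trace identity $f(xy)=f(yx)$ applies to them directly, without any concern about a formally adjoined identity.

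First I would establish the base case. Suppose $s \sim_p^1 t$, so that $s = pr$ and $t = rp$ for some $p,r \in R$. The defining property of a trace then gives
\[
f(s) = f(pr) = f(rp) = f(t)
\]
at once. I would remark that only the trace identity is used here; the additivity of $f$ plays no role in this particular lemma, and so the hypothesis that $f$ be additive is not needed for the conclusion.

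Next I would pass to the transitive closure. If $s \sim_p t$, then by Definition~\ref{sim-def1} there exist $q_1, \dots, q_n \in R$ with
\[
s = q_1 \sim_p^1 q_2 \sim_p^1 \cdots \sim_p^1 q_n = t.
\]
Applying the base case to each elementary transition yields $f(q_i) = f(q_{i+1})$ for every $i$, and transitivity of equality gives $f(s) = f(t)$, as desired.

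There is no substantive obstacle in this argument: the only point meriting attention is the unital hypothesis, which ensures that the elements realizing each $\sim_p^1$-step are genuine elements of $R$, so that the trace property $f(pr)=f(rp)$ is available verbatim. The lemma is essentially the observation that any trace is constant on $\sim_p^1$-classes, and hence, being a function, constant on the generated equivalence $\sim_p$.
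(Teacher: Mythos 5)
Your proof is correct, but it takes a genuinely different route from the paper's. The paper does not chain along $\sim_p^1$-steps at all: it invokes a ring-theoretic result of Alghazzawi and Leroy (\cite[Theorem 3.15(2)]{AlL}) asserting that $s \sim_p t$ implies $s-t$ is a sum of additive commutators in $R$, and then uses the additivity of $f$ together with the trace identity to get $f(s-t)=0$, hence $f(s)=f(t)$. Your argument is more elementary and self-contained: the base case $s \sim_p^1 t$ gives $f(s)=f(pr)=f(rp)=f(t)$ directly, and transitivity of equality finishes; your observation that $R^1=R$ for unital $R$ correctly disposes of the adjoined-identity issue (and even without unitality, a witness equal to a formal $1$ forces $s=t$, so nothing breaks). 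Your route also buys a slightly stronger statement, as you note: additivity of $f$ is never used, so the conclusion holds for any function satisfying $f(xy)=f(yx)$. What the paper's route buys is coherence with the rest of the appendix: minimality of a trace (Definition~\ref{trace-def}) is phrased in terms of sums of additive commutators, and passing through that same notion makes the connection between $\sim_p$ and commutators explicit, which is exactly what is exploited in the proof of Proposition~\ref{tr-prim}(2).
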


\begin{proof}
Let $s,t \in R$, and suppose that $s \sim_p t$. Then, according to~\cite[Theorem 3.15(2)]{AlL}, $s-t$ is a sum of additive commutators in $R$. Since $f$ is an (additive) trace, it follows that $f(s-t) = 0$, and hence $f(s) = f(t)$.
\end{proof}

Let $R$ be a ring, and $S$ a semigroup with zero. We denote by $RS$ the corresponding semigroup ring, and by $\overline{RS}$ the resulting \emph{contracted semigroup ring}, where the zero of $S$ is identified with the zero of $RS$. That is, $\overline{RS} = RS/I$, where $I = \{x\cdot 0_S \in RS \mid x \in R\}$ is the ideal of $RS$ generated by the zero $0_S$ of $S$. An arbitrary element of $\overline{RS}$ can be represented as $\sum_{s \in S\setminus \{0\}} a_ss$, where $a_s \in R$, and all but finitely many of the $a_s$ are zero. 

The second statement in the next proposition effectively says that $\sim_p$ relates exactly the elements of a semigroup $S$ that must be identified by every linear trace on $\overline{RS}$. 

\begin{proposition} \label{tr-prim}
Let $R$ be a commutative ring, $T$ an $R$-algebra, $S$ a semigroup with zero, and $f : \overline{RS} \to T$ an $R$-linear function.
\begin{enumerate}
\item[$(1)$] The map $f$ is a trace if and only if $s \sim_p t$ implies that $f(s) = f(t)$ for all $s,t \in S$.
\item[$(2)$] Suppose that $f$ is a minimal trace. Then $s \sim_p t$ if and only if $f(s) = f(t)$, for all $s,t \in S$.
\end{enumerate}
\end{proposition}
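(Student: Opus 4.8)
The plan is to reduce the trace condition on the ring $\overline{RS}$ to the conjugacy relation $\sim_p$ on the basis $S$, exploiting that $\overline{RS}$ is the free $R$-module on $S\setminus\{0\}$ and that $R$ is central. For the forward implication of~(1), if $f$ is a trace then applying it to two basis elements $p,r\in S$ gives $f(pr)=f(rp)$, where $pr$ and $rp$ are the images in $\overline{RS}$ of the semigroup products (possibly the ring zero). Since a primary conjugacy $s\sim_p^1 t$ with $s\neq t$ is witnessed by $p,r\in S$, this yields $f(s)=f(t)$, and transitivity of $\sim_p$ extends it to all of $\sim_p$. Equivalently, one may invoke Lemma~\ref{tr-lem} after observing that $\sim_p$ in $S$ lifts to $\sim_p$ in the ring $\overline{RS}$. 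For the converse of~(1), I would write arbitrary elements $x=\sum a_s s$ and $y=\sum b_t t$; since $R$ is commutative and $f$ is $R$-linear and additive, $f(xy)=\sum a_s b_t\, f(st)$ and $f(yx)=\sum a_s b_t\, f(ts)$, so it suffices to know $f(st)=f(ts)$ for each pair. This follows from the hypothesis because $st\sim_p^1 ts$ always holds (take $p=s$, $r=t$). The degenerate case where a product equals $0_S$ is handled automatically, since then $f$ of the vanishing product is $0$ and the conjugate product lies in the $\sim_p$-class of $0$.

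For part~(2), the forward direction is exactly part~(1). The new content is the converse: assuming $f(s)=f(t)$, additivity gives $f(s-t)=0$, so minimality makes $s-t$ a sum of additive commutators in $\overline{RS}$. The key step is to show that sums of additive commutators cannot relate distinct $\sim_p$-classes. To this end I would introduce the $R$-module $M$ that is free on the set of $\sim_p$-classes of $S$ other than the class of $0_S$, together with the $R$-linear map $\phi\colon\overline{RS}\to M$ sending each basis element $s$ to its class $[s]$ when $s\not\sim_p 0_S$ and to $0$ otherwise. Since every basic additive commutator has the form $s\cdot t-t\cdot s$ with $st\sim_p^1 ts$, both products lie in one common $\sim_p$-class (possibly the class of $0_S$), so $\phi$ annihilates each such commutator, and hence all of $\ker\phi$ contains the whole commutator subgroup. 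Thus $s-t\in\ker\phi$, i.e.\ $\phi(s)=\phi(t)$; comparing basis coefficients in $M$ forces either $[s]=[t]$ or $s\sim_p 0_S\sim_p t$, and in either case $s\sim_p t$, as desired.

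The hard part will be the converse of part~(2), specifically checking that $\phi$ is well defined and that it kills every additive commutator. This requires careful bookkeeping of the contracted-ring identifications, under which a semigroup product $st=0_S$ collapses to the ring zero, so that each commutator $s\cdot t-t\cdot s$ is correctly seen to lie within a single $\sim_p$-class (including the class of $0_S$ when one of the two products vanishes). Once $\phi$ is in hand and shown to vanish on commutators, the remaining implications are immediate.
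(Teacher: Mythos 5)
Your proposal is correct, and it diverges from the paper in an interesting way on part (2). Part (1) is essentially the paper's own argument: the converse is the same bilinearity reduction to basis elements (with the same handling of products that collapse to the contracted-ring zero), and the forward direction is either your direct computation with witnesses $p,r\in S$ or, as you note, an appeal to Lemma~\ref{tr-lem} after lifting $\sim_p$ from $S$ to $\overline{RS}$ --- the paper does the latter. For part (2), however, the paper simply cites \cite[Theorem 11(2)]{MV}, which asserts that a minimal trace sends elements of $S$ lying in distinct $\sim_p$-classes to $R$-linearly independent elements of $T$; combined with $f(s)=f(t)$ this immediately forces $s\sim_p t$. You instead re-derive the needed separation property from scratch: minimality turns $f(s)=f(t)$ into ``$s-t$ is a sum of additive commutators,'' and your map $\phi\colon\overline{RS}\to M$ into the free $R$-module on the nonzero $\sim_p$-classes kills every commutator $xy-yx$ (by $R$-bilinear expansion into basic commutators $st-ts$, where $st\sim_p^1 ts$ places both products in a common class, the zero class absorbing the degenerate $st=0_S$ cases), so $\phi(s)=\phi(t)$ and freeness of $M$ forces $s\sim_p t$. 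This $\phi$ is in essence the universal trace on $\overline{RS}$ that underlies the cited result of \cite{MV}, so you have inlined a proof of the black box the paper invokes: the paper's route is shorter, while yours is self-contained and makes visible exactly why minimality separates $\sim_p$-classes. Two small points: well-definedness of $\phi$ is automatic (not ``the hard part''), since $\overline{RS}$ is by construction free as an $R$-module on $S\setminus\{0_S\}$; and the final basis-coefficient comparison needs $R\neq 0$ so that $[s]-[t]\neq 0$ for distinct classes --- a degenerate case that the paper's statement implicitly excludes as well.
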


\begin{proof}
(1) Suppose that $f$ is a trace. Then $s \sim_p t$ implies that $f(s) = f(t)$ for all $s,t \in S$, by Lemma~\ref{tr-lem}. For the converse, suppose that $s \sim_p t$ implies that $f(s) = f(t)$ for all $s,t \in S$. Then, in particular, $f(st) = f(ts)$ for all $s,t \in S$. It follows that $f(pr) = f(rp)$ for all $p,r \in \overline{RS}$, since $f$ is $R$-linear. Therefore $f$ is a trace.

(2) It is shown in~\cite[Theorem 11(2)]{MV} that if $f : \overline{RS} \to T$ is a minimal trace, then $f$ takes elements of $S$ from different $\sim_p$-equivalence classes to $R$-linearly independent elements of $T$. Thus if $f(s) = f(t)$ for some $s,t \in S$, then $s \sim_p t$. The converse follows from (1).
\end{proof}

Recall that if $R$ is a commutative ring and $n \in \Z^+$, then the ring $\M_n(R)$ of $n\times n$ matrices over $R$ is isomorphic to the contracted semigroup ring $\overline{RS}$, where 
\[S=\{e_{ij} \mid 1\leq i,j \leq n\} \cup \{0\},\] 
$e_{ij}$ are the matrix units, and multiplication is given by
\[e_{ij}\cdot e_{kl} = 
\left\{ \begin{array}{ll}
e_{il} & \text{if } j = k\\
0 & \text{if } j \neq k
\end{array}\right..\] 
Since the usual trace on $\M_n(R)$ is minimal (see, e.g., \cite[Corollary 14]{MV}), $\sim_p$ agrees with it on matrix units, by the previous proposition. It is not hard to see that $\sim_w$ does as well, but that the other relations in Definitions~\ref{sim-def1}, \ref{symm-conj-def}, and~\ref{sim-def2} do not, provided that $n \geq 2$. 

The relations $\sim_p$ and $\sim_p^1$ on matrix rings are explored in greater detail in~\cite{AbL,AlL}.

\section*{Acknowledgements}

I am grateful to the referee for a very thoughtful review and valuable suggestions, and to Gene Abrams for a helpful discussion of this material.

\medskip

\noindent Department of Mathematics, University of Colorado, Colorado Springs, CO, 80918, USA 

\noindent \emph{Email:} \href{mailto:zmesyan@uccs.edu}{zmesyan@uccs.edu}

\end{document}